\newtheorem{prop}{Proposition}[section]
\newtheorem{theorem}[prop]{Theorem}
\newtheorem{cor}[prop]{Corollary}
\newtheorem{lemma}[prop]{Lemma}
\theoremstyle{definition}
\newtheorem{defn}[prop]{Definition}
\newtheorem{void}[prop]{}
\def\vp{\varphi}
\def\A{\mathcal{A}}
\def\B{\mathcal{B}}
\def\C{\mathcal{C}}
\def\D{\mathcal{D}}
\def\F{\mathcal{F}}
\def\H{\mathcal{H}}
\def\O{\ensuremath{\mathcal{O}}}
\def\U{\ensuremath{\mathcal{U}}}
\def\PP{\ensuremath{\mathbb{P}}}
\def\RR{\ensuremath{\mathbb{R}}}
\def\spaces{\textbf{Spaces}}
\def\catsq{\mathfrak{S}}
\def\cgh{\mathcal{CGH}}
\def\cgwh{\mathcal{CGWH}}
\def\colim{\operatorname{colim}}
\def\conn{\operatorname{conn}}
\def\ev{\operatorname{ev}}
\def\Fib{\operatorname{Fib}}
\def\fib{\operatorname{fib}}
\def\hofib{\operatorname{hoFib}}
\def\holim{\operatorname{holim}}
\def\id{\operatorname{id}}
\def\incl{\operatorname{incl}}
\def\Ind{\operatorname{Ind}}
\def\Irr{\operatorname{Irr}}
\def\Iso{\operatorname{Iso}}
\def\map{\operatorname{map}}
\def\proj{\ensuremath{\mathrm{proj}}}
\def\res{\operatorname{res}}
\def\susp{\operatorname{susp}}
\def\defeq{\overset{\text{def}}{=}}
\newcommand{\ul}[1]{\underline{#1}}
\newcommand{\xto}[1]{\xrightarrow{#1}}
\def\Ball{\operatorname{Ball}}
\def\half{\frac{1}{2}}
\def\inv{\operatorname{inv}}
\def\pinch{\operatorname{pinch}}
\def\red{\operatorname{red}}
\title[Self equivalences of equivariant spheres]{Stabilization of the homotopy groups of the self equivalences of equivariant spheres}
\author{Assaf Libman}
\address{Institute of Mathematics, University of Aberdeen, Fraser Noble Building, Aberdeen AB24 3UE, U.K.}
\email{a.libman@abdn.ac.uk}
\subjclass[2010]{55P91, 55Q52, 55P42}
\keywords{Equivariant homotopy theory, homotopy groups, homotopy stabilization}
\numberwithin{equation}{section}
\begin{document}

\begin{abstract}
Let $U_1,U_2,\dots$ be a sequence of orthogonal representations of a finite group $G$ such that every irreducible summand of $\oplus_{n} U_n$ has infinite multiplicity.
Let $V_n=\oplus_{i=1}^n U_n$ and $S(V_n)$ denote the sphere of unit vectors.
Then for any $i \geq 0$ the sequence of group $\dots \to \pi_i \map^G(S(V_n),S(V_n)) \to \pi_i \map^G(S(V_{n+1}),S(V_{n+1})) \to \dots$ stabilizes.
The stable group is a direct sum of $\omega_i(B N_GH/H)$ for a certain collection of subgroups $H$. 

\end{abstract}

\maketitle

\section{Introduction}\label{S:Intro}
Let $G$ be a finite group.
A real representation $U$ of $G$ can be equipped with an essentially unique $G$-invariant norm.
The set $S(U)$ of unit vectors is called a {\em linear sphere} for $G$.
The one point compactification of $U$ is denoted $S^U$ with $\infty \in S^U$ as a basepoint.

This paper grew out of the interest in stabilization properties of the homotopy groups of the space $\map^G(S(U),S(U))$ of equivariant self maps.
To make this precise, let $U_1,U_2,\dots$ be a sequence of real representations of $G$.
Let $\Irr(U_\bullet)$ be the set of their irreducible summands.
Throughout we will assume: 
\begin{enumerate}[label={\bf(\Alph*)}]
\setcounter{enumi}{20}
\item
\label{H:U}
 Any $V \in \Irr(U_\bullet)$ has infinite multiplicity in $\oplus_{n\geq 1} U_n$.
\end{enumerate}

A map of \emph{unpointed} spaces $f \colon X \to Y$ is called a {\em $k$-equivalence} 
if it induces a bijection on path components, isomorphisms $\pi_i X \to \pi_i Y$ for all $1 \leq i \leq k$ and an epimorphism $\pi_{k+1}X \to \pi_{k+1}Y$ for any choice of basepoint in $X$.
Let $\omega_i(X)$ denote the stable homotopy groups of $X_+$ (the disjoint union of $X$ with a basepoint).
Let $BG$ denote the classifying space of a group $G$.
Let $(H)$ denote the conjugacy class of $H \leq G$ and $WH=N_G(H)/H$.
Let $\Iso_G(X)$ be the set of the isotropy groups of the points of a $G$-space $X$.

If $U,V$ are orthogonal representations of $G$ then $S(U\oplus V)$ is homeomorphic to the join $S(U)*S(V)$.
There results $\map^G(S(U),S(U)) \xto{f \mapsto f*\id_{S(V)}} \map^G(S(U\oplus V),S(U\oplus V))$.

\begin{theorem}\label{T:linear spheres}
Let $U_1, U_2,\dots$ be a sequence of real representations of a finite group $G$ which satisfies hypothesis \ref{H:U}.
Let $U_{\leq n}$ denote $\oplus_{i=1}^n U_i$.
Then for any $k \geq 0$ the maps
\[
\map^G(S(U_{\leq n}),S(U_{\leq n})) \xto{f \mapsto f*\id_{S(U_{n+1})}} \map^G(S(U_{\leq n+1}),S(U_{\leq n+1}))
\]
are $k$-equivalences for all sufficiently large $n$.
In addition, provided $n$ is sufficiently large, there are isomorphisms (bijection if $i=0$) for any $0 \leq i \leq k$ and any choice of basepoint
\[
\pi_i \map^G(S(U_{\leq n}),S(U_{\leq n})) \cong \bigoplus_{(H) \subseteq \F(U_\bullet) } \omega_i(BWH)
\]
where $\F(U_\bullet)$ is the smallest collection of subgroups of $G$ which contains $\cup_{V \in \Irr(U_\bullet)} \Iso_G(S(V))$ and is closed under intersection of groups, and the sum is over its conjugacy classes.
\end{theorem}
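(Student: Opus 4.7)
The target group $\bigoplus_{(H)\subseteq \F(U_\bullet)} \omega_i(BWH)$ is immediately recognizable as the tom Dieck splitting of an equivariant stable homotopy group, restricted to the conjugacy classes that actually appear in the orbit decomposition of $S(V_{\leq n})$. My overall plan is therefore: (i) identify $\F(U_\bullet)$ with the set of isotropy groups realized on $S(V_{\leq n})$ for $n$ large; (ii) prove a Freudenthal-type stabilization for a tower of mapping spaces obtained from a filtration of $S(V_{\leq n})$ by orbit type; and (iii) split the stabilized tower into summands indexed by $(H) \in \F(U_\bullet)$, each contributing $\omega_i(BWH)$, along the lines of the tom Dieck decomposition.

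For (i), the identities $(S(U)*S(V))^H = S(U^H) * S(V^H)$ together with the fact that the isotropy of an interior join point $(tu,(1-t)v)$ equals $G_u \cap G_v$ show that the isotropy groups realized on $S(V_{\leq n})$ are exactly the intersections of groups drawn from $\cup_{V \in \Irr(U_\bullet)} \Iso_G(S(V))$; by hypothesis \ref{H:U} every such intersection is realized once $n$ is sufficiently large. For (ii), I would filter $S(V_{\leq n})$ over the poset $\F(U_\bullet)$ by letting $X_{\geq H}$ consist of points whose isotropy contains a conjugate of $H$. The subquotients $X_{\geq H}/X_{>H}$ are built from $G/H$-free cells attached to the $(H)$-stratum, so applying $\map^G(-, S(V_{\leq n}))$ produces a tower of principal fibrations whose layers, via the adjunction $\map^G(G/H_+ \wedge A, Y) \simeq \map^{WH}(A, Y^H)$, are $WH$-equivariant mapping spaces into $S(V_{\leq n}^H)$. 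Since \ref{H:U} forces $\dim V_{\leq n}^H \to \infty$ for every $H \in \F(U_\bullet)$, these fixed-point spheres become arbitrarily highly connected, so a layer-by-layer Freudenthal estimate shows that $f \mapsto f * \id_{S(U_{n+1})}$ is a $k$-equivalence on the total space for all sufficiently large $n$. In the limit the $(H)$-stratum is a free $WH$-space of unbounded connectivity, i.e. a model for $EWH$; its stable homotopy contribution is therefore $\omega_i(EWH_+/WH) = \omega_i(BWH)$, which assembles to the stated direct sum.

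The main obstacle will be step (iii), splitting the stabilized tower into the advertised summands. Two routes seem feasible. The first is a direct obstruction-theoretic splitting once the layers are identified as infinite loop spaces in the stable range, requiring vanishing of the relevant Postnikov invariants. The second, which I would prefer, is to replace the self-map space by an equivariant James/May-type combinatorial model whose suspension spectrum is manifestly $\bigvee_{(H) \in \F(U_\bullet)} \Sigma^\infty BWH_+$, from which the splitting drops out automatically. In either case, the delicate point is making sure the index set is exactly $\F(U_\bullet)$ and nothing larger: this is what the infinite-multiplicity hypothesis \ref{H:U} ultimately buys, guaranteeing that each stratum for $H \in \F(U_\bullet)$ is genuinely $WH$-free and sufficiently large so that $BWH$, and not a truncation of it, appears in the stable limit.
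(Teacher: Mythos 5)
Your broad strategy -- filter by orbit type, reduce to $WH$-equivariant mapping spaces on the strata, feed in a Freudenthal estimate, and read off a tom Dieck-style splitting -- is indeed the shape of the paper's argument, and your step (i) is exactly Lemma~\ref{L:far in U}. But there are two genuine gaps.

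The first is in step (ii). You propose a ``layer-by-layer Freudenthal estimate'' as if the stabilization map $f \mapsto f * \id_Z$ were simply compatible with the orbit-type filtration. It is not, and this is where most of the paper's technical work goes. The filtration lives on the domain, but $J_Z$ changes the domain and the codomain simultaneously, and it does so by \emph{join}, not by a smash or suspension with a fixed sphere. Concretely, after reducing to $WH$-maps via Proposition~\ref{P:isom on fibres in F_q filration}, one must show that $J_Z$ induces $k$-equivalences on the fibres of the restriction $\map^{WH}((XY)^H,(XY)^H) \to \map^{WH}((X_{q-1}Y)^H,(XY)^H)$. There is no evident ladder of fibrations to which a na\"ive Freudenthal argument applies; instead one needs to compare two a priori different ways of writing the join $A*Y*Z$ (the maps $\alpha$, $\beta$ of Section~\ref{Sec:join}), show they are homotopic through an explicit homotopy respecting face conditions (Proposition~\ref{P:tilde theta}), and run the whole comparison inside the category $\catsq$ of square diagrams using the $\Lambda$-functor (Sections~\ref{Sec:squares} and~\ref{Sec:stabilization}). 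The Freudenthal input (your ``spheres become highly connected'') does appear, but only as hypothesis~\ref{P:stabilisation key to WH reduction hyp 2} of Proposition~\ref{P:stabilisation key to WH reduction}; without the homotopy-commutativity scaffolding around it the estimate does not propagate to the fibres you need.

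The second gap is in step (iii), and you flag it yourself. Neither of your suggested routes is carried out. The paper's actual mechanism is $V$-duality: after stabilizing $S^V \to QS^V$ and recognizing $\map^{WH}(A_+, QS^V)$ as $\Omega^\infty$ of a function spectrum, it invokes the Spanier--Whitehead $V$-dual to replace $F(X^H_+,\Sigma^\infty S^V)$ by $\Sigma^\infty C(S^V, S^V\setminus X^H)$, and \emph{then} applies the tom Dieck splitting of the $WH$-fixed-point spectrum of a suspension spectrum. The passage from ``free $WH$-action of unbounded codimension'' to ``contributes $\omega_i(BWH)$'' is precisely the Alexander-duality estimate in Lemma~\ref{L:mapping cone dimension shift} together with the vanishing in Lemma~\ref{L:vanishing homology and stable homotopy}; your claim that the stratum ``is a model for $EWH$'' is an informal shadow of this, not a proof. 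Your route 1 (obstruction theory) would require showing the relevant $k$-invariants vanish in the range, which is not automatic; your route 2 (a James/May combinatorial model for $\map^G(S(V),S(V))$) is not a standard construction and you do not explain how to build it. The duality route in the paper avoids both issues.

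So: the decomposition and the role of hypothesis~\ref{H:U} you have right, but the two load-bearing steps -- homotopy-compatibility of $J_Z$ with the orbit filtration, and the identification of the stable layers via $V$-duality -- are missing, and neither follows from what you have written.
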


Unreduced suspension gives a homeomorphism $\Sigma S(U) \cong S^U$.
Denote the fixed points $0,\infty \in S^U$ by $\alpha_0,\alpha_1$, the latter is the basepoint of $S^U$.
Let $\map^G(S^U,S^U;\id_{\{\alpha_0,\alpha_1\}})$ be the space of self maps which leave $\alpha_0$ and $\alpha_1$ fixed.

\begin{prop}\label{P:map S(U) to S^U}
With the set up and notation of Theorem \ref{T:linear spheres}, let $U=U_{\leq n}$ for sufficiently large $n$.
Then unreduced suspension gives a $k$-equivalence
\[
\map^G(S(U),S(U)) \xto{\susp \colon f \mapsto \Sigma f} \map^G(S^{U},S^{U};\id_{\{\alpha_0,\alpha_1\}}).
\]
\end{prop}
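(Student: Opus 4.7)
The plan is to convert the statement via the exponential adjunction into a claim about a certain ``vertical-meridian'' map $\iota\colon S(U)\to P$, to estimate its equivariant connectivity fixed-set by fixed-set using the Freudenthal suspension theorem, and to promote this pointwise estimate to a statement on $G$-mapping spaces via equivariant obstruction theory.

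Presenting $S^U$ as $S(U)\times[0,1]$ with $S(U)\times\{0\}$ and $S(U)\times\{1\}$ collapsed to $\alpha_0$ and $\alpha_1$ respectively, the exponential law supplies a $G$-homeomorphism
\[
\map^G(S^U,S^U;\id_{\{\alpha_0,\alpha_1\}}) \;\xto{\cong}\; \map^G(S(U),P),
\]
where $P=\Path_{\alpha_0,\alpha_1}(S^U)$ carries the pointwise $G$-action (well defined because $\alpha_0,\alpha_1$ are $G$-fixed). Under this identification, $\susp$ corresponds to post-composition with the map $\iota\colon S(U)\to P$, $x\mapsto(t\mapsto[x,t])$. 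It therefore suffices to show that $\iota_*\colon \map^G(S(U),S(U))\to \map^G(S(U),P)$ is a $k$-equivalence.

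For each subgroup $H\leq G$ one has $(S^U)^H = S^{U^H}$, $S(U)^H = S(U^H)$, and $P^H = \Path_{\alpha_0,\alpha_1}(S^{U^H})$. When $U^H=0$ both sides are empty; otherwise $P^H\simeq \Omega S^{U^H}$ by concatenation with a fixed path, and $\iota^H$ is identified up to homotopy with the Freudenthal adjoint $S(U^H)\to\Omega\Sigma S(U^H)$, which is a $(2\dim U^H-3)$-equivalence by Freudenthal. By hypothesis \ref{H:U}, $\dim(U_{\leq n})^H\to\infty$ with $n$ whenever $U^H\neq 0$. Since any $G$-CW structure on $S(U)$ has cells of orbit type $G/H$ only in dimensions $d\leq \dim S(U)^H=\dim U^H-1$, a skeleton-by-skeleton induction using the fibre sequences $\Omega^d Y^H \to \map^G(S(U)^{(d)},Y)\to\map^G(S(U)^{(d-1)},Y)$ coming from the cell pushouts $G/H\times S^{d-1}\hookrightarrow G/H\times D^d$ shows that $\iota_*$ is a $(\min_{H:\,U^H\neq 0}(\dim U^H-2))$-equivalence, which exceeds $k$ once $n$ is large enough.

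The delicate point is the last cellular bookkeeping: the naive estimate ``$\iota$ is $m$-connected, hence $\iota_*$ is $(m-\dim S(U))$-connected'' is useless because $\dim S(U)$ grows with $n$ at the same rate as $m$. What rescues the argument is that for each $H$ both the connectivity of $\iota^H$ and the maximal dimension of cells of orbit type $G/H$ in $S(U)$ are linear in $\dim U^H$, but the former grows twice as fast; matching the two orbit-type by orbit-type produces a lower bound that goes to infinity with $n$.
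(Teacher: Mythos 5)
Your proposal is correct and takes essentially the same route as the paper: the paper also reduces via the adjunction $F(\Sigma X,\Sigma X;\alpha_0,\alpha_1)\cong F(X,\PP_{\alpha_0,\alpha_1}\Sigma X)$ to the unit map $\eta_{S(U)}$, applies Freudenthal on each $H$-fixed sphere, and runs the orbit-type cellular induction (packaged there as Lemma \ref{L:k equivalence on fibres}); the only slip is a harmless off-by-one in your Freudenthal connectivity estimate, which the paper records as $(2\dim S(U)^H-2)=(2\dim U^H-4)$ in its "$k$-equivalence" convention, yielding the threshold $\dim S(U)^H\geq k+2$ rather than $\dim U^H-2\geq k$.
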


If $U,V$ are representations, there is a homeomorphism $S^U \wedge S^V \xto{u \wedge v \mapsto u+v} S^{U+V}$.

\begin{theorem}\label{T:tom Dieck extended}
With the set up and notation of Theorem \ref{T:linear spheres}, for every sufficiently large $n$, the map (between the {\em pointed} mapping spaces)
\[
\map^G_*(S^{U_{\leq n}},S^{U_{\leq n}}) \xto{f \mapsto f \wedge S^{U_{n+1}}} \map^G_*(S^{U_{\leq n+1}},S^{U_{\leq n+1}})
\]
is a $k$-equivalence.
If $Irr(U_\bullet)$ contains the trivial representation then for all $i \geq 0$ there are isomorphisms (bijection if $i=0$)
\[
\colim_n \pi_i \, \map^G_*(S^{U_{\leq n}},S^{U_{\leq n}}) \cong \underset{{(H) \subseteq \F(U_\bullet)}}{\oplus} \omega_i(BWH).
\]
If $Irr(U_\bullet)$ does not contain the trivial representation then 
\[
\colim_n \pi_i \, \map^G_*(S^{U_{\leq n}},S^{U_{\leq n}}) \cong \underset{{(H) \subseteq \F(U_\bullet)}}{\oplus} \omega_i(BWH \coprod BWH).
\]
\end{theorem}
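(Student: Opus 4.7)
My plan is to reduce Theorem~\ref{T:tom Dieck extended} to Theorem~\ref{T:linear spheres} via Proposition~\ref{P:map S(U) to S^U} and the fibration
\[
\map^G(S^U, S^U; \id_{\{\alpha_0,\alpha_1\}}) \hookrightarrow \map^G_*(S^U, S^U) \xrightarrow{\ev_{\alpha_0}} (S^U)^G,
\]
with $U = U_{\leq n}$, whose fiber over $\alpha_0$ is exactly the space appearing in Proposition~\ref{P:map S(U) to S^U}. The stabilization map $f \mapsto f \wedge \id_{S^{U_{n+1}}}$ is compatible with the join-stabilization of Theorem~\ref{T:linear spheres} through the commutative square provided by unreduced suspension; establishing the $k$-equivalence of the former will reduce to controlling the inclusion of the fiber into the total space in the fibration above. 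The argument will then split according to whether the trivial representation lies in $\Irr(U_\bullet)$, which is equivalent to whether $U^G \neq 0$ for large $n$.

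If the trivial representation is present, hypothesis~\ref{H:U} gives $\dim U^G \to \infty$, so $(S^U)^G = S^{U^G}$ is $k$-connected and the long exact sequence of the fibration yields an isomorphism $\pi_i \map^G(S^U, S^U; \id_{\{\alpha_0,\alpha_1\}}) \cong \pi_i \map^G_*(S^U, S^U)$ for $0 \leq i \leq k$. Combining with Proposition~\ref{P:map S(U) to S^U} and Theorem~\ref{T:linear spheres} produces the required $k$-equivalence of the stabilization and identifies the colimit with $\bigoplus_{(H)} \omega_i(BWH)$.

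If the trivial representation is absent, then $(S^U)^G = S^0 = \{\alpha_0,\alpha_1\}$ for every $n$, and the fibration yields a splitting $\map^G_*(S^U, S^U) = X_{\alpha_0} \sqcup X_{\alpha_1}$ into fibers of $\ev_{\alpha_0}$, respected by the stabilization map (one checks that $f(\alpha_0) = \alpha_j$ forces $(f \wedge \id)(\alpha_0) = \alpha_j$). Here $X_{\alpha_0} = \map^G(S^U, S^U; \id_{\{\alpha_0,\alpha_1\}})$ is handled exactly as in the previous case and contributes $\bigoplus_{(H)} \omega_i(BWH)$. The other component $X_{\alpha_1} = \map^G_*(S^U/S^0, S^U)$ consists of pointed maps that send both $G$-fixed points to the basepoint. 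I would analyze it through a tom-Dieck-style decomposition into $H$-contributions: for each $(H) \in \F(U_\bullet)$ (necessarily $H \neq G$ here) one has $U^H \neq 0$, and $(S^U/S^0)^H = S^{U^H}/S^0$, a sphere with two fixed points identified; the extra $S^1$ summand that appears contributes only through a factor $\Omega S^{U^H}$ which is $(\dim U^H - 2)$-connected, hence trivial in the stable range as $\dim U^H \to \infty$. This leaves the same $\omega_i(BWH)$-contribution per $H$ as for $X_{\alpha_0}$, and summing gives
\[
\pi_i \map^G_*(S^U, S^U) = \pi_i X_{\alpha_0} \oplus \pi_i X_{\alpha_1} \cong \bigoplus_{(H)} \omega_i(BWH \sqcup BWH),
\]
using that $\omega_i(Y \sqcup Y) = \omega_i(Y) \oplus \omega_i(Y)$.

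The main obstacle will be the careful adaptation of the arguments underlying Theorem~\ref{T:linear spheres} to the mapping space $X_{\alpha_1} = \map^G_*(S^U/S^0, S^U)$: one must establish the $k$-equivalence of the stabilization $\wedge\id_{S^{U_{n+1}}}$ restricted to $X_{\alpha_1}$, and verify that the tom-Dieck-type decomposition yields the same $\omega_i(BWH)$-summands despite the singular fixed-point behavior of $S^U/S^0$ (in particular, that $(S^U/S^0)^H$ is contractible exactly when $U^H = 0$, so that fixed sets with $H = G$ contribute nothing and the summation is restricted to proper $(H) \in \F(U_\bullet)$).
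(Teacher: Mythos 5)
Your overall architecture matches the paper's: you pass to the fibration
\[
\map^G(S^U,S^U;\id_{\{\alpha_0,\alpha_1\}}) \hookrightarrow \map^G_*(S^U,S^U) \xrightarrow{\ev_{\alpha_0}} (S^U)^G,
\]
split on whether the trivial representation lies in $\Irr(U_\bullet)$, and handle the first case (trivial representation present) and the component $X_{\alpha_0}$ of the second case exactly as the paper does, feeding Proposition~\ref{P:map S(U) to S^U} and Theorem~\ref{T:linear spheres} through the commuting square relating $J_{S(V)}$ and $(-)\wedge S^V$. Up to that point the proposal is correct.

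The genuine gap is in the treatment of the second component $X_{\alpha_1} = \map^G(S^U,S^U;\pi_B^A)$ when the trivial representation is absent. You correctly identify this as the main obstacle, but the strategy you sketch --- a tom-Dieck-style decomposition of $\map^G_*(S^U/S^0, S^U)$ using $(S^U/S^0)^H \simeq S^{U^H}\vee S^1$ and a connectivity estimate to discard the $\Omega S^{U^H}$ factor --- is not developed to the point of a proof and would in fact require substantial new work: Proposition~\ref{P:stable term X} is formulated for subspaces $X\subseteq U$ and does not directly apply to the quotient $S^U/S^0$; you would also still need a separate argument that the stabilization map restricted to $X_{\alpha_1}$ is a $k$-equivalence, which does not follow from anything you have set up. The paper instead avoids re-running the tom Dieck machinery on $X_{\alpha_1}$ entirely: it constructs explicit maps $\sigma,\tau$ (essentially concatenation with $\inv$ and $\id$ via the pinch map) which are equivariant homotopy equivalences between $X_{\alpha_0}$ and $X_{\alpha_1}$ (Lemma~\ref{L:tau sigma he}), and then proves the delicate technical fact --- Lemma~\ref{L:tau wedge S^V homotopy commutative}, whose proof is an explicit homotopy with careful continuity checks near $\infty$ --- that $\tau$ commutes up to homotopy with $(-)\wedge S^V$. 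That single lemma transports both the $k$-equivalence of the stabilization map and the identification of the stable homotopy groups from $X_{\alpha_0}$ to $X_{\alpha_1}$ with no further analysis. You should either supply that homotopy-equivalence argument or carry out in full the fixed-point analysis of $\map^G_*(S^U/S^0,S^U)$, including a stabilization statement for it; as written, neither is done.
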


\begin{void}
{\bf Relation with  tom Dieck splitting.}
Let $\mathbf{S}$ be the sphere spectrum in a category of $G$-spectra on a \emph{complete} universe \cite[Sec. I.2]{LMS86}.
tom Dieck's splitting \cite[Satz 2]{tomDieck75} 
yields $\pi_* (\mathbf{S}^G) \cong \oplus_H \, \omega_*(BWH)$ where $H$ runs through representatives of the conjugacy classes of \emph{all} the subgroups of $G$.
Theorem \ref{T:tom Dieck extended} can be viewed as an extension of this result to non-complete $G$-universes, and in fact, ``universes'' which do not contain the trivial representation.

Indeed, hypothesis \ref{H:U} makes $U_{\leq n}$ an indexing sequence in a $G$-universe $\U$ underlying a category of $G$-spectra in the sense of \cite[Sec. I.2]{LMS86}, with the only except that we do not insist that $\Irr(U_\bullet)$ contains the trivial representation.
If $\Irr(U_\bullet)$ contains all the irreducible representations of $G$ then $\U$ is a \emph{complete} $G$-universe and Theorem \ref{T:tom Dieck extended} recovers tom Dieck's splitting since $\Irr(U_\bullet)$ contains all $\Irr(\Ind_H^G(\RR))$, since $\mathbf{S} \simeq F(\mathbf{S},\mathbf{S})$, and since sequential colimits commute with $G$-fixed points and homotopy groups.
\end{void}

If the trivial representation is present in $\Irr(U_\bullet)$  then $S(U_{\leq n})$ has a fixed point and Theorem \ref{T:linear spheres} can be deduced from Hauschild's results \cite[Satz 2.4]{Hauschild77}.
Things are harder in its absence.
Becker and Schultz proved the theorem in the case that $G$ acts freely \cite{BeSch73} using geometric methods.
Spectral sequence arguments were used by Schultz \cite[Prop. 6.5]{Schultz73II} to prove Theorem \ref{T:linear spheres} when $G$ is cyclic.
Klaus proved that for any $k \geq 1$ the groups  $\pi_{k} \map^G(S(U_{\leq n}),S(U_{\leq n}))$, where $\id$ is the basepoint, are finite  for all sufficiently large $n$ \cite[Proposition 2.5]{Klaus11}.
The author improved this result in \cite{Lib16}, giving bounds for their order (uniform in $n$ for each $k$).
However, to establish Theorem \ref{T:linear spheres} in its full generality spectral sequences become unmanageable. 

The stabilization statement in Theorem \ref{T:linear spheres} is a consequence of Proposition \ref{P:stabilisation main result} while the identification of the limit groups is a special case of Proposition \ref{P:stable term X}.
These propositions are the main technical results of this paper.
They allow, in principle, to prove Theorem \ref{T:linear spheres} for a larger class of spaces than linear spheres.
Their hypotheses, though, are very restrictive. 

{\bf Acknowledgements:} 
I would like to thank Michael Crabb who read early versions of this paper and shared his ideas and most importantly, suggesting the description of the limit groups in Theorem \ref{T:linear spheres} as a direct sum of stable homotopy groups.
I would also like to thank Irakli Patchkoria for helpful discussions. 

\section{Preliminaries}
\label{Sec:Preliminaries}

\begin{defn}\label{D:k-equivalence}
A map $f \colon X \to Y$ of spaces is called a $k$-equivalence if it is bijective on components and for any $x \in X$ the induced maps $\pi_i(X,x) \to \pi_i(Y,f(x))$ are isomorphisms for all $1 \leq i \leq k$ and epimorphism for $k+1$.
\end{defn}

This is just a (convenient) ``shift by $1$'' of the standard definitions of $n$-connectedness of maps, see \cite{Wh78}.
A space $X$ is called $k$-connected, where $k \geq 0$, if $X \to *$ is a $k$-equivalence.
We will write
\[
\conn X =k.
\]
By convention $\conn X=-1$ if the number of path components of $X$ is not $1$.
The next two results are straightforward.

%
%
\begin{lemma}\label{L:k equivalences two out of three}
Let $k \geq 0$.
Consider a morphism of fibre sequences where $b_1 \in B_1$.
\[
\xymatrix{
\Fib(p_1,b_1) \ar[r] \ar[d]^{h} & E_1 \ar@{->>}[r]^{p_1} \ar[d]^{g} & B_1 \ar[d]^{f} \\
\Fib(p_2,f(b_1)) \ar[r] & E_2 \ar@{->>}[r]_{p_2}  & B_2.
}
\]
\begin{enumerate}
\item
\label{L:k equivalences 2 outof 3:total}
If $f$ is a $k$-equivalence and the map $h$ is a $k$-equivalence for any choice of $b_1 \in B_1$ then $g$ is a $k$-equivalence.
\item
\label{L:k equivalences 2 outof 3:fibres}
If $f$ is a $(k+1)$-equivalence and $g$ is a $k$-equivalence then $h$ is a $k$-equivalence for any choice of $b_1 \in B_1$.
\end{enumerate}
\end{lemma}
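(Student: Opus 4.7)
The plan is to derive both parts from the naturality of the long exact sequence of homotopy groups of a fibration, applied with varying basepoints, combined with the standard five/four lemma. The only non-routine point is the low-dimensional piece, where the long exact sequence degenerates to an exact sequence of pointed sets with a group action and has to be chased by hand.

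For part \eqref{L:k equivalences 2 outof 3:total}, fix any $e_1 \in E_1$, set $b_1 = p_1(e_1)$ and $b_2 = f(b_1)$, and write $F_j$ for the corresponding fibres. Naturality of the long exact sequence provides, for every $i\geq 1$, a commutative ladder
\[
\xymatrix@C=14pt{
\pi_{i+1}(B_1)\ar[r]\ar[d]_{\pi_{i+1}f} & \pi_i(F_1)\ar[r]\ar[d]_{\pi_i h} & \pi_i(E_1)\ar[r]\ar[d]_{\pi_i g} & \pi_i(B_1)\ar[r]\ar[d]_{\pi_i f} & \pi_{i-1}(F_1)\ar[d]_{\pi_{i-1}h}\\
\pi_{i+1}(B_2)\ar[r] & \pi_i(F_2)\ar[r] & \pi_i(E_2)\ar[r] & \pi_i(B_2)\ar[r] & \pi_{i-1}(F_2).
}
\]
For $1\leq i\leq k$ the four outer vertical maps are isomorphisms, so the five lemma yields that $\pi_i g$ is an isomorphism. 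At $i=k+1$ the maps $\pi_{k+1}h$ and $\pi_{k+1}f$ are epi and $\pi_k h$, $\pi_k f$ are iso, so the four lemma gives $\pi_{k+1}g$ epi. Bijectivity of $\pi_0 g$ is handled directly from the pointed-set exact sequence $\pi_1(B_j)\to\pi_0(F_j)\to\pi_0(E_j)\to\pi_0(B_j)$: surjectivity follows by lifting an arbitrary $[e_2]\in\pi_0(E_2)$ first to a component of $B_1$ via the bijection $\pi_0 f$, reducing to the fibre, and then using surjectivity of $\pi_0 h$; injectivity follows by using the injectivity of $\pi_0 f$ to push two elements of $\pi_0 E_1$ into a common fibre (via a path lift of a path in $B_1$), and then invoking injectivity of $\pi_0 h$.

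Part \eqref{L:k equivalences 2 outof 3:fibres} is the same ladder read differently. The upgraded hypothesis that $f$ is a $(k{+}1)$-equivalence supplies precisely the additional input (iso at $\pi_{k+1}$, epi at $\pi_{k+2}$) needed so that the five lemma gives $\pi_i h$ iso for $1\leq i\leq k$ and the four lemma, applied to
\[
\xymatrix@C=14pt{
\pi_{k+2}(B_1)\ar[r]\ar@{->>}[d] & \pi_{k+1}(F_1)\ar[r]\ar[d]_{\pi_{k+1}h} & \pi_{k+1}(E_1)\ar[r]\ar@{->>}[d] & \pi_{k+1}(B_1)\ar[d]_{\cong}\\
\pi_{k+2}(B_2)\ar[r] & \pi_{k+1}(F_2)\ar[r] & \pi_{k+1}(E_2)\ar[r] & \pi_{k+1}(B_2),
}
\]
gives $\pi_{k+1}h$ epi. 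Bijectivity of $\pi_0 h$ is again a direct pointed-set chase, this time using that $\pi_1 f$ is an isomorphism (so the $\pi_1(B_j)$-actions on $\pi_0(F_j)$ are matched) together with the bijections $\pi_0 g$ and $\pi_0 f$.

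The only genuine obstacle in this proof is the bookkeeping at $\pi_0$ and $\pi_1$, where the LES is not a sequence of groups. I would dispatch it by recording, once and for all, that $\pi_1(B)\to\pi_0(F)\to\pi_0(E)\to\pi_0(B)$ is exact as pointed sets and that the fibres of $\pi_0(F)\to\pi_0(E)$ are the $\pi_1(B)$-orbits; after this, the same diagram chases as in the abelian case go through. Everything else is routine application of the five and four lemmas at each fixed $i$.
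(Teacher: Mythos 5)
Your proof is correct and takes the same route as the paper, which simply records this as a standard diagram chase of the long exact sequence of pointed sets and groups, noting (as you do) that the $\pi_0$-surjectivity/injectivity in part (1) requires path lifting along $p_2$. Just be aware that in the $\pi_0$-injectivity chase for part (1) you also implicitly use that $\pi_1 f$ is surjective (available since $f$ is a $k$-equivalence with $k\geq 0$) to move a loop in $B_2$ back to $B_1$ before applying the orbit description of the fibres of $\pi_0(F)\to\pi_0(E)$.
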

\begin{proof}
This is standard diagram chase of exact sequences of pointed sets and groups.
The first assertion is slightly more delicate in connection to the surjection on components and uses the homotopy lifting property of $p_2$.
\end{proof}

\begin{lemma}\label{L:k equivalence transitive fibres}
Let $k \geq 0$ and consider the following ladder of Serre fibrations
\[
\xymatrix{
E_1 \ar[d]_h \ar@{->>}[r]^{q_1} & D_1 \ar[d]^{g} \ar@{->>}[r]^{p_1} & B_1 \ar[d]^f 
\\
E_2 \ar@{->>}[r]_{q_2} & D_2 \ar@{->>}[r]_{p_2} & B_2 
}
\]
Assume that $f,g,h$ induce $k$-equivalences on the fibres of $p_1$ and $p_2$ and on the fibres of $q_1$ and $q_2$.
Then they induce $k$-equivalences on the fibres of $p_1 \circ q_1$ and $p_2 \circ q_2$.
\end{lemma}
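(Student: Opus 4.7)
The plan is to reduce the statement to a single application of Lemma~\ref{L:k equivalences two out of three}(1), after exhibiting the fibres of $p_i\circ q_i$ as the total spaces of Serre fibrations over the fibres of $p_i$.

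First I would fix an arbitrary basepoint $b_1\in B_1$ and set $b_2=f(b_1)$. Since preimages under Serre fibrations are Serre fibrations, the restriction of $q_i$ to $(p_i\circ q_i)^{-1}(b_i)$ is a Serre fibration
\[
\bar q_i\colon \Fib(p_i\circ q_i,b_i)\twoheadrightarrow \Fib(p_i,b_i),
\]
and for any $d\in\Fib(p_i,b_i)$ the fibre of $\bar q_i$ over $d$ is just $\Fib(q_i,d)$. Commutativity of the original ladder guarantees that $h$ restricts to $\Fib(p_1\circ q_1,b_1)\to\Fib(p_2\circ q_2,b_2)$ and assembles with $g|_{\Fib(p_1,b_1)}$ into a morphism of fibre sequences
\[
\xymatrix@C=3em{
\Fib(q_1,d_1) \ar[r] \ar[d]_{h|} & \Fib(p_1\circ q_1,b_1) \ar[r]^-{\bar q_1} \ar[d]^{h|} & \Fib(p_1,b_1) \ar[d]^{g|} \\
\Fib(q_2,g(d_1)) \ar[r] & \Fib(p_2\circ q_2,b_2) \ar[r]_-{\bar q_2} & \Fib(p_2,b_2)
}
\]
for every choice of $d_1\in\Fib(p_1,b_1)$.

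Second I would invoke the two assumptions. The right-hand vertical arrow is a $k$-equivalence because $g$ induces a $k$-equivalence on the fibres of $p_1,p_2$, and the left-hand vertical arrow is a $k$-equivalence for \emph{every} $d_1\in\Fib(p_1,b_1)$ because $h$ induces a $k$-equivalence on the fibres of $q_1,q_2$.

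Finally, Lemma~\ref{L:k equivalences two out of three}(1) applied to the displayed diagram yields that the middle arrow is a $k$-equivalence, which is exactly the statement that $h$ induces a $k$-equivalence on the fibres of $p_1\circ q_1$ and $p_2\circ q_2$ over the (arbitrary) basepoint $b_1$. There is no real obstacle here: the only mildly delicate point is identifying the fibre of the restricted map $\bar q_i$ with $\Fib(q_i,d)$, so that the hypothesis on fibres of $q_i$ matches the hypothesis required by Lemma~\ref{L:k equivalences two out of three}(1) (namely a fibre $k$-equivalence for every basepoint of the base).
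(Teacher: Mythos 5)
Your proposal is correct and follows essentially the same route as the paper: fix $b_1$, view $\Fib(p_i\circ q_i,b_i)\to\Fib(p_i,b_i)$ as the restriction of $q_i$ (a Serre fibration with fibres equal to fibres of $q_i$), and then apply Lemma~\ref{L:k equivalences two out of three}(1) using the $k$-equivalence of $g$ on the base fibres and of $h$ on the fibres of $q_i$. Your write-up is if anything a bit more explicit than the paper's about identifying the fibres of the restricted fibration $\bar q_i$, but the argument is the same.
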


\begin{proof}
Choose $b_1 \in B_1$ and set $b_2=f(b_2)$.
We need to show that $F_i=\Fib(p_i \circ q_i, b_i)$ are $k$-equivalent.
Set $X_i=\Fib(p_i,b_i)$.
We obtain a morphism of fibrations
\[
\xymatrix{
F_1 \ar@{->>}[r]^{q_1|_{F_1}} \ar[d]_{h|_{F_1}} & X_1 \ar[d]^{g|_{X_1}} 
\\
F_2 \ar@{->>}[r]_{q_2|_{F_2}} & X_2 
}
\]
By hypothesis $g|_{X_1}$ is a $k$-equivalence, so by Lemma \ref{L:k equivalences two out of three}\ref{L:k equivalences 2 outof 3:total} it remain to show that all the fibres of the rows of this diagram are $k$-equivalent.
These fibres are equal to the fibres of $q_1$ and $q_2$ and by the hypothesis they are $k$-equivalent.
\end{proof}

Throughout this paper we will work in a ``convenient category of $G$-spaces'', that is the category $\cgwh$ of compactly generated weak Hausdorff spaces, or the category $\cgh$ of compactly generated Hausdorff spaces, see \cite{St66} or \cite{Stlnd09}.
This category has products and function complexes $F(X,Y)$ giving adjunction homeomorphisms $\map(Z \times X, Y) \cong \map(Z,F(X,Y))$ where $\map$ denotes the set of morphisms in $\cgwh$.
In fact, $\cgwh$ is enriched over itself and  $\map(X,Y) \cong F(X,Y)$.

Let $G$ be a discrete group, e.g finite.
Let $G-\cgwh$ be the category of $G$-spaces. 
Regarding $X$ and $Y$ as objects in $\cgwh$ via the forgetful functor, $F(X,Y)$ is equipped with a standard action of $G$ where $(g \cdot \vp)(x) = g\vp (g^{-1}x)$.
In this way the set $\map^G(X,Y)$ of all $G$-maps $X \to Y$ is equipped with a topology giving rise to the adjunction homeomorphism $\map^G(Z \times X, Y) \cong \map^G(Z,F(X,Y))$.

Let $Y \subseteq X$ be an inclusion of $G$-spaces.
We denote by $G_x$ the stabilizer of $x \in X$.
Set
\[
\Iso_G(X,Y)=\{ G_x : x \in X \setminus Y\}.
\]
If $Y= \emptyset$ we will simply write $\Iso_G(X)$.

An inclusion $B \subseteq A$ of $G$-spaces is a {\em relative $G$-CW complex} if $A$ is obtained from $B$ by attaching equivariant cells.
A $G$-CW complex is a space obtained in this way from the empty set.
We emphasize that by $G$-CW complexes we always mean that $G$ acts cellularly (by permuting cells).
See \cite[Chapter II.1]{tomDieck87}.
For $H \in \Iso_G(A,B)$, the {\em $H$-relative dimension} is 
\[
\dim_H(A,B) \defeq \dim \, (A^H,B^H).
\]
This is the maximum dimension of an equivariant cell of type $G/H$ in $A$ which is not contained in $B$.
For any $G$-space $Y$, a relative $G$-CW complex $(A,B)$ gives rise to a Serre fibration  $\map^G(A,Y) \to \map^G(B,Y)$.

A map of $G$-spaces is a $k$-equivalence if this is the case by forgetting the action of $G$.

%
%
\begin{lemma}\label{L:k equivalence on fibres}
Fix some $k \geq 0$.
Suppose $B \subseteq A$ is an inclusion of finite dimensional $G$-CW complexes and $f \colon X \to Y$ is a map of $G$-spaces.
For any $H \in \Iso_G(A,B)$ set $n_H=\dim_H(A,B)$ and assume that the map $X^H \xto{f^H} Y^H$ is a $(k+n_H)$-equivalence.
Then for any $\vp \in \map^G(B,X)$ the map $f_*$ induced on fibres in 
\[
\xymatrix{
\Fib(i^*,\vp) \ar[r] \ar[d]_{f_*} & \map^G(A,X) \ar@{->>}[r]^{i^*} \ar[d]_{f_*} & \map^G(B,X) \ar[d]^{f_*} \\
\Fib(i^*,f \circ \vp) \ar[r] &  \map^G(A,Y) \ar@{->>}[r]^{i^*} & \map^G(B,Y) 
}
\]
is a $k$-equivalence of spaces.
\end{lemma}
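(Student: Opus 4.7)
The plan is to induct on the cells of the relative $G$-CW complex $(A,B)$, handling a single cell attachment at the inductive step and gluing the layers via Lemma \ref{L:k equivalence transitive fibres}. The base case $A = B$ is trivial since the fibres are singletons. For the inductive step, suppose $B \subseteq A_0 \subseteq A_1 \subseteq A$, where $A_1$ is obtained from $A_0$ by attaching a single equivariant cell, i.e.\ by a pushout along an attaching map $G/H \times S^{n-1} \to A_0$ and the inclusion $G/H \times S^{n-1} \hookrightarrow G/H \times D^n$, with $H \in \Iso_G(A,B)$ and $n \leq n_H$. For $Z \in \{X,Y\}$, write $p^Z \colon \map^G(A_0,Z) \to \map^G(B,Z)$ and $q^Z \colon \map^G(A_1,Z) \to \map^G(A_0,Z)$ for the restriction Serre fibrations. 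Applying Lemma \ref{L:k equivalence transitive fibres} to the ladder with these rows and vertical maps $f_*$, it suffices to show that $f_*$ is a $k$-equivalence on the fibres of $p^Z$ (granted by the inductive hypothesis for $(A_0,B)$) and on the fibres of $q^Z$.

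For the fibres of $q^Z$: applying $\map^G(-,Z)$ to the cell-attachment pushout produces a pullback, and the adjunction $\map^G(G/H \times K, Z) \cong \map(K, Z^H)$ identifies $q^Z$ as pulled back from $\map(D^n, Z^H) \to \map(S^{n-1}, Z^H)$. Thus the fibre of $q^Z$ over $\varphi_0 \in \map^G(A_0,Z)$ equals the fibre of the latter over $\psi_0 := \varphi_0|_{G/H \times S^{n-1}}$, viewed as a map $S^{n-1} \to Z^H$. This fibre is empty when $[\psi_0] \neq 0$ in $\pi_{n-1}(Z^H)$, and otherwise (after picking any extension of $\psi_0$ as basepoint) is naturally identified with $\Omega^n Z^H$ via the cofibre sequence $S^{n-1} \hookrightarrow D^n \twoheadrightarrow S^n$. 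Under this identification $f_*$ becomes $\Omega^n f^H$; since $f^H$ is a $(k+n_H)$-equivalence and $n \leq n_H$, the map $\Omega^n f^H$ is a $(k + n_H - n)$-equivalence, hence at least a $k$-equivalence.

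The main obstacle is to verify that $f_*$ is bijective on $\pi_0$ of these fibres, i.e.\ that the fibre of $q^X$ over $\varphi_0$ is empty iff the fibre of $q^Y$ over $f \circ \varphi_0$ is empty. The obstruction classes lie in $\pi_{n-1}(X^H)$ and $\pi_{n-1}(Y^H)$ and correspond under $f^H_*$; since $f^H$ is a $(k+n_H)$-equivalence and $n - 1 < n_H$, this map is an isomorphism, so the obstructions vanish together. With this in place, either both fibres are empty (nothing to check) or both are non-empty and the $\Omega^n$-identification of the previous paragraph delivers the required $k$-equivalence, uniformly in the basepoint, completing the inductive step.
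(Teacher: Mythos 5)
Your proof follows the paper's strategy closely: induct over the cell structure, use Lemma~\ref{L:k equivalence transitive fibres} to glue the layers, reduce a single layer to the fibre of $\map(D^n,Z^H)\to\map(S^{n-1},Z^H)$ via the pushout/pullback adjunction, and then invoke the connectivity hypothesis on $f^H$. Two points deserve attention, however. First, the paper inducts on the skeletal filtration, attaching \emph{all} $n$-cells at once; the resulting fibre is a product $\prod_i$ indexed by those cells, and products of $k$-equivalences are $k$-equivalences. Your cell-by-cell induction works as stated only when there are finitely many cells, whereas the hypothesis merely asks that $(A,B)$ be \emph{finite dimensional}, allowing infinitely many cells per dimension. (This is easy to repair by handling all cells of a given dimension simultaneously, as the paper does.) Second, the paper never identifies the fibre of $\map(D^n,Z^H)\to\map(S^{n-1},Z^H)$ explicitly; it feeds the left map ($(k{+}n)$-equivalence since $\map(D^n,-)\simeq(-)$) and the right map ($(k{+}1)$-equivalence on $(n{-}1)$-dimensional source) into Lemma~\ref{L:k equivalences two out of three}\ref{L:k equivalences 2 outof 3:fibres} and concludes without ever touching $\Omega^n$. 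Your identification of the nonempty fibre with $\Omega^n Z^H$ and of $f_*$ with $\Omega^n f^H$ is morally right, but as phrased it is imprecise: the fibre over an arbitrary nullhomotopic $\psi_0$ is only \emph{weakly} equivalent to $\Omega^n Z^H$ after a choice of extension and of nullhomotopy, the ``obstruction class in $\pi_{n-1}(Z^H)$'' must really be read in the set of unbased homotopy classes (affected by a $\pi_1$-action), and the cases $n=0$ (where $\pi_{-1}$ appears) and $n=1$ (where $\pi_0$ is not a group) need separate low-degree treatment. None of these is fatal, but the paper's route via Lemma~\ref{L:k equivalences two out of three} avoids every one of these choices and edge cases, which is precisely what it buys you.
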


\begin{proof}
We use induction on $n=\dim(A,B)$.
If $n=-1$ then $A=B$ and the result is trivial.
Assume that $n \geq 0$ and let $A' \subseteq A$ be the union of the $(n-1)^\text{st}$ skeleton of $A$ with $B$. 
Let $i \colon B \to A$ and $j \colon B \to A'$ and $\ell \colon A' \to A$ denote the inclusions.
We obtain a diagram of fibrations
\begin{equation}\label{E:k equivalences step 1}
\vcenter{
\xymatrix{
\map^G(A,X) \ar[d]^{f_*} \ar@{->>}[r]^{\ell^*} &
\map^G(A',X) \ar[d]^{f_*} \ar@{->>}[r]^{j^*} &
\map^G(B,X) \ar[d]^{f_*} \\
\map^G(A,Y) \ar@{->>}[r]^{\ell^*} &
\map^G(A',Y) \ar@{->>}[r]^{j^*} &
\map^G(B,Y) 
}
}
\end{equation}
such that composition of the rows are the maps $i^*$. 
By construction $\dim (A',B) \leq n-1$ and also $\dim_H(A',B) \leq \dim_H(A,B)=n_H$ for any $H \in \Iso_G(A',B)$.
The induction hypothesis applies to the inclusion $B \subseteq A'$ and we deduce that the fibres of $j^*$ in the second square are $k$-equivalent.
By Lemma \ref{L:k equivalence transitive fibres} it remains to show that the fibres of the maps $\ell^*$ are $k$-equivalent.

Since $A$ is obtained from $A'$ by attaching equivariant $n$-cells, we get a pushout diagram
\[
\xymatrix{
\coprod_{i \in \A} S^{n-1} \times G/H_i \ar@{^(->}[d] \ar[r]^(0.7){\eta} & A' \ar@{^(->}[d] \\
\coprod_{i \in \A} D^{n} \times G/H_i \ar[r] & A.
}
\]
where $\A$ indexes the equivariant $n$-cells attached to $A'$, hence $n_{H_i}=n$ for all $i \in \A$.
By applying $\map^G(-,T)$, where $T$ is any $G$-space, we obtain a pullback diagram, natural in $T$,
\[
\xymatrix{
\map^G(A,T) \ar@{->>}[r] \ar[d] &
\map^G(A',T) \ar[d]^{\eta^*} 
\\
\prod_i \map(D^n,T^{H_i}) \ar@{->>}[r] & \prod_i \map(S^{n-1},T^{H_i}).
}
\]
Since this is is a pullback square, the fibres of the horizontal maps are homeomorphic.
Applying this for $T=X$ and $T=Y$, it remains to prove that the maps on all fibres in the following commutative diagram
\[
\xymatrix{
\prod_i \map(D^n,X^{H_i}) \ar@{->>}[r] \ar[d]^{f_*} & \prod_i \map(S^{n-1},X^{H_i}) \ar[d]^{f_*} \\
\prod_i \map(D^n,Y^{H_i}) \ar@{->>}[r] & \prod_i \map(S^{n-1},Y^{H_i})
}
\]
are $k$-equivalences.
If $n=0$ then the spaces on the right are points and the map of the spaces on the left is by hypothesis $n_{H_i}+k=n+k=k$ equivalence, so the map on fibres is a $k$-equivalence.
If $n \geq 1$ then by the hypothesis the vertical arrow on the left is a $(k+n)$-equivalence, hence a $k$-equivalence, and the vertical arrow on the right is a $k+n-(n-1)=k+1$ equivalence.
Lemma \ref{L:k equivalences two out of three} shows that the map on all fibres is a $k$-equivalence and this completes the proof.
\end{proof}

\begin{cor}\label{C:maps with highly connected target Y}
Let $k \geq 0$.
Let $A$ be a finite $G$-CW complex and $B$ be a subcomplex.
Let $Y$ be a $G$ space  and let $k \geq 0$.
Assume that $\conn Y^H - \dim_H(A,B) \geq k$ for every $H \in \Iso_G(A,B)$.
Then $\map^G(A,Y) \xto{i^*} \map^G(B,Y)$ is a $k$-equivalence.
\end{cor}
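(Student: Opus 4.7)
The plan is to apply Lemma \ref{L:k equivalence on fibres} to the unique $G$-map $f \colon Y \to \ast$, where $\ast$ is a one-point $G$-space. The lemma's hypothesis demands that $f^H \colon Y^H \to \ast$ be a $(k+n_H)$-equivalence for every $H \in \Iso_G(A,B)$, where $n_H = \dim_H(A,B)$. By the convention fixed just after Definition \ref{D:k-equivalence}, a space is $k$-connected precisely when its map to a point is a $k$-equivalence, and so this requirement is exactly the standing hypothesis $\conn Y^H \geq k + n_H$.

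Invoking Lemma \ref{L:k equivalence on fibres}, for any $\vp \in \map^G(B,Y)$ the induced map on fibres
\[
\Fib\bigl(i^* \colon \map^G(A,Y) \to \map^G(B,Y),\ \vp\bigr) \longrightarrow \Fib\bigl(i^* \colon \map^G(A,\ast) \to \map^G(B,\ast),\ f \circ \vp\bigr)
\]
is a $k$-equivalence. The target fibration is the identity $\ast \to \ast$, so the right-hand fibre is a single point. Hence the left-hand fibre is $k$-equivalent to $\ast$, which both forces it to be non-empty (since a $k$-equivalence is a bijection on $\pi_0$, so $\emptyset \to \ast$ is excluded) and makes it $k$-connected.

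It then remains to deduce the corollary from the statement that every fibre of $i^*$ is non-empty and $k$-connected. I would feed this into the long exact sequence of the Serre fibration $i^* \colon \map^G(A,Y) \to \map^G(B,Y)$, whose fibrancy is recorded in the paragraph preceding Lemma \ref{L:k equivalence on fibres}. Non-emptiness of every fibre yields surjectivity of $i^*$ on $\pi_0$; path lifting together with path-connectedness of the fibres yields injectivity on $\pi_0$; and the vanishing of $\pi_i$ of each fibre for $i \leq k$ yields isomorphisms on $\pi_i$ for $1 \leq i \leq k$ and a surjection on $\pi_{k+1}$. I do not anticipate any real obstacle: Lemma \ref{L:k equivalence on fibres} is doing essentially all of the work, and the only translation one has to make is between ``$Y^H$ is $(k+n_H)$-connected'' and ``$Y^H \to \ast$ is a $(k+n_H)$-equivalence''.
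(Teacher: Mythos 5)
Your proof is correct and is essentially the paper's argument: the paper likewise applies Lemma~\ref{L:k equivalence on fibres} to $B\subseteq A$ and $Y\to *$ to see that every fibre of $i^*$ is $k$-connected, and then finishes by invoking Lemma~\ref{L:k equivalences two out of three}\ref{L:k equivalences 2 outof 3:total} with $g=i^*$ and $f=\id_{\map^G(B,Y)}$ rather than rerunning the long exact sequence diagram chase by hand. Since that lemma is itself proved by exactly the chase you sketch (including the $\pi_0$ subtlety via the homotopy lifting property), the two arguments coincide.
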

\begin{proof}
Lemma \ref{L:k equivalence on fibres} with $B \subseteq A$ and $Y \to *$ shows that all the fibres of $\map^G(A,Y) \to \map^G(B,Y)$ are $k$-connected.
Then apply Lemma \ref{L:k equivalences two out of three}\ref{L:k equivalences 2 outof 3:total} with $g=i^*$ and $f$ the identity on $\map^G(B,Y)$.
\end{proof}

\begin{void}\label{V:eta}
Let $\Sigma X$ be the \emph{unreduced} suspension of a space $X$.
The images of $0,1 \in I$ yield two distinguished points
\[
\alpha_0,\alpha_1 \in \Sigma X,
\]
of which $\alpha_1$ is chosen as the basepoint.
If $X$ is a $G$-space both $\alpha_0,\alpha_1$ are fixed points.

Let $Y$ be a space and $y_0,y_1 \in Y$ points.
Let $F(\Sigma X,Y;y_0,y_1)$ be the subspace of maps $f \colon \Sigma X \to Y$ such that $f(\alpha_0)=y_0$ and $f(\alpha_1)=y_1$.
Let $\PP_{y_0,y_1}Y$ denote the space of paths $\omega \colon I \to Y$ such that $\omega(0)=y_0$ and $\omega(1)=y_1$.
There is an adjunction homeomorphism
\[
F(\Sigma X,Y;y_0,y_1) \cong F(X,\PP_{y_0,y_1}Y).
\]
In particular $F(\Sigma X,\Sigma X;\alpha_0,\alpha_1) \cong F(X,\PP_{\alpha_0,\alpha_1} \Sigma X)$ and denote the adjoint of the identity by
\[
\eta_X \colon X \to \PP_{\alpha_0,\alpha_1} \Sigma X \qquad (\eta_X(x)(t) = (t,x) \in \Sigma X)
\]
If $X$ is a $G$-space then $\eta_X$ is a $G$ map, $(\eta_X)^H = \eta_{X^H}$ for any $H \leq G$, and $\eta_X$ the unit of the adjunction isomorphism
\[
\map^G(\Sigma X, \Sigma X) \cong \map^G(X, \PP_{\alpha_0,\alpha_1} \Sigma X).
\]
\end{void}


We will need the following (somewhat expected) corollary of Freudenthal's theorem.

\begin{prop}
\label{P:Freudenthal consequence new}
\begin{enumerate}[label=(\alph*)]
\item
\label{P:Freudenthal a}
The map $\eta_{S^n} \colon S^n \to \PP_{\alpha_0,\alpha_1}\Sigma S^n$ is a $(2n-2)$-equivalence, where $n \geq 1$.

\item
\label{P:Freudenthal b}
Suppose $m>n\geq 1$.
Then $F(S^n,S^m) \xto{\susp \colon f \mapsto \Sigma f} F(\Sigma S^n,\Sigma S^m)$ is a $(m-1)$-equivalence of path connected spaces.
\end{enumerate}
\end{prop}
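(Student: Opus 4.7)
For part (a), the plan is to reduce to Freudenthal's suspension theorem. Since $\Sigma S^n$ is path connected, concatenation with any path $\lambda$ from $\alpha_1$ to $\alpha_0$ yields a homotopy equivalence $\PP_{\alpha_0,\alpha_1}\Sigma S^n \simeq \Omega_{\alpha_1}\Sigma S^n$. I would choose $\lambda$ to be the reverse of the meridian $\eta_{S^n}(x_0)$ for some fixed basepoint $x_0 \in S^n$; under the resulting equivalence, $\eta_{S^n}$ is carried to a representative of the classical Freudenthal map $S^n \to \Omega \Sigma S^n \simeq \Omega S^{n+1}$. Freudenthal's theorem then asserts that this map induces isomorphisms on $\pi_i$ for $i \leq 2n-2$ and an epimorphism on $\pi_{2n-1}$, which is precisely the $(2n-2)$-equivalence demanded by Definition \ref{D:k-equivalence}.

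For part (b), I would factor $\susp$ as
\[
F(S^n,S^m) \xto{(\eta_{S^m})_*} F(S^n,\PP_{\alpha_0,\alpha_1}\Sigma S^m) \cong F(\Sigma S^n,\Sigma S^m;\alpha_0,\alpha_1) \hookrightarrow F(\Sigma S^n,\Sigma S^m),
\]
where the middle is the adjunction of \ref{V:eta} and the last arrow is inclusion; unwinding definitions, the composite sends $f$ to $\Sigma f$. The first arrow is a $(2m-n-2)$-equivalence by Lemma \ref{L:k equivalence on fibres} applied with trivial group $G$, $A=S^n$, $B=\emptyset$, and $f=\eta_{S^m}$ (a $(2m-2)$-equivalence by (a), with $\dim S^n = n$). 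Since $m \geq n+1$, one has $2m-n-2 \geq m-1$. The last arrow is the fibre inclusion over $(\alpha_0,\alpha_1)$ of the Serre fibration $F(\Sigma S^n,\Sigma S^m) \to \Sigma S^m \times \Sigma S^m$ given by double evaluation at the two fixed points; since $\Sigma S^m \cong S^{m+1}$ is $m$-connected, the long exact sequence of this fibration shows the inclusion is an $(m-1)$-equivalence. Composing the two arrows, $\susp$ is an $(m-1)$-equivalence. Path connectedness of $F(S^n,S^m)$ and $F(\Sigma S^n,\Sigma S^m)=F(S^{n+1},S^{m+1})$ follows from $[S^k,S^l]=0$ whenever $l>k$.

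The delicate step is part (a). Although the conclusion is essentially Freudenthal, identifying $\eta_{S^n}$ — defined via the \emph{unreduced} suspension with its two distinguished fixed points $\alpha_0,\alpha_1$ — with the classical Freudenthal unit $S^n \to \Omega\Sigma S^n$ of the reduced suspension-loop adjunction requires a careful basepoint change (the choice of reference meridian above). Once (a) is in hand, part (b) assembles routinely from the preliminaries.
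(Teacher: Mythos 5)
Your proof is correct, and part (b) is essentially the paper's own argument (same factorization of $\susp$ through $(\eta_{S^m})_*$ and the adjunction, with Lemma~\ref{L:k equivalence on fibres} applied to $\emptyset\subseteq S^n$ and $\eta_{S^m}$, followed by the fibre inclusion of the double evaluation fibration, and the same connectivity comparison $2m-n-2 \geq m-1$).

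For part (a) you take a slightly different route to the Freudenthal unit. The paper post-composes $\eta_{S^n}$ with the quotient $\pi\colon \Sigma S^n \to \Sigma^{\red} S^n$, which collapses the meridian through $x_0$; since $\pi$ sends both $\alpha_0$ and $\alpha_1$ to the basepoint, it induces a homotopy equivalence $\PP_{\alpha_0,\alpha_1}\Sigma S^n \to \Omega\Sigma^{\red}S^n$, and the composite with $\eta_{S^n}$ is on the nose the canonical map $S^n\to\Omega\Sigma^{\red}S^n$; one then quotes Freudenthal with no further bookkeeping. You instead concatenate with the reverse meridian to move from $\PP_{\alpha_0,\alpha_1}\Sigma S^n$ to $\Omega_{\alpha_1}\Sigma S^n$ and then identify the result with the Freudenthal unit up to homotopy; this is a legitimate argument, but it trades the paper's ``by inspection'' strict equality for a homotopy that has to be spelled out (the image of $\eta_{S^n}(x)$ is $\overline{\eta_{S^n}(x_0)}*\eta_{S^n}(x)$, which agrees with the classical unit only after observing that $\pi$ kills the reference meridian). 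You flag this as the delicate step, which is fair; the paper's choice of $\pi$ as the comparison map neatly sidesteps it. Both approaches are sound and buy the same conclusion, so the difference is one of economy rather than substance.
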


\begin{proof}
\ref{P:Freudenthal a}
Choose a basepoint $x_0 \in S^n$ and consider the  quotient map $\pi \colon \Sigma S^n \to \Sigma^{\red}S^n$.
By inspection $\pi \circ \eta_{S^n}$ is the canonical map $S^n \to \Omega \Sigma^{\red} S^n$ which is a $(2n-2)$-equivalence by Freudenthal's theorem.
The result follows since $\pi$ is a homotopy equivalence.

\ref{P:Freudenthal b}
By definition of $\eta_{S^n}$ the triangle in the following diagram commutes
\[
\xymatrix{
F(S^n,S^m) \ar[r]^{(\eta_{S^n})_*} \ar[d]_{f \mapsto \Sigma f} &
F(S^n,\PP_{\alpha_0,\alpha_1} \Sigma S^m) \ar[dl]^{\cong}  
\\
F(\Sigma S^n,\Sigma S^m; \alpha_0,\alpha_1) \ar@{^(->}[r] &
F(\Sigma S^n,\Sigma S^m) \ar@{->>}[r]^{(\ev_{\alpha_0},\ev_{\alpha_1})} &
\Sigma S^m \times \Sigma S^m.
}
\]
The inclusion of the fibre in the second row is a $(m-1)$-equivalence.
The vertical arrow is a $(2m-2-n)$-equivalence by part \ref{P:Freudenthal a} and Lemma \ref{L:k equivalence on fibres} applied to $\emptyset \subseteq S^n$ and $\eta_{S^m}$, and $F(S^n,S^m)$ is clearly path connected.
Since $m-1 \leq 2m-2-n$, the result follows.
\end{proof}

\section{Square diagrams of spaces}
\label{Sec:squares}

Let $\PP X = F(I,X)$ denote the path space of $X$.
The homotopy pullback of a diagram of spaces $X_0 \xto{f} X_2 \xleftarrow{g} X_1$ is the subspace of $X_0 \times X_1 \times \PP X_2$ consisting of $(x_0,x_1,\omega)$ such that $f(x_0)=\omega(0)$ and $g(x_1)=\omega(1)$.
The homotopy fibre of $X \xto{f} Y$ over $y_0 \in Y$ is the homotopy pullback of $X \xto{f} Y \xleftarrow{y_0} *$.
There is an inclusion $\Fib(f,y_0) \subseteq \hofib(f,y_0)$ via the constant paths, and if $f$ is a Serre fibration this inclusion is a weak homotopy equivalence.

\begin{defn}\label{D:cat sq}
Let $\catsq$ be the category whose objects are commutative diagrams of spaces 
\begin{equation}\label{E:object of catsq}
\A=
\qquad
\vcenter{
\xymatrix{
A_3 \ar[r]^{a_{32}} \ar[d]_{a_{31}} & 
A_2 \ar[d]^{a_{20}} 
\\
A_1 \ar[r]_{a_{10}} & 
A_0
}
}
\end{equation}
Morphisms are natural transformations of diagrams.
Thus, a morphism $\vp \colon \A \to \B$ is a quadruple $(\vp_0,\vp_1,\vp_2,\vp_3)$ of maps $\vp_i \colon A_i \to B_i$ with the obvious commutation relations with the structure maps of $\A$ and $\B$.

A {\em basepoint} for $\A$ is a triple of $\ul{x}=(x_0,x_1,x_2) \in A_0 \times A_1 \times A_2$ such that $a_{20}(x_2)=x_0$ and $a_{10}(x_1)=x_0$. 
Notice that we do not choose $x_3 \in A_3$ compatible with $x_0,x_1,x_2$.
\end{defn}
\noindent
A basepoint of $\A \in \catsq$ gives rise to the following diagram of spaces which we denote by $(\A,\ul{x})$.
\[
(\A,\ul{x}) \qquad = \qquad
\vcenter{
\xymatrix{
A_3 \ar[r]^{a_{32}} \ar[d]_{a_{31}} & 
A_2 \ar[d]^{a_{20}} & 
\bullet \ar[l]_{x_2} \ar[d]
\\
A_1 \ar[r]_{a_{10}} & 
A_0 
& \bullet \ar[l]_{x_0} 
\\
\bullet \ar[r] \ar[u]^{x_1} & 
\bullet \ar[u] & 
\bullet \ar[l] \ar[u]
}
}
\]
We obtain a category $\catsq_*$ whose objects are $(\A,\ul{x})$ with natural transformations between them.
The homotopy limit functor gives rise to a functor $\Lambda \colon \catsq_* \to \spaces$
\begin{equation}\label{E:def Lambda}
\Lambda(\A,\ul{x}) \, \stackrel{\text{def}}{=} \, \holim (\A,\ul{x}).
\end{equation}
Fubini's theorem for homotopy limits \cite[Secs. 24 and 31]{Ch-Sch02} implies that $(\A,\ul{x})$  can be calculated by first taking the homotopy limits of the rows (resp. columns) and then take the homotopy limits of the resulting pullback diagram of spaces.
Therefore
\begin{eqnarray}\label{E:Fubini for Lambda}
&& \Lambda(\A,\ul{x}) \, \cong \, \hofib\left(\, \hofib(a_{32},x_2) \xto{\ (a_{31},a_{20}) \ } \hofib(a_{10},x_0)\, ,\, x_1 \right) \\
\nonumber
&& \Lambda(\A,\ul{x}) \, \cong \, \hofib\left(\, \hofib(a_{31},x_1) \xto{\ (a_{32},a_{10}) \ } \hofib(a_{20},x_0)\, ,\, x_2 \right)
\end{eqnarray}
where $x_1 \in \Fib(a_{10},x_0) \subseteq \hofib(a_{10},x_0)$ and $x_2 \in \Fib(a_{20},x_0) \subseteq \hofib(a_{20},x_0)$.

\begin{lemma}\label{L:elementary facts on Lambda}
Let $\vp \colon \A \to \B$ be a morphism in $\catsq$ depicted by the vertical arrows in
\[
\xymatrix@!0{
A_3 \ar[rr]^{a_{32}} \ar[dd]_{\vp_3} \ar[dr] & &                       
A_2 \ar[dr]^{a_{20}} \ar'[d][dd]                               
\\
& A_1 \ar[rr] \ar[dd] & &                             
A_0 \ar[dd]^{\vp_0}                                           
\\
B_3 \ar[dr]_{b_{31}} \ar'[r][rr]  & &                          
B_2 \ar[dr]                                           
\\
& B_1 \ar[rr]_{b_{10}} & &                                     
B_0
}
\]

\begin{enumerate}
\item
\label{L:elementary facts on Lambda 1}
If the side faces (or the back and front faces) are homotopy pullback squares then the induced map $\Lambda(\A,\ul{x}) \xto{\Lambda(\vp)} \Lambda(\B,\vp(\ul{x}))$ is a (weak) homotopy equivalence for any choice of basepoint $\ul{x}$ for $\A$.

\item
\label{L:elementary facts on Lambda 3}
Let $\ul{x}$ be a basepoint for $\A$. 
Suppose that 
\begin{itemize}
\item[(i)] $\vp_2$ and $\vp_0$ induce a $(k+1)$-equivalence $\hofib(a_{20},x_0) \to \hofib(b_{20},\vp_0(x_0))$ and 
\item[(ii)]  $\vp_1, \vp_3$ induce a $k$-equivalence $\hofib(a_{31},x_1) \to \hofib(b_{31},\vp_1(x_1))$.
\end{itemize}
Then $\Lambda(\A,\ul{x}) \xto{\Lambda(\vp)} \Lambda(\B,\vp(\ul{x}))$ is a $k$-equivalence.

\item 
\label{L:elementary facts on Lambda 4}
Suppose that $A_2 \xto{a_{20}} A_0$ is a Serre fibration and that
\begin{itemize}
\item[(i)] $\hofib(a_{20},x_0) \to \hofib(b_{20},\vp_1(x_0))$ is a $k$-equivalence for any basepoint $x_0 \in A_0$, and
\item[(ii)] $\Lambda(\A,\ul{x}) \xto{\Lambda(\vp)} \Lambda(\B,\vp(\ul{x}))$ is a $k$-equivalence for any choice of basepoint $\ul{x}$ in $\A$.
\end{itemize}
Then $\hofib(a_{31},x_1) \to \hofib(b_{31},\vp(x_1))$ is a $k$-equivalence for any basepoint $x_1 \in A_1$.
\end{enumerate}
\end{lemma}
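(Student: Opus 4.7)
My plan is to apply part (1) of Lemma \ref{L:k equivalences two out of three} via the second Fubini identity of (\ref{E:Fubini for Lambda}), in a manner exactly dual to the proof of (3) (which used part (2) of the same lemma). Fix $x_1 \in A_1$, set $x_0 = a_{10}(x_1)$, and abbreviate $F_1^\A = \hofib(a_{31}, x_1)$, $F_2^\A = \hofib(a_{20}, x_0)$ (with $F_1^\B, F_2^\B$ defined analogously). For any basepoint $\ul{x} = (x_0, x_1, x_2)$ of $\A$ the Fubini identity reads
\begin{equation*}
\Lambda(\A, \ul{x}) \cong \hofib\bigl(F_1^\A \xto{\psi_\A} F_2^\A,\, x_2\bigr),
\end{equation*}
where $\psi_\A$ is induced by $(a_{32}, a_{10})$, and $\vp$ induces a ladder of such homotopy fibre sequences.

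I would first replace $\psi_\A$ and $\psi_\B$ by Serre fibrations via the standard path-space construction, so that Lemma \ref{L:k equivalences two out of three}(1) applies as stated. Its base hypothesis, that $F_2^\A \to F_2^\B$ is a $k$-equivalence, is exactly (i). The remaining verification is that the induced map on fibres over every basepoint of $F_2^\A$ is a $k$-equivalence.

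The main obstacle will be that hypothesis (ii) supplies this $k$-equivalence only at basepoints $x_2 \in \Fib(a_{20}, x_0) \subseteq F_2^\A$, whereas Lemma \ref{L:k equivalences two out of three}(1) demands it at all basepoints. The Serre fibration assumption on $a_{20}$ is precisely what bridges the gap: it makes $\Fib(a_{20}, x_0) \hookrightarrow \hofib(a_{20}, x_0)$ a weak equivalence, so every path component of $F_2^\A$ meets $\Fib(a_{20}, x_0)$. Since homotopy fibres of $\psi_\A$ (resp.\ $\psi_\B$) depend on their basepoint only up to its path component, and the resulting equivalences between fibres at different basepoints are natural in $\vp$, the $k$-equivalence propagates to every basepoint of $F_2^\A$, as required.

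The only edge case to handle is $\Fib(a_{20}, x_0) = \emptyset$, in which case (ii) is vacuous. The Serre fibration property then forces $\hofib(a_{20}, x_0) = \emptyset$; by (i) also $\hofib(b_{20}, \vp_0(x_0)) = \emptyset$; and commutativity of $\A$, $\B$ together with $\vp_0 \circ a_{10} = b_{10} \circ \vp_1$ then forces $F_1^\A$ and $F_1^\B$ to be empty as well, so the conclusion holds trivially.
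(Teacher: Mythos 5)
Your plan for part \ref{L:elementary facts on Lambda 4} matches the paper's argument: both set $x_0=a_{10}(x_1)$, apply Lemma \ref{L:k equivalences two out of three}\ref{L:k equivalences 2 outof 3:total} to the ladder of homotopy fibre sequences coming from \eqref{E:Fubini for Lambda}, take hypothesis (i) as the base, and use the Serre fibration hypothesis to make $\Fib(a_{20},x_0)\hookrightarrow\hofib(a_{20},x_0)$ a weak equivalence so that the fibrewise verification reduces to basepoints $x_2$ where hypothesis (ii) applies. Your explicit fibration replacement, the path-component propagation argument, and the empty-fibre edge case are sound elaborations of steps the paper leaves implicit, though note the proposal treats only part \ref{L:elementary facts on Lambda 4} and not parts \ref{L:elementary facts on Lambda 1} and \ref{L:elementary facts on Lambda 3}.
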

\begin{proof}
\ref{L:elementary facts on Lambda 1}
Since each side face is a homotopy pullback square, the induced maps on homotopy fibres of its rows is a  weak equivalence.
Thus, the vertical arrows in 
\begin{equation}\label{E:homotopy fibres d1}
\xymatrix{
\hofib(a_{31},x_1) \ar[r]^{(a_{32}, a_{10})} \ar[d]_{(\vp_3, \vp_1)} &
\hofib(a_{20},x_0) \ar[d]^{(\vp_2, \vp_0)} 
\\
\hofib(b_{31},\vp_1(x_1)) \ar[r]_{(b_{32}, b_{10})}  &
\hofib(b_{20},\vp_0(x_0))
}
\end{equation}
are weak homotopy equivalences.
Therefore the map induced on the homotopy fibres of the rows over $x_2$ and $\vp_2(x_2)$ are weak equivalences, and the result follows from \eqref{E:Fubini for Lambda}.

\noindent
\ref{L:elementary facts on Lambda 3}
This is immediate from \eqref{E:Fubini for Lambda} and Lemma \ref{L:k equivalences two out of three}\ref{L:k equivalences 2 outof 3:fibres}.

\noindent
\ref{L:elementary facts on Lambda 4}
Choose $x_1 \in A_1$ and set $x_0=a_{10}(x_1)$.
There results a commutative diagram as in \eqref{E:homotopy fibres d1}.
By the hypothesis the vertical arrow on the right of \eqref{E:homotopy fibres d1} is a $k$-equivalence, and our goal is to show the the same is true for the vertical arrow on the left.
By Lemma \ref{L:k equivalences two out of three}\ref{L:k equivalences 2 outof 3:total} it remain to show that for any $x_2 \in \hofib(a_{20},x_0)$ the map induced on the homotopy fibres of the horizontal arrows is a $k$-equivalence.
By hypothesis $A_2 \to A_0$ is a Serre fibration, so the inclusion $\fib(a_{20},x_0) \subseteq \hofib(a_{20},x_0)$ is a weak homotopy equivalence.
Therefore we may consider only $x_2 \in \fib(a_{20},x_0)$ in which case $\ul{x}=(x_0,x_1,x_2)$ forms a basepoint for $\A$ and it follows from \eqref{E:Fubini for Lambda} that the map of the homotopy fibres over $x_2$ and $\vp_2(x_2)$ in the diagram \eqref{E:homotopy fibres d1} is the map $\Lambda(\A,\ul{x}) \xto{\Lambda(\vp)} \Lambda(\B,\vp(\ul{x}))$ which by hypothesis is a $k$-equivalence.
This completes the proof.
\end{proof}

Given an object $\A \in \catsq$ let $I \times \A$ be the object in $\catsq$ obtained by applying the functor $I \times - $ objectwise.
Similarly $\PP \A$ is 
obtained by applying the path space functor $\PP(-)$ objectwise.

\begin{defn}\label{D:homotopy in catsq}
Let $\A,\B$ be objects in $\catsq$.
A {\em homotopy} is a morphism $\vp \colon I \times \A  \to \B$.
We frequently refer to a homotopy as a family of morphisms $\vp_p \colon \A \to \B$ (parameterized by $0 \leq p \leq 1$).
\end{defn}

The adjoint of a homotopy $\vp \colon I \times \A \to \B$ is a morphism $\vp^\# \colon \A \to \PP \B$.
If $\ul{x}$ is a basepoint of $\A$ then $\vp^\#(\ul{x})$ is a basepoint in $\PP \B$.
Evaluation at $p \in I$ gives a morphism $(\PP\B,\vp^\#(\ul{x})) \xto{\ev_p} (\B, \vp_p(\ul{x}))$ in $\catsq_*$ which is an object-wise homotopy equivalence.
We obtain a weak homotopy equivalence $\Lambda(\PP\B,\vp^\#(\ul{x})) \xto{\Lambda(\ev_p)} \Lambda(\B,\vp_p(\ul{x}))$.
The following lemma is an immediate consequence.

\begin{lemma}\label{L:homotopic morphisms in catsq}
Let $\vp_p \colon \A \to B$ be a homotopy in $\catsq$.
Then for any basepoint $\ul{x}$ in $\A$ there is a commutative diagram in which both evaluation morphisms are (weak) homotopy equivalences
\[
\xymatrix{
\Lambda(\A,\ul{x}) \ar[d]_{\Lambda(\vp_{p=1})} \ar[r]^{\Lambda(\vp_{p=0})} \ar[dr]^{\Lambda(\vp^\#)} &
\Lambda(\B,\vp_{p=0}(\ul{x}))
\\
\Lambda(\B,\vp_{p=1}(\ul{x})) &
\Lambda(\PP\B,\vp^\#(\ul{x})) \ar[u]^{\simeq}_{\Lambda(\ev_0)} \ar[l]_{\simeq}^{\Lambda(\ev_1)}
}
\]
In particular, if $\alpha, \beta \colon \A \to \B$ are homotopic morphisms and  $\ul{x}$ of $\A$ is a basepoint then $\Lambda(\alpha, \ul{x})$ is a $k$-equivalence if and only if $\Lambda(\beta,\ul{x})$ is.
\end{lemma}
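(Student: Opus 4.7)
The claim is essentially a formal consequence of the paragraph immediately preceding the lemma, so the plan is mainly a matter of organizing the adjunction and the objectwise homotopy invariance of $\Lambda$.

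First I would verify that the adjoint $\vp^{\#} \colon \A \to \PP \B$ of the homotopy $\vp \colon I \times \A \to \B$ is a morphism in $\catsq$: each component $\vp_i^{\#} \colon A_i \to \PP B_i$ is the exponential adjoint of $\vp_i$, and naturality of the adjunction ensures the four commutation relations with the structure maps of $\A$ and $\PP\B$. For a basepoint $\ul{x} = (x_0,x_1,x_2)$ of $\A$, the triple $\vp^{\#}(\ul{x}) = (\vp_0^{\#}(x_0), \vp_1^{\#}(x_1), \vp_2^{\#}(x_2))$ is a basepoint of $\PP \B$ because $\PP(b_{20}) \circ \vp_2^{\#} = \vp_0^{\#} \circ a_{20}$ (and similarly for the other face) is precisely the statement that $\vp^{\#}$ is a $\catsq$-morphism.

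Next, each evaluation $\ev_p \colon \PP \B \to \B$ is itself a morphism in $\catsq$, and it carries $\vp^{\#}(\ul{x})$ to $\vp_p(\ul{x})$, so it is a morphism $(\PP\B,\vp^{\#}(\ul{x})) \to (\B,\vp_p(\ul{x}))$ in $\catsq_*$. Since $\ev_p \circ \vp^{\#} = \vp_p$ as morphisms $\A \to \B$, functoriality of $\Lambda$ gives $\Lambda(\vp_p,\ul{x}) = \Lambda(\ev_p) \circ \Lambda(\vp^{\#})$, which is exactly the commutativity of the triangle in the statement.

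The only substantive point is that $\Lambda(\ev_p)$ is a weak homotopy equivalence. For this I would observe that on each corner $\ev_p \colon \PP B_i \to B_i$ is a deformation retraction, so $\ev_p$ is an objectwise (strong) homotopy equivalence of pointed squares. Feeding this into the Fubini description \eqref{E:Fubini for Lambda} of $\Lambda$, the induced maps on the relevant homotopy fibres are weak equivalences at each stage, and hence $\Lambda(\ev_p)$ is a weak equivalence; this is just the homotopy invariance of iterated homotopy fibres. The final assertion follows by two-out-of-three: if $H \colon I \times \A \to \B$ is a homotopy from $\alpha$ to $\beta$, then $\Lambda(\alpha,\ul{x})$ and $\Lambda(\beta,\ul{x})$ both equal $\Lambda(\ev_p) \circ \Lambda(H^{\#})$ for $p=0,1$ respectively, with $\Lambda(\ev_0)$ and $\Lambda(\ev_1)$ weak equivalences, so one is a $k$-equivalence iff $\Lambda(H^{\#})$ is iff the other is. There is no real obstacle; the content is entirely packaged in the objectwise homotopy invariance of $\Lambda$, which comes for free from its definition as an iterated homotopy fibre.
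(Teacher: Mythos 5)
Your proposal is correct and takes essentially the same route as the paper: the paper's proof is precisely the paragraph before the lemma, which records that $\vp^{\#}\colon \A \to \PP\B$ is a morphism sending $\ul{x}$ to a basepoint, that $\ev_p$ is an objectwise homotopy equivalence in $\catsq_*$, and that $\Lambda(\ev_p)$ is therefore a weak equivalence, from which the commutative triangle and the two-out-of-three conclusion are immediate. Your elaboration via the Fubini description of $\Lambda$ as an iterated homotopy fibre is exactly the justification the paper leaves implicit.
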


%
%
\section{Join of spaces}\label{Sec:join}

The join 
$X_1* \dots * X_n$ is the homotopy colimit of the diagram of spaces indexed by the opposite category of the poset of the non-empty subsets $\sigma$ of $[n]=\{1,\dots,n\}$, and consisting of the spaces $X_\sigma \defeq \prod_{i \in \sigma} X_i$ and projection maps between them.
Let $\Delta^{n-1}=\{ (t_1,\dots,t_n) : t_i \geq 0, \sum_i t_i=1\}$ be the standard $(n-1)$-simplex in $\RR^n$.
The underlying set of the join is the set of equivalence classes of 
\begin{equation}\label{E:join quotient}
\coprod_{\emptyset \neq \sigma \subseteq [n]} \Delta^{|\sigma|-1} \times X_\sigma
\end{equation}
where for any $\tau \subseteq \sigma$ we declare $(s_i,x_i)_{i \in \tau} \sim (t_i,y_i)_{i \in \sigma}$ if $x_i=y_i$ and $s_i=t_i$ for all $i \in \tau$ and $t_i=0$ for all $i \in \sigma \setminus \tau$.
There are two natural choices to topologize the join, but when $X_1,\dots,X_n$ are compact Hausdorff both agree with the quotient topology, see \cite[Section 2]{Mil56II}. 

\begin{void}{\bf NOTATION:} Since the join will play a key role in this paper we will write
\[
X_1 \cdots X_n \qquad \text{instead of} \qquad X_1* \dots * X_n.
\]
Its points are equivalence classes $[t_1x_1,\dots,t_nx_n]$ where $(t_1,\dots,t_n) \in \Delta^{n-1}$ and it is understood that $t_ix_i$ may be omitted from the notation if either $X_i$ is empty or if $t_i=0$, and two such brackets represent the same point if they agree except in the entries where $t_i=0$.
\end{void}

Identify $\Delta^1$ with the unit interval $I$ via $I \xto{t \mapsto (t,1-t)} \Delta^1$.
Then the join $XY$ of compact Hausdorff spaces 
fits in a pushout diagram
\begin{equation}\label{E:join pushout}
\vcenter{
\xymatrix{
(X \times Y) \coprod (X \times Y) \ar@{^(->}[r]^(0.6){i_0 + i_1} \ar[d]_{\pi_Y \coprod \pi_X} &
X \times Y \times I \ar[d]^{\pi} 
\\
Y \coprod X \ar@{^(->}[r] & X*Y
}
}
\end{equation}

If $X_1,\dots,X_n$ are $G$-spaces then their join is also a $G$-space via the diagonal action.
Given a $G$-space $Z$ we obtain a functor $X \mapsto XZ$ from $G-\cgwh$ to itself.
This functor is, in fact, continuous in the sense that for any $G$-spaces $X,Y$ the resulting natural map
\begin{equation}\label{E:def JZ}
J_Z \colon F(X,Y) \xto{f \mapsto f*\id_Z} F(XZ,YZ)
\end{equation}
is a continuous map.
We can describe it explicitly: for any $f \in F(X,Y)$
\begin{equation}\label{E:JZ explicit formula}
J_Z(f)([sx,(1-s)z])=[s \cdot f(x),(1-s)z].
\end{equation}
One easily checks that $J_Z$ is  $G$-equivariant and passage to fixed points gives
\[
J_Z \colon \map^G(X,Y) \xto{f \mapsto f*\id_Z} \map^G(XZ,YZ)
\]
If $X,Y,Z$ are compact there are well known natural ``associativity'' homeomorphisms 
\begin{eqnarray} \label{E:associativity isom XYZ}
&& (XY)Z \xto{\ \ [s[tx,(1-t)y],(1-s)z] \mapsto [stx,s(1-t)y,(1-s)z] \ \ }  XYZ \\
\nonumber
&& X(YZ) \xto{\ [tx,(1-t)[sy,(1-s)z]] \mapsto [tx,s(1-t)y,(1-s)(1-t)z] \ }  XYZ
\end{eqnarray}
This allows us to identify, for example, $\map^G(AY,XY) \xto{J_Z} \map^G((AY)Z,(XY)Z)$ with $\map^G(AY,XY) \xto{J_Z} \map^G(AYZ,XYZ)$.

By inspection, these homeomorphisms together with \eqref{E:JZ explicit formula} imply the commutativity of the following diagrams for compact $G$-spaces $A,T,Y,Z$.
\begin{equation}\label{E:JZ incl}
\vcenter{
\xymatrix{
\map^G(A,T) \ar[r]^{J_Z} \ar[dr]_{\incl_*} & \map^G(AZ,TZ) \ar[d]^{i^*} \\
& \map^G(A,TZ)
}
\xymatrix{
\map^G(AY,T) \ar[r]^{i_Y^*} \ar[d]_{J_Z} &
\map^G(Y,T) \ar[d]^{J_Z} 
\\
\map^G(AYZ,TZ) \ar[r]^{i_{YZ}^*} &
\map^G(YZ,TZ)
}
}
\end{equation}
where $\incl \colon T \xto{t \mapsto [1\cdot t,0z]} TZ$ and $i \colon A \xto{a \mapsto [1 \cdot a,0z]} AZ$ are the inclusions and we used the homeomorphism $(AY)Z \cong AYZ$.

\begin{defn}\label{D:def psiAXYZ}
Let $A,X,Y,Z$ be compact $G$-spaces.
Let
\[
\psi_{A,X,Y,Z} \colon \map^G(A \times X,Y) \to \map^G(A \times XZ,YZ)
\]
be the unique map which renders the following diagram commutative
\[
\vcenter{
\xymatrix{
\map^G(A,F(X,Y)) \ar[rr]^{\map^G(A,J_Z)} \ar[d]_{\cong} & &
\map^G(A,F(XZ,YZ)) \ar[d]^{\cong} 
\\
\map^G(A \times X, Y) \ar[rr]_{\psi_{A,X,Y,Z}} & &
\map^G(A \times XZ,YZ). 
}
}
\]
\end{defn}

It is clear that $\psi$ is natural in $A$. 
By inspection
\begin{equation}\label{E:psi explicit formula}
\psi_{A,X,Y,Z}(f)(a,[sx,(1-s)z]) = [s \cdot f(a,x),(1-s)z].
\end{equation}

The remainder of this section is devoted to the definition and study of two maps 
\[
\alpha, \beta \colon A \times YZ \times I \to AYZ,
\]
of which the second is simply the quotient onto $A(YZ) \cong AYZ$.
They arise in the computations in Section \ref{Sec:stabilization} in the context of the homeomorphisms \eqref{E:associativity isom XYZ}, and the next definition is our starting point.
Recall that $\Delta^2$ denotes the standard $2$-simplex in $\RR^3$.
For $i=0,1,2$ let $\partial_i \Delta^2$ denote the $i$th face of $\Delta^2$, i.e the elements $(t_0,t_1,t_2) \in \Delta^2$ with $t_i=0$.

\begin{defn}\label{D:def tilde alpha and beta}
Let $\tilde{\alpha}, \tilde{\beta} \colon I \times I \to \Delta^2$ be the functions
\begin{eqnarray*}
&& \tilde{\alpha} (s,t) \, = \, \left(st,s(1-t),1-s\right) \\
&& \tilde{\beta}  (s,t) \, = \, \left(t,s(1-t),(1-s)(1-t)\right).
\end{eqnarray*}
\end{defn}

Both maps are clearly surjective, so for any $0 \leq s,t, \leq 1$ there exist $0 \leq s',t' \leq 1$ such that $\tilde{\beta}(s,t)=\tilde{\alpha}(s',t')$.

\begin{prop}\label{P:s' and t'}
Define functions $s',t' \colon I \times I \to I$ as follows.
\begin{eqnarray*}
& s'(s,t) =s+t-st & \\
& t'(s,t) = 
\left\{\begin{array}{rl}
0 & \text{if } s=t=0 \\
\frac{t}{s+t-st} & \text{if } (s,t) \neq (0,0).
\end{array}\right.
&
\end{eqnarray*}
\begin{enumerate}[label=(\alph*)]
\item 
$s'$ is a continuous function, and $t'$ is continuous away from $(0,0)$.
\label{P:s't':cts}

\item
$0 \leq s',t' \leq 1$
\label{P:s't':bounded}

\item 
$\tilde{\beta}=\tilde{\alpha} \circ (s',t')$.
\label{P:s't':alpha-beta}

\item
$s'(0,t)=t$ and $t'(s,0)=0$ and $t'(s,1)=1$. 
Also $t'(0,t)=1$ for all $t \neq 0$.
\label{P:s't':computations}
\end{enumerate}
\end{prop}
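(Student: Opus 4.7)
The plan is to first derive the formulas for $s'$ and $t'$ from the defining relation $\tilde{\beta}(s,t) = \tilde{\alpha}(s',t')$, and then verify the four assertions by direct algebra. Equating coordinates in
\[
\tilde{\alpha}(s',t') = \bigl(s't',\, s'(1-t'),\, 1-s'\bigr) = \bigl(t,\, s(1-t),\, (1-s)(1-t)\bigr) = \tilde{\beta}(s,t),
\]
the third coordinate forces $s' = 1-(1-s)(1-t) = s+t-st$, and the first forces $s't' = t$, hence $t' = t/s'$ whenever $s' \neq 0$. Since $s' = s + t(1-s)$ vanishes on $[0,1]^2$ only at $(s,t)=(0,0)$, the convention $t'(0,0)=0$ fills in the remaining value. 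This derivation explains the explicit definitions and makes part (c) essentially automatic.

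For (a), $s'$ is a polynomial, hence continuous, and $t' = t/s'$ is continuous away from $(0,0)$ because $s'$ is nonzero there. For (b), the identity $s' = s + t(1-s)$ displays $s'$ as a convex combination of $s$ and $1$, so $0 \leq s \leq s' \leq 1$; the symmetric rewriting $s' = t + s(1-t) \geq t$ then yields $t' = t/s' \leq 1$, while $t' \geq 0$ is clear.

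For (c), I would substitute $s' = s+t-st$ and $t' = t/s'$ into each coordinate of $\tilde{\alpha}(s',t')$: the first is $s't' = t$ by construction, the third is $1-s' = (1-s)(1-t)$, and the second is $s'(1-t') = s' - s't' = (s+t-st) - t = s(1-t)$, matching $\tilde{\beta}(s,t)$ coordinate by coordinate. Part (d) is a direct substitution: $s'(0,t) = t$; $t'(s,0) = 0/s = 0$ when $s \neq 0$ and $0$ by convention when $s=0$; $t'(s,1) = 1/(s+1-s) = 1$; and $t'(0,t) = t/t = 1$ for $t \neq 0$.

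Every step is elementary, so there is no genuine obstacle; the role of this proposition is simply to record the explicit reparametrization and its boundary behaviour for use in Section~\ref{Sec:stabilization}. The only mildly delicate point worth flagging is that $t'$ is genuinely discontinuous at $(0,0)$: along the positive $t$-axis it equals $1$, while at the origin it is defined to be $0$, so the continuity clause in (a) is as strong as one could hope for.
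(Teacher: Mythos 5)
Your proof is correct and follows essentially the same route as the paper's: the paper establishes the bounds via the identities $s' = 1-(1-s)(1-t)$ and $s' = t + s(1-t) \geq t$, observes that the denominator of $t'$ vanishes only at the origin, and leaves parts (c) and (d) to inspection, which is exactly what you carry out. Your added derivation of where $s'$ and $t'$ come from (equating $\tilde\alpha(s',t')$ with $\tilde\beta(s,t)$ coordinate by coordinate) is a pleasant motivational gloss but not a genuinely different method.
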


\begin{proof}
First, $s+t-st=1-(1-s)(1-t)$.
This shows that $0 \leq s' \leq 1$ and that the denominator in the formula for $t'$ vanishes if and only if $s=t=0$.
This shows that $t'$ is well defined and that it is continuous away from $(0,0)$.
The continuity of $s'$ is clear
Also, $s+t-st=t+s(1-t) \geq t$ which shows that $0 \leq t' \leq 1$.
This proves items \ref{P:s't':cts} and \ref{P:s't':bounded}.
Items \ref{P:s't':alpha-beta} and \ref{P:s't':computations} follow by inspection of the formulas.
\end{proof}

Of course, the maps $\tilde{\alpha}$ and $\tilde{\beta}$ are homotopic for trivial reasons.
But we will need an explicit homotopy satisfying some conditions.
Given a homotopy $h \colon I \times X \to Y$ we will write $h_p \colon X \to Y$ for the restriction of $h$ to $\{p \} \times X$ where $0 \leq p \leq 1$.

\begin{prop}\label{P:tilde theta}
There exists a homotopy $\tilde{\theta} \colon I^2  \times I \to \Delta^2$ from $\tilde{\alpha}$ to $\tilde{\beta}$, written as a family of maps $\tilde{\theta}_p \colon I^2 \to \Delta^2$ parameterized by $0 \leq p \leq 1$,  with the properties
\[
\begin{array}{lcl}
\tilde{\theta}_p(\{0\} \times I) \subseteq \partial_1 \Delta^2 & , &
\tilde{\theta}_p(\{1\} \times I) \subseteq \partial_2 \Delta^2 
\\
\tilde{\theta}_p(I \times \{0\}) \subseteq \partial_0 \Delta^2 & , &
\tilde{\theta}_p(I \times \{1\}) \subseteq \partial_1 \Delta^2 
\end{array}
\]
\end{prop}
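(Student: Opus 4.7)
The key observation is that $\Delta^{2}$ is a convex subset of $\RR^{3}$, so any two maps into $\Delta^{2}$ are joined by a straight-line homotopy that stays inside $\Delta^{2}$. My proposal is therefore to take
\[
\tilde{\theta}_{p}(s,t) \;\defeq\; (1-p)\,\tilde{\alpha}(s,t) \,+\, p\,\tilde{\beta}(s,t),\qquad 0\le p\le 1,
\]
and verify that the four boundary conditions are satisfied on the nose.

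Carrying this out, one computes
\[
\tilde{\theta}_{p}(s,t) \;=\; \bigl(\,t\bigl((1-p)s+p\bigr),\; s(1-t),\; (1-s)(1-pt)\,\bigr).
\]
A quick check that the three coordinates sum to $1$ and are nonnegative confirms the image lies in $\Delta^{2}$, and the specializations $p=0,1$ recover $\tilde{\alpha},\tilde{\beta}$ respectively by Definition \ref{D:def tilde alpha and beta}.

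The remaining work is to inspect the four edges of $I^{2}$:
\begin{itemize}
\item At $s=0$: $\tilde{\theta}_{p}(0,t)=(pt,\,0,\,1-pt)$, whose middle coordinate vanishes, so the image lies in $\partial_{1}\Delta^{2}$.
\item At $s=1$: $\tilde{\theta}_{p}(1,t)=(t,\,1-t,\,0)$, lying in $\partial_{2}\Delta^{2}$.
\item At $t=0$: $\tilde{\theta}_{p}(s,0)=(0,\,s,\,1-s)$, lying in $\partial_{0}\Delta^{2}$.
\item At $t=1$: $\tilde{\theta}_{p}(s,1)=((1-p)s+p,\,0,\,(1-s)(1-p))$, lying in $\partial_{1}\Delta^{2}$.
\end{itemize}

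There is no real obstacle here: convexity does all the heavy lifting, and each of the four boundary identities holds coordinate-wise for every $p\in I$ because the corresponding coordinate already vanishes on both $\tilde{\alpha}$ and $\tilde{\beta}$, hence on their convex combination. The only mild point to notice is that at $(s,t)=(1,1)$ one needs the middle coordinate to be zero simultaneously from the $s=1$ face (where the third coordinate vanishes) and the $t=1$ face (where the middle vanishes); both conditions are compatible with being in $\partial_{2}$ respectively $\partial_{1}$, so there is no conflict at the corners.
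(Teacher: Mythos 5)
Your proof is correct, and it is considerably simpler than the argument in the paper. You observe that $\tilde{\alpha}$ and $\tilde{\beta}$ already satisfy all four boundary conditions on the nose (each condition is the vanishing of one barycentric coordinate, which is a convex constraint), so the straight-line homotopy inherits them automatically. Your edge-by-edge computation confirms this, and the displayed coordinate formula for $\tilde{\theta}_p$ is correct.

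The paper proceeds quite differently: it introduces auxiliary functions $s',t' \colon I^2 \to I$ (with a removable singularity at the origin), builds two homotopies $H_p$ and $K_p$ out of them, checks the boundary conditions for each, and then concatenates. That route produces a homotopy passing through the intermediate map $(s,t) \mapsto (st',s(1-t'),1-s)$ and requires a small continuity argument at $(0,0)$, where $t'$ itself fails to be continuous. Your argument sidesteps both the intermediate construction and the singularity: the linear homotopy is manifestly continuous everywhere, and convexity of the faces $\partial_i\Delta^2$ does all the work. Since Proposition \ref{P:tilde theta} only asserts existence of \emph{some} homotopy with these boundary constraints, and nothing downstream (Definition \ref{D:Thetap homotopy}, Propositions \ref{P:def alpha and beta via Theta}--\ref{P:more on alpha beta restricted}, Claim 5 of Proposition \ref{P:stabilisation key to WH reduction}) uses the particular intermediate stage of the paper's concatenation, your simpler homotopy is a fully valid replacement.
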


\begin{proof}
Define functions $S,T \colon I \times I^2 \to I$ by 
\begin{eqnarray*}
&& T(p,s,t) = p \cdot t'(s,t)+(1-p)t,  \\
&& S(p,s,t)=p \cdot s'(s,t)+(1-p)s.
\end{eqnarray*}
It is clear from Proposition \ref{P:s' and t'}\ref{P:s't':bounded} and \ref{P:s't':cts} that $0 \leq S,T \leq 1$ and that $S$ is continuous and $T$ is continuous away from $I \times \{(0,0)\}$.
Define functions $H,K \colon I \times I^2 \to \Delta^2$ as follows, where we write $S, T$ instead of $S(p,s,t)$ and $T(p,s,t)$, and $t'$ instead of $t'(s,t)$
\begin{eqnarray*}
&& H(p,s,t)=\left(s \cdot T,s\cdot (1-T),1-s\right) \\
&& K(p,s,t)=\left(S\cdot t',S\cdot(1-t'),1-S\right).
\end{eqnarray*}
They are well defined since by Propositions \ref{P:s' and t'}\ref{P:s't':bounded} $0 \leq t' \leq 1$ and we have seen that $0\leq S,T \leq 1$.
They are continuous away from $I \times \{(0,0)\}$ because $S,T$ are and $t'$ is continuous away from $(0,0)$.
Also, it follows from Proposition \ref{P:s' and t'}\ref{P:s't':computations} that
\[
S_p(0,t)=pt \text{ and } 
S_p(1,t)=1 \text{ and }
T_p(s,0)=0 \text{ and }
T_p(s,1)=1.
\]
One then checks that
\[
\begin{array}{lcl}
H_p(\{0\} \times I) \subseteq \partial_1 \Delta^2 & , & 
K_p(\{0\} \times I) \subseteq \partial_1 \Delta^2 
\\
H_p(\{1\}\times I)  \subseteq \partial_2 \Delta^2 & , &
K_p(\{1\} \times I) =\subseteq \partial_2 \Delta^2 
\\
H_p(I \times \{0\})  \subseteq  \partial_0 \Delta^2 &, &  
K_p(I \times \{0\}) \subseteq \partial_0 \Delta^2 
\\
H_p(I \times \{1\})  \subseteq  \partial_1 \Delta^2 & , &
K_p(I \times \{1\})  \subseteq \partial_1 \Delta^2.
\end{array}
\]
Inspection of the definition of $H$ and $K$ gives
\begin{eqnarray*}
&& H_0(s,t)=(st,s(1-t),1-s)=\tilde{\alpha}(s,t) \\
&& H_1(s,t)=(st',s(1-t'),1-s) \\
&& K_0(s,t)=(st',s(1-t'),1-s)= H_1(s,t) \\
&& K_1(s,t) = (s',t',s'(1-t'),1-s') = (\tilde{\alpha} \circ (s',t'))(s,t) = \tilde{\beta}(s,t).
\end{eqnarray*}
Therefore the homotopies $H_p$ and $K_p$ can be concatenated to form a homotopy $\tilde{\Theta} \colon I \times I^2 \to \Delta^2$ from $\tilde{\alpha}$ to $\tilde{\beta}$ with the properties in the statement of this proposition.
\end{proof}

\begin{defn}\label{D:Thetap homotopy}
Let $A,Y,Z$ be {\em compact Hausdorff} spaces and assume that $Y,Z$ are not empty.
Let 
\begin{eqnarray*}
&& q_{A,Y,Z} \colon A \times Y \times Z \times \Delta^2 \to AYZ 
\qquad \text{ and }  \\
&& q_{Y,Z} \colon Y \times Z \times I \to YZ
\end{eqnarray*}
be the restriction of the quotient maps \eqref{E:join quotient}.
Since $Y,Z \neq \emptyset$ the second map is a quotient map.
Compactness of all spaces implies that $A \times q_{Y,Z} \times I$ in the left vertical map in the diagram below is a quotient map too.
It can be described explicitly by the formula
\[
(a,y,z,s,t) \mapsto (a,[sy,(1-s)z],t).
\]
By Proposition \ref{P:tilde theta} and inspection of the formula above, for any $0 \leq p \leq 1$ the composition of the top horizontal arrow with the vertical arrow on the right respects the quotient map on the left.
We finally define $\Theta \colon (A \times YZ \times I) \times I \to AYZ$ to be the homotopy whose fibres $\Theta_p$ are the unique maps which render the following diagram commutative.
\[
\xymatrix{
A \times Y \times Z \times I\times I  \ar[rr]^{A \times Y \times Z \times \tilde{\Theta}_p} \ar[d]_{A \times q_{Y,Z} \times I} & &
A \times Y \times Z \times \Delta^2   \ar[d]^{q_{A,Y,Z}} 
\\
A  \times YZ \times I \ar@{-->}[rr]_{\Theta_p} & &
AYZ 
}
\]
\end{defn}

\begin{defn}\label{D:alpha and beta}
Let $\alpha, \beta \colon A \times YZ \times I \to AYZ$ be the maps $\alpha=\theta|_{p=0}$ and $\beta=\theta|_{p=1}$.
\end{defn}

\begin{prop}\label{P:def alpha and beta via Theta}
The maps $\alpha$ and $\beta$ are homotopic and have the explicit formula
\begin{eqnarray*}
&& \alpha \left(a,[sy,(1-s)z],t\right) =  \left[sta,s(1-t)y,(1-s)z\right]  \\
&& \beta \left(a,[sy,(1-s)z],t\right) = \left[ta,s(1-t)y,(1-s)(1-t)z\right] 
\end{eqnarray*}
\end{prop}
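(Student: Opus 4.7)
The statement has two parts: that $\alpha$ and $\beta$ are homotopic, and the explicit formulas for them. My plan is to handle the homotopy first (essentially for free) and then to evaluate the defining commutative square at $p=0$ and $p=1$ to read off the two formulas.

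For the homotopy part, the map $\Theta$ constructed in Definition \ref{D:Thetap homotopy} is by construction a homotopy from $\Theta_0 = \alpha$ to $\Theta_1 = \beta$. One only has to check that the construction of $\tilde{\Theta}_p$ in Proposition \ref{P:tilde theta} really does begin at $\tilde{\alpha}$ when $p=0$ and end at $\tilde{\beta}$ when $p=1$; this was already verified inside the proof of that proposition (via the explicit values $H_0 = \tilde{\alpha}$ and $K_1 = \tilde{\beta}$). Thus $\alpha \simeq \beta$ via $\Theta$ itself, with no further work.

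For the formulas, I would simply chase a representative point around the defining square of Definition \ref{D:Thetap homotopy}. Pick any preimage $(a,y,z,s,t) \in A \times Y \times Z \times I \times I$ of $(a,[sy,(1-s)z],t)$ under the quotient $A \times q_{Y,Z} \times I$. At $p=0$, the top map sends this to $(a,y,z,\tilde{\alpha}(s,t)) = (a,y,z,(st,s(1-t),1-s))$ by Definition \ref{D:def tilde alpha and beta}. Applying the quotient $q_{A,Y,Z}$ and using the convention for the join coordinates \eqref{E:join quotient} yields
\[
\alpha(a,[sy,(1-s)z],t) = [sta,\,s(1-t)y,\,(1-s)z],
\]
as desired. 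Identically, at $p=1$ the top map produces $(a,y,z,\tilde{\beta}(s,t)) = (a,y,z,(t,s(1-t),(1-s)(1-t)))$, and applying $q_{A,Y,Z}$ gives
\[
\beta(a,[sy,(1-s)z],t) = [ta,\,s(1-t)y,\,(1-s)(1-t)z].
\]

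The only subtle point, and the one I would write out carefully, is that these formulas are well-defined on the equivalence classes in $YZ$: when $s=0$ or $s=1$ the representative $(y,z,s)$ is not unique, so one must check that the outputs of $\alpha$ and $\beta$ do not depend on the choice of lift. This follows from the equivalence relation defining the join together with the boundary behaviour of $\tilde{\alpha}$ and $\tilde{\beta}$ recorded in Definition \ref{D:def tilde alpha and beta} (e.g. at $s=0$ the first two coordinates of $\tilde{\alpha}$ vanish, collapsing the $a$ and $y$ entries in the output), which is exactly what made $\Theta_p$ well-defined in the first place. Beyond that, the proof is pure formula verification and should be brief.
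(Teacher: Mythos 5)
Your proof is correct and matches the paper's argument: the homotopy is $\Theta_p$ by construction, and the formulas follow by evaluating the defining square at $p=0,1$ using the explicit forms of $\tilde\alpha,\tilde\beta$. The paper's own proof is a one-line version of exactly this; your extra paragraph on well-definedness of the passage to the quotient is a sound expansion of what Definition \ref{D:Thetap homotopy} already guarantees.
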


\begin{proof}
The homotopy is provided by $\theta_p$.
The formulas are immediate from the explicit description of $\tilde{\alpha}$ and $\tilde{\beta}$ in Definition \ref{D:def tilde alpha and beta} and Proposition \ref{P:tilde theta}.
\end{proof}

The explicit formulas for $\alpha, \beta$ in Proposition \ref{P:def alpha and beta via Theta} give the next straightforward calculation.

\begin{prop}\label{P:restrictions of alpha and beta}
\begin{enumerate}
\item 
The restriction $\alpha|_{A \times YZ \times \{0\}} \colon A\times YZ \to AYZ$ is the composition $A \times YZ \xto{\proj} YZ \xto{\incl} AYZ$.
\label{P:restrictions of alpha and beta:alpha0}

\item
The restriction $\alpha|_{A \times YZ \times \{1\}} \colon A\times YZ \to AYZ$ factors through the inclusion $AZ \subseteq AYZ$ and is given by  $(a,[sy,(1-s)z]) \mapsto [sa,(1-s)z]$.
\label{P:restrictions of alpha and beta:alpha1}

\item
The restriction $\beta|_{A \times YZ \times \{1\}} \colon A\times YZ \to AYZ$  is the composition $A \times YZ \xto{\proj} A \xto{\incl} AYZ$.
\label{P:restrictions of alpha and beta:beta1}
\end{enumerate}
\end{prop}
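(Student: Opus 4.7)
The plan is to proceed by direct substitution into the explicit formulas for $\alpha$ and $\beta$ given in Proposition \ref{P:def alpha and beta via Theta}, using the convention recorded in Section \ref{Sec:join} that an entry $t_i x_i$ in a join coordinate may be dropped whenever $t_i = 0$.

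For part \ref{P:restrictions of alpha and beta:alpha0}, I substitute $t=0$ into $\alpha(a,[sy,(1-s)z],t) = [sta,s(1-t)y,(1-s)z]$. The first coordinate becomes $0 \cdot a$, which may be omitted, leaving $[sy,(1-s)z] \in YZ \subseteq AYZ$. This is manifestly the image of $(a,[sy,(1-s)z])$ under the projection $A \times YZ \to YZ$ followed by the inclusion $YZ \hookrightarrow AYZ$ specified in \eqref{E:JZ incl}.

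For part \ref{P:restrictions of alpha and beta:alpha1}, I substitute $t=1$, obtaining $[s \cdot a, 0 \cdot y, (1-s)z] = [sa,(1-s)z]$, which lies in $AZ$; the claimed factorization through $AZ \subseteq AYZ$ and the explicit formula follow at once. For part \ref{P:restrictions of alpha and beta:beta1}, I substitute $t=1$ into $\beta(a,[sy,(1-s)z],t) = [ta,s(1-t)y,(1-s)(1-t)z]$ to get $[1 \cdot a, 0 \cdot y, 0 \cdot z] = [a]$, i.e.\ the image of $a \in A$ under $A \hookrightarrow AYZ$, which is exactly projection onto $A$ composed with inclusion.

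There is no substantial obstacle: each case is a one-line computation once the formulas of Proposition \ref{P:def alpha and beta via Theta} are in hand, and the only subtlety is the bookkeeping convention that coordinates with zero weight are suppressed when passing between the quotient $\coprod_\sigma \Delta^{|\sigma|-1} \times X_\sigma / \sim$ and the join $AYZ$.
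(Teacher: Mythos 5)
Your computation is correct and follows exactly the route the paper intends: the paper offers no written proof beyond the remark that the statement is a "straightforward calculation" from the explicit formulas for $\alpha$ and $\beta$ in Proposition \ref{P:def alpha and beta via Theta}, and your three substitutions $t=0$, $t=1$, $t=1$ with the zero-weight-coordinate suppression convention carry this out. (Minor nitpick: the inclusion $YZ \hookrightarrow AYZ$ in part (1) is the face inclusion of the join, not quite the map $\incl \colon T \to TZ$ spelled out after \eqref{E:JZ incl}; this does not affect the correctness of the argument.)
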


The following facts are again straightforward calculations:

\begin{prop}\label{P:more on alpha beta restricted}
Let $A,X,Y$ be compact Hausdorff $G$-spaces, $X,Y \neq \emptyset$.
Recall the maps $J_Z$ and $\psi$ from \eqref{E:def JZ} and Definition \ref{D:def psiAXYZ}, and let $\pi \colon A  \times Y \times I \to AY$ be the restriction of the quotient map \eqref{E:join quotient}.
Then the following diagrams commute.
\begin{equation}\label{E:JZ alpha}
\xymatrix{
\map^G(AY,XY) \ar[d]^{J_Z} \ar[r]^{\pi^*} &
\map^G(A  \times Y\times I, XY)  \ar[r]^\cong &
\map^G(A  \times I \times Y, XY) \ar[d]^{\psi_{A \times I,Y,XY,Z}} 
\\
\map^G(AYZ,XYZ) \ar[r]^{\alpha^*} &
\map^G(A \times YZ \times I,XYZ) \ar[r]^\cong &
\map^G(A \times I \times YZ ,XYZ)
}
\end{equation}
\begin{equation}\label{E:JZ alpha0}
\xymatrix{
\map^G(Y,XY) \ar[r]^{J_Z} \ar[d]_{\pi^*} &
\map^G(YZ,XYZ) \ar[d]^{(\alpha|_{A \times YZ \times \{0\}})^*}
\\
\map^G(A \times Y, XY) \ar[r]_{\psi_{A,Y,XY,Z}} &
\map^G(A \times YZ,XYZ)
}
\end{equation}
\begin{equation}\label{E:JZ alpha1}
\xymatrix{
\map^G(A,XY) \ar[r]^{J_Z} \ar[d]_{\pi^*} &
\map^G(AZ,XYZ) \ar[d]^{(\alpha|_{A \times YZ \times \{1\}})^*}
\\
\map^G(A \times Y, XY) \ar[r]_{\psi_{A,Y,XY,Z}} &
\map^G(A \times YZ,XYZ)
}
\end{equation}
\begin{equation}\label{E:JZ beta1}
\xymatrix{
\map^G(A,XY) \ar[r]^{J_Z} \ar[d]_{\incl_*} &
\map^G(AZ,XYZ) \ar[d]^{(\beta|_{A \times YZ \times \{1\}})^*} \ar[dl]^{i^*}
\\
\map^G(A,XY) \ar[r]_{\pi^*} &
\map^G(A \times YZ,XYZ)
}
\end{equation}
\end{prop}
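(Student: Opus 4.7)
The plan is to verify each of the four diagrams \eqref{E:JZ alpha}--\eqref{E:JZ beta1} by a direct element chase, chasing a test map $f$ in the upper-left corner both ways around the square and comparing values on a general point. All the building blocks have explicit formulas: \eqref{E:JZ explicit formula} for $J_Z$, \eqref{E:psi explicit formula} for $\psi$, the formulas of Proposition \ref{P:def alpha and beta via Theta} for $\alpha$ and $\beta$, and the formula $\pi(a,y,t)=[ta,(1-t)y]$ for the quotient map (obtained from \eqref{E:join pushout} and the identification $I\cong\Delta^1$ via $t\mapsto(t,1-t)$). The diagrams of \eqref{E:associativity isom XYZ} will also be used silently so that an element $[sta,s(1-t)y,(1-s)z]\in AYZ$ may be regrouped as $[s\cdot[ta,(1-t)y],(1-s)z]\in (AY)Z$ before $J_Z(f)$ is applied.

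For \eqref{E:JZ alpha}, take $f\in\map^G(AY,XY)$ and a point $(a,[sy,(1-s)z],t)$ in $A\times YZ\times I$. The bottom path evaluates $J_Z(f)$ at $\alpha(a,[sy,(1-s)z],t)=[sta,s(1-t)y,(1-s)z]=[s\cdot[ta,(1-t)y],(1-s)z]$, giving $[s\cdot f([ta,(1-t)y]),(1-s)z]$. The top path swaps $Y$ with $I$, turning $f\circ\pi$ into $(a,t,y)\mapsto f([ta,(1-t)y])$, and then $\psi_{A\times I,Y,XY,Z}$ sends it (using \eqref{E:psi explicit formula}) to $(a,t,[sy,(1-s)z])\mapsto[s\cdot f([ta,(1-t)y]),(1-s)z]$. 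The two outputs agree, so \eqref{E:JZ alpha} commutes.

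Diagrams \eqref{E:JZ alpha0}, \eqref{E:JZ alpha1}, \eqref{E:JZ beta1} are handled in the same spirit using the descriptions of $\alpha|_{A\times YZ\times\{0\}}$, $\alpha|_{A\times YZ\times\{1\}}$ and $\beta|_{A\times YZ\times\{1\}}$ provided by Proposition \ref{P:restrictions of alpha and beta}. For \eqref{E:JZ alpha0}, $f\in\map^G(Y,XY)$ yields along the top $[s\cdot f(y),(1-s)z]$ at the point $(a,[sy,(1-s)z])$, which matches $\psi_{A,Y,XY,Z}(f\circ\pi_Y)(a,[sy,(1-s)z])$. For \eqref{E:JZ alpha1}, the restriction $\alpha|_{p=1}$ sends $(a,[sy,(1-s)z])$ to $[sa,(1-s)z]\in AZ\subseteq AYZ$, so $J_Z(f)$ outputs $[s\cdot f(a),(1-s)z]$, matching $\psi_{A,Y,XY,Z}(f\circ\pi_A)(a,[sy,(1-s)z])$. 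For \eqref{E:JZ beta1}, Proposition \ref{P:restrictions of alpha and beta}\ref{P:restrictions of alpha and beta:beta1} collapses $\beta|_{p=1}$ to $\incl\circ\pi_A$, which after composing with $J_Z(f)$ reduces (via the first diagram of \eqref{E:JZ incl}) to $\incl_*(f)\circ\pi$ on one side and to $\pi^*\incl_*(f)$ on the other.

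There is no real obstacle here: once one keeps careful track of the associativity identifications $A\times(YZ)\leftrightarrow(A\times Y)Z$ versus $(AY)Z\leftrightarrow A(YZ)\leftrightarrow AYZ$ and of the convention for $\pi$, each diagram reduces to equality of two explicit expressions in $\Delta^2$-coordinates. The only mild subtlety is ensuring that the regrouping of join coordinates that makes $J_Z(f)$ applicable matches the formula for $\alpha$ or $\beta$ on the given face; this is precisely what Proposition \ref{P:restrictions of alpha and beta} was designed to record.
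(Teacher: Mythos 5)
Your proposal is correct and takes essentially the same approach as the paper's proof: a direct element chase using the explicit formulas \eqref{E:JZ explicit formula}, \eqref{E:psi explicit formula}, Proposition \ref{P:def alpha and beta via Theta}, and Proposition \ref{P:restrictions of alpha and beta}, with \eqref{E:JZ beta1} reduced to \eqref{E:JZ incl}. The only stylistic difference is that for \eqref{E:JZ alpha0} the paper invokes naturality of $\psi$ with respect to $A\to *$ and the identity $J_Z=\psi_{*,Y,XY,Z}$ rather than evaluating on a test point, but this is an immaterial variation.
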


%
\begin{proof}
To check \eqref{E:JZ alpha} we use the formula for $\alpha$ in Proposition \ref{P:def alpha and beta via Theta}, for $J_Z$ in \eqref{E:JZ explicit formula} and for $\psi$ in \eqref{E:psi explicit formula}, to calculate
\begin{multline*}
\alpha^* J_Z(f) (a,t,[sy,(1-s)z]) = 
J_Z(f)([st\cdot a,s(1-t)\cdot y,(1-s)\cdot z]) =
[s\cdot f([ta,(1-t)y]),(1-s)z] =
\\
[s\cdot (\pi^* f)(a,t,y),(1-s)z]=
\psi_{A \times I,Y,XY,Z}(\pi^* f)(a,t,[sy,(1-s)z]).
\end{multline*}
The commutativity of \eqref{E:JZ alpha0} follows from Proposition \ref{P:restrictions of alpha and beta}\ref{P:restrictions of alpha and beta:alpha0} , the naturality of $\psi$ with respect to $A \to *$, and the observation that $J_Z$ is $\psi_{*,Y,XY,Z}$. 
%

The commutativity of \eqref{E:JZ alpha1} follows by the following calculation which uses Proposition \ref{P:restrictions of alpha and beta}\ref{P:restrictions of alpha and beta:alpha1}, and equations \eqref{E:JZ explicit formula} and \eqref{E:psi explicit formula}.
\begin{multline*}
(\alpha_1^* J_Z(h))(a,t,[sy,(1-s)z]) = 
J_Z(h)([sa,[1-s)z]) =
\\
[s\cdot h(a),(1-s)z] = 
[s \cdot (\pi^* h)(a,y),(1-s)z] =
\psi_{A,Y,XY,Z}(\pi^* h)(a,[sy,(1-s)z]).
\end{multline*}
Finally, \eqref{E:JZ beta1} follows from \eqref{E:JZ incl} and Proposition \ref{P:restrictions of alpha and beta}\ref{P:restrictions of alpha and beta:beta1}.
\end{proof}

%
%
\section{Filtration of $G$-spaces}
\label{S:filtration}

Let $G$ be a finite group.
Let $(H)$ denote the conjugacy class of $H \leq G$.
Enumerate the conjugacy classes of $G$
\begin{equation}\label{E:enumerate subgroups}
(H_1), \cdots , (H_r)
\end{equation}
so that $|H_i| \geq |H_{i+1}|$.
In this way, if $H_i$ is conjugate to a proper subgroup of $H_j$ then $i>j$.

Let $X$ be a $G$-space.
Let $G_x$ denote the isotropy group of $x \in X$.
For any $0 \leq q \leq r$ set
\begin{equation}\label{E:def filtration X_q}
X_q = \{ x \in X : G_x \in (H_i) \text{ for some } 1 \leq i \leq q\}.
\end{equation}
We obtain a filtration of $X$, see \cite[Chap. I.6]{tomDieck87},
\[
\emptyset=X_0 \subseteq X_1 \subseteq \cdots \subseteq X_r=X.
\]
If $X$ is a $G$-CW complex then one checks that $X_q$ are subcomplexes \cite[Prop. II.1.12]{tomDieck87}.
The assignment $X \mapsto X_q$ is a functor giving rise to natural maps 
\[
\map^G(X,Y) \xto{ \ f \mapsto f|_{X_q} \ } \map^G(X_q,Y_q).
\]
This is because $f(X_q) \subseteq Y_q$ by the choice of the enumeration \eqref{E:enumerate subgroups}, see \cite[I.(6.3)]{tomDieck87}.

\begin{prop}\label{P:filtration facts G}
Let $X,Y$ be compact Hausdorff $G$-spaces.
\begin{enumerate}
\item
\label{P:filtration facts G:join-H}
If $H \leq G$ then $(XY)^H =X^HY^H$ (where $XY$ denotes the join).

\item
\label{P:filtration facts G:join-q}
For any $1 \leq q \leq r$ 
\[
(XY)_q \subseteq X_q Y_q \subseteq X_q Y \subseteq XY
\]

\item
\label{P:filtration facts G:fix H}
Set $H=H_q$ for some $1 \leq q \le r$.
Then
\begin{eqnarray*}
&& X_q^H=X^H \\
&& X_{q-1}^H  = \bigcup_{K\gneq H} X^K.
\end{eqnarray*}
\end{enumerate}
\end{prop}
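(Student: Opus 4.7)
My plan is to prove all three parts by direct analysis of isotropy groups, leveraging the explicit quotient description of the join~\eqref{E:join quotient} and the defining property of the enumeration~\eqref{E:enumerate subgroups}: a subgroup that strictly contains $H_q$ (up to conjugacy) must appear earlier in the list, i.e.\ it belongs to $(H_j)$ for some $j<q$.

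For~\ref{P:filtration facts G:join-H} I will unpack the equivalence relation of the join. The action of $h \in H$ sends $[tx,(1-t)y]$ to $[t(hx),(1-t)(hy)]$, and since coordinates carrying weight zero may be varied freely, the result agrees with $[tx,(1-t)y]$ iff $hx=x$ whenever $t>0$ and $hy=y$ whenever $t<1$. This identifies $(XY)^H$ with $X^H Y^H$ regarded as a subspace of $XY$, handling the corner cases (e.g.\ when $X^H=\emptyset$ all fixed points sit at $t=0$, recovering $Y^H=\emptyset * Y^H$) automatically.

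For~\ref{P:filtration facts G:join-q} I will compute the isotropy of a generic point $p=[tx,(1-t)y]$: namely $G_x \cap G_y$ when $0<t<1$, and $G_y$ or $G_x$ at the endpoints $t=0$ or $t=1$. If $p \in (XY)_q$, so $G_p \in (H_i)$ with $i \le q$, then each of $G_x$ and $G_y$ either equals $G_p$ (and is thus already in one of the first $q$ classes) or strictly contains it; in the latter case the enumeration property places the larger group in $(H_j)$ with $j<i \le q$. Hence $x \in X_q$ and $y \in Y_q$, proving $(XY)_q \subseteq X_q Y_q$. The remaining inclusions $X_q Y_q \subseteq X_q Y \subseteq XY$ are immediate from the monotonicity of the join in each variable.

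The statements in~\ref{P:filtration facts G:fix H} follow the same template with $H = H_q$ fixed. Any $x \in X^H$ has $G_x \supseteq H_q$; if $G_x$ equals $H_q$ then $x \in X_q$ directly, and if $G_x$ strictly contains $H_q$ then $G_x \in (H_j)$ for some $j<q$ by the enumeration property, so again $x \in X_q$. This yields $X^H \subseteq X_q$ and hence $X_q^H = X^H$. Restricting the same analysis to $X_{q-1}$ excludes only the class $(H_q)$ itself, leaving exactly the points with $G_x \supsetneq H_q$; taking $K=G_x$ exhibits this set as $\bigcup_{K \gneq H} X^K$. The only point requiring care throughout is that classes of equal order may appear in any internal order in the enumeration, but none of the arguments are sensitive to such ties, so I anticipate no serious obstacle beyond this bookkeeping.
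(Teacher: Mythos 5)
Your proof is correct and follows essentially the same route as the paper's: identify the isotropy group of a join point $[tx,(1-t)y]$ as $G_x \cap G_y$ (or $G_x$, $G_y$ at the endpoints), and combine this with the ordering property of the enumeration~\eqref{E:enumerate subgroups}, namely that a subgroup properly containing a conjugate of $H_i$ lies in a class $(H_j)$ with $j<i$. The paper states this more tersely, leaving items~\ref{P:filtration facts G:join-H} and~\ref{P:filtration facts G:join-q} as immediate consequences of the isotropy computation and deducing~\ref{P:filtration facts G:fix H} from the identity $X_q \setminus X_{q-1} = \{x : G_x \in (H_q)\}$ together with $x \in X^H \Leftrightarrow H \leq G_x$; your write-up simply makes those "easy" steps explicit.
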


\begin{proof}
Notice that the isotropy group of $[x,y,t] \in XY$ is $G_x$ if $t=1$ and $G_y$ if $t=0$ and $G_x \cap G_y$ if $0<t<1$.
From this items \ref{P:filtration facts G:join-H}  and \ref{P:filtration facts G:join-q} follow easily.

Set $H=H_q$ for some $1 \leq q \leq r$.
By construction 
\[
X_q \setminus X_{q-1} = \{x \in X :  G_x \in (H) \}.
\]
The choice of the enumeration \eqref{E:enumerate subgroups} implies item \ref{P:filtration facts G:fix H} since $x \in X^H$ if and only if $H \leq G_x$.
\end{proof}

Recall that if $X$ is a $G$-space and $H \leq G$ then $X^H$ admits an action of $WH=N_GH/H$.
This gives a functor $X \mapsto X^H$ from $G$-spaces to $WH$-spaces. 
There results a natural map
\begin{equation}\label{E:def res_G^H}
\map^G(X,Y) \xto{\ \res_G^H \colon f \mapsto f|_{X^H} \ } \map^{WH}(X^H,Y^H)
\end{equation}

\begin{prop}\label{P:JZ and WH}
Let $X,Y,Z$ be compact Hausdorff $G$-spaces.
\begin{enumerate}
\item
\label{P:JZ and WH:res_G^H and JZ} 
The join map $J_Z$ \eqref{E:def JZ} renders the following square commutative
\[
\xymatrix{
\map^G(X,Y) \ar[r]^{\res_G^H} \ar[d]_{J_Z} &
\map^{WH}(X^H,Y^H) \ar[d]^{J_{Z^H}} 
\\
\map^G(XZ,YZ) \ar[r]_(0.4){\res_G^H} & \map^{WH}((XZ)^H,(YZ)^H).
}
\]

\item
\label{P:JZ and WH:free action of WH on X-X'} 
Let $H \leq G$.
Then
\[
\Iso_{WH}(X^H,X_{q-1}^H) \subseteq \{e\}.
\]
\end{enumerate}
\end{prop}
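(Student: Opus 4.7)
The plan is to handle the two parts by direct unwinding of definitions, using the explicit formula \eqref{E:JZ explicit formula} for $J_Z$ and the fixed-point calculations of Proposition \ref{P:filtration facts G}.

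For part \ref{P:JZ and WH:res_G^H and JZ}, I would first note that Proposition \ref{P:filtration facts G}\ref{P:filtration facts G:join-H} identifies $(XZ)^H = X^H Z^H$ and $(YZ)^H = Y^H Z^H$, so both compositions in the square land in $\map^{WH}(X^H Z^H, Y^H Z^H)$. Given $f \in \map^G(X,Y)$ and a representative point $[sx,(1-s)z] \in X^H Z^H$ (with $x \in X^H$, $z \in Z^H$), formula \eqref{E:JZ explicit formula} computes both
\[
\bigl(J_Z(f)\bigr)^H([sx,(1-s)z]) = [s \cdot f(x),(1-s)z] = J_{Z^H}(f^H)([sx,(1-s)z]),
\]
using that $f(x) = f^H(x) \in Y^H$ and that the join formula is the same whether interpreted inside $XZ$ or inside $X^H Z^H$. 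Commutativity on the remaining subspaces $X^H$ and $Z^H$ of $X^H Z^H$ is the same check with $s=1$ or $s=0$. This finishes the square.

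For part \ref{P:JZ and WH:free action of WH on X-X'}, I set $H = H_q$ (the statement is meaningful precisely for such $H$) and take any $x \in X^H \setminus X_{q-1}^H$; the goal is to show that the $WH$-stabilizer $(WH)_x$ is trivial. The key step is to pin down $G_x$ exactly. Since $x \in X^H$ we have $H \leq G_x$. Conversely, by Proposition \ref{P:filtration facts G}\ref{P:filtration facts G:fix H} we have $X_{q-1}^H = \bigcup_{K \gneq H} X^K$, so if $H$ were strictly contained in $G_x$ then choosing $K = G_x$ would place $x$ in $X^K \subseteq X_{q-1}^H$, a contradiction. Hence $G_x = H$.

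From $G_x = H$ the conclusion is immediate: any $nH \in WH$ fixing $x$ has $n \in N_G H$ with $nx = x$, forcing $n \in G_x = H$, hence $nH$ is the identity of $WH$. Neither step is a substantive obstacle; the only point to be careful about is the implicit choice $H = H_q$ needed to invoke Proposition \ref{P:filtration facts G}\ref{P:filtration facts G:fix H}, and the observation that the statement of \ref{P:JZ and WH:free action of WH on X-X'} is preserved under replacing $H$ by a conjugate, so no generality is lost.
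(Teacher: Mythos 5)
Your proposal is correct and follows essentially the same route as the paper: part (1) is an inspection of the explicit formula \eqref{E:JZ explicit formula} together with $(XZ)^H = X^H Z^H$, and part (2) shows $G_x = H$ for $x \in X^H \setminus X_{q-1}^H$ and then concludes. The only cosmetic difference is that to rule out $H \lneq G_x$ you invoke Proposition \ref{P:filtration facts G}\ref{P:filtration facts G:fix H}, whereas the paper argues directly from the ordering of the enumeration \eqref{E:enumerate subgroups}; these are the same fact, and your reading of the implicit convention $H = H_q$ matches the paper's intent.
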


\begin{proof}
Item \ref{P:JZ and WH:res_G^H and JZ} follows from Proposition \ref{P:filtration facts G}\ref{P:filtration facts G:join-H} and by inspection of \eqref{E:JZ explicit formula}.
For item \ref{P:JZ and WH:free action of WH on X-X'},  suppose that $x \in X^H \setminus X_{q-1}^H$.
Then $H \leq G_x$ and by choice of the enumeration \eqref{E:enumerate subgroups}, $G_x \in (H_i)$ for some $i \leq q$.
Since $x \notin X_{q-1}$ it follows that $i=q$ and therefore $G_x=H$.
In particular, $WH_x$ is trivial.
\end{proof}

\begin{prop}\label{P:isom on fibres in F_q filration}
Let $X$ be a $G$-CW complex.
Set $H=H_q \in \Iso_G(X)$ for some $1 \leq q \leq r$.
Then there is a pullback square, natural in both $X$ and $Y$
\[
\xymatrix{
\map^G(X_q,Y) \ar@{->>}[r]^{i^*} \ar[d]_{\res_G^H} & \map^G(X_{q-1},Y) \ar[d]^{\res_G^H}  \\
\map^{WH}(X^H,Y^H) \ar@{->>}[r]^{i^*} & \map^{WH}(X_{q-1}^H,Y^H)
}
\]
whose rows are fibrations, hence the vertical arrows induce homeomorphisms on all fibres.
\end{prop}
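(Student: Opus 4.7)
The plan is to recognize the top-left corner as a pullback by exhibiting $X_q$ as a pushout of $G$-spaces arising from the cellular decomposition of the pair $(X_q, X_{q-1})$. First, both rows are Serre fibrations: since $(X_q, X_{q-1})$ is a relative $G$-CW complex whose attached equivariant cells all have orbit type $G/H$ (by the choice of enumeration \eqref{E:enumerate subgroups}), the top row is a fibration by the general fact recorded in Section~\ref{Sec:Preliminaries}. Taking $H$-fixed points cell by cell and using $(G/H)^H = WH$, the pair $(X^H, X_{q-1}^H)$ becomes a relative $WH$-CW complex whose attached cells are all free $WH$-cells of the form $WH \times D^n$ (consistent with Proposition~\ref{P:JZ and WH}\ref{P:JZ and WH:free action of WH on X-X'}), so the bottom row is also a fibration.

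The heart of the proof is the claim that $X_q$ is the pushout
\[
\xymatrix{
G \times_{N_G H} X_{q-1}^H \ar[r] \ar[d] & X_{q-1} \ar[d] \\
G \times_{N_G H} X^H \ar[r] & X_q
}
\]
of $G$-spaces, with horizontal maps $[g,x] \mapsto g \cdot x$. Each equivariant cell $G/H \times D^n$ attached in passing from $X_{q-1}$ to $X_q$ is $G$-homeomorphic to $G \times_{N_G H}(WH \times D^n)$, and on $H$-fixed points it contributes exactly the free $WH$-cell $WH \times D^n$ attached to $X_{q-1}^H$. Assembling these cell-by-cell pushouts across all the new cells yields the displayed square as a pushout in the convenient category of $G$-spaces.

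Applying the contravariant functor $\map^G(-,Y)$ converts this pushout into a pullback of spaces. By the standard induction-restriction adjunction, $\map^G(G \times_{N_G H} A, Y) \cong \map^{N_G H}(A,Y)$ for any $N_G H$-space $A$. Since $H$ acts trivially on both $X^H$ and $X_{q-1}^H$, any $N_G H$-equivariant map out of these spaces must land in $Y^H$, and the mapping spaces further simplify to $\map^{WH}(X^H,Y^H)$ and $\map^{WH}(X_{q-1}^H,Y^H)$ respectively, compatibly with the restriction maps. This produces the desired pullback square; naturality in $X$ and $Y$ is automatic. Since the top row is a Serre fibration and the square is a pullback of spaces, the vertical arrows induce homeomorphisms on all fibres by the general property of pullback squares of fibrations.

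The main obstacle I anticipate is verifying the pushout claim at the level of topology rather than merely as a set: one must check that the natural map $G \times_{N_G H} X^H \to X_q$ is injective and a closed embedding on the preimage of $X_q \setminus X_{q-1}$. This rests on Proposition~\ref{P:JZ and WH}\ref{P:JZ and WH:free action of WH on X-X'}, which pins down the isotropy of points in $X_q \setminus X_{q-1}$ to be exactly conjugate to $H$, together with compactness and closed-set arguments appropriate to the convenient category $\cgwh$.
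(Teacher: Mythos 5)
Your proposal is correct and follows essentially the same route as the paper: exhibit $X_q$ as the pushout of $X_{q-1} \leftarrow G \times_{N_GH} X_{q-1}^H \hookrightarrow G \times_{N_GH} X^H$ (using $X_q^H = X^H$ from Proposition~\ref{P:filtration facts G}\ref{P:filtration facts G:fix H}), apply $\map^G(-,Y)$, and invoke the induction-restriction adjunction $\map^G(G \times_{N_GH} A, Y) \cong \map^{N_GH}(A,Y) = \map^{WH}(A,Y^H)$ to identify the bottom row. The paper states the pushout more briskly without the cell-by-cell justification, but the structure of the argument is the same.
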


\begin{proof}
Since $\Iso_G(X_q,X_{q-1})$ is the conjugacy class of $H=H_q$, there is a pushout square 
\begin{equation}\label{E:NGH pushout square} 
\xymatrix{
G \times_{NH} X_{q-1}^H \ar[rr]^(0.55){(g,x) \mapsto gx } \ar@{^(->}[d] & & X_{q-1} \ar@{^(->}[d] 
\\
G \times_{NH} X_{q}^H \ar[rr]^(0.55){(g,x) \mapsto gx } & & X_{q}
}
\end{equation}
in which the vertical arrows are inclusions of $G$-CW complexes, i.e $G$-cofibrations, and which is is natural in $X$.
The pullback square is obtained upon applying the functor $\map^G(-,Y)$ to this pushout square and observing that $X_q^H=X^H$ (Proposition \ref{P:filtration facts G}\ref{P:filtration facts G:fix H}) and that if $A$ is a $WH$-space then there are natural homeomorphisms 
\[
\map^G(G \times_{NH} A,Y) \cong \map^{NH}(A,Y)=\map^{NH}(A,Y^H) = \map^{WH}(A,Y^H).
\]
\end{proof}

%
%
\section{The stabilization lemma}
\label{Sec:stabilization}

The purpose of this section is to prove the following Proposition.

\begin{prop}\label{P:stabilisation main result}
Let $G$ be a finite group.
Let $X,Y,Z$ be finite $G$-CW complexes. Let $k \geq 0$.
Assume that
\begin{enumerate}
\item 
$\Iso_G(X)=\Iso_G(Y)=\Iso_G(XYZ)$,
\label{P:stab hypothesis 1 v2}
\end{enumerate}
and that for any $H \in \Iso_G(X)$
\begin{enumerate}
\setcounter{enumi}{1}
\item
$\dim Y^H >k$,
\label{P:stab hypothesis 2 v2}

\item
$\dim X^H - \dim \left(\bigcup_{K \gneq H} X^K\right) > k$. 
\label{P:stab hypothesis 3 v2}

\item
$\conn (XY)^H \geq \dim X^H+ \dim Y^H$ and $\conn (XYZ)^H \geq \dim X^H+ \dim (YZ)^H$
\label{P:stab hypothesis 5 v2}

\item
$F(Y^H,(XY)^H) \xto{J_{Z^H}} F((YZ)^H, (XYZ)^H)$ is a non-equivariant $(\dim X^H+k+1)$-equivalence (see \eqref{E:def JZ} and Proposition \ref{P:filtration facts G}\ref{P:filtration facts G:join-H}).
\label{P:stab hypothesis 4 v2}
\end{enumerate}
Then the natural map
\[
\map^G(XY,XY) \xto{J_Z} \map^G(XYZ,XYZ)
\]
is a $k$-equivalence.
\end{prop}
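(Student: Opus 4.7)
The overall plan is to reduce the equivariant statement to the non-equivariant input of hypothesis \ref{P:stab hypothesis 4 v2}, using the isotropy-type filtration of Section \ref{S:filtration} to cut the problem into pieces controlled by $WH$-fixed-point data, and the square-diagram/join-homotopy machinery of Sections \ref{Sec:squares} and \ref{Sec:join} to translate the join construction $J_Z$ into a product construction where the non-equivariant $J_{Z^H}$ appears directly.

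\textbf{Filtration and inductive step.} I filter $XY$ and $XYZ$ via subcomplexes built from $X_q \hookrightarrow X$ (so $X_q Y \hookrightarrow XY$ and $X_q YZ \hookrightarrow XYZ$), and induct on $q$ on the comparison
\[
\Phi_q \colon \map^G(X_q Y, XY) \xto{\;J_Z\;} \map^G(X_q YZ, XYZ),
\]
with $\Phi_r$ being the desired conclusion. At each stage, $\Phi_q$ and $\Phi_{q-1}$ are vertical arrows in a commutative ladder of Serre fibrations coming from restriction to the previous stage. By Lemma \ref{L:k equivalences two out of three}\ref{L:k equivalences 2 outof 3:total} combined with the inductive hypothesis on $\Phi_{q-1}$, it suffices to show that $J_Z$ induces a $k$-equivalence on the fibers. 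Proposition \ref{P:isom on fibres in F_q filration} (using $(X_q Y)^H = X^H Y^H$ and $(X_q YZ)^H = X^H Y^H Z^H$ from Proposition \ref{P:filtration facts G}\ref{P:filtration facts G:join-H},\ref{P:filtration facts G:fix H}, for $H = H_q$) identifies these fibers with fibers of the corresponding $WH$-restriction maps between fixed-point mapping spaces, and Proposition \ref{P:JZ and WH}\ref{P:JZ and WH:res_G^H and JZ} ensures that $J_Z$ translates into $J_{Z^H}$. Proposition \ref{P:JZ and WH}\ref{P:JZ and WH:free action of WH on X-X'} then guarantees that the relative $WH$-CW pair $(X^H, X^H_{q-1})$ is built from free $WH$-cells, so the fibers further decompose as non-equivariant mapping spaces out of the attaching data.

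\textbf{Square-diagram reduction to hypothesis \ref{P:stab hypothesis 4 v2}.} To recognise the induced map on fibers as controlled by the non-equivariant $J_{Z^H}$, I set up an object of $\catsq$ (Section \ref{Sec:squares}) whose corners are the four mapping spaces in the $J_Z$-ladder above, and compare it to an auxiliary square whose corners involve product mapping spaces $\map^G(A \times Y, XY)$ and $\map^G(A \times YZ, XYZ)$ linked by the map $\psi$ of Definition \ref{D:def psiAXYZ}. The commutative diagrams \eqref{E:JZ alpha}--\eqref{E:JZ beta1} of Proposition \ref{P:more on alpha beta restricted} give three compatibility relations between $J_Z$, the quotient $\pi^*$, and $\psi$ corresponding to the endpoints of the homotopy between $\alpha$ and $\beta$ of Proposition \ref{P:def alpha and beta via Theta}; Lemma \ref{L:homotopic morphisms in catsq} allows me to glue these relations into a single identification at the level of homotopy limits. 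Since $\psi = \psi_{A, Y, XY, Z}$ is, by construction (Definition \ref{D:def psiAXYZ}), the $F(A, -)$-image of the non-equivariant join map $J_{Z^H}$ of hypothesis \ref{P:stab hypothesis 4 v2}, Lemma \ref{L:k equivalence on fibres} applied with $A$ the free $WH$-CW complex of attaching data converts the $(\dim X^H + k + 1)$-equivalence of hypothesis \ref{P:stab hypothesis 4 v2} into the required $k$-equivalence on fibers, via Lemma \ref{L:elementary facts on Lambda}\ref{L:elementary facts on Lambda 3},\ref{L:elementary facts on Lambda 4}. The base case $q = 0$, with $X_0 = \emptyset$, is the same argument stripped of the relative layer.

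\textbf{The main obstacle} is the tight connectivity accounting: hypothesis \ref{P:stab hypothesis 4 v2} provides exactly a $(\dim X^H + k + 1)$-equivalence, while the relative $(\dim X^H)$-dimensional CW structure on $(X^H, X^H_{q-1})$, nonempty by hypothesis \ref{P:stab hypothesis 3 v2}, consumes $\dim X^H$ degrees through Lemma \ref{L:k equivalence on fibres}, and one further degree is absorbed in passing from the $k$-equivalence on homotopy limits of Lemma \ref{L:elementary facts on Lambda} back to the fiber $k$-equivalence required in the inductive step. The connectivity conditions \ref{P:stab hypothesis 2 v2} and \ref{P:stab hypothesis 5 v2} on $Y^H$ and on the joins $(XY)^H$, $(XYZ)^H$ are needed precisely to meet the hypotheses of Lemma \ref{L:k equivalences two out of three}\ref{L:k equivalences 2 outof 3:fibres} and Corollary \ref{C:maps with highly connected target Y} at the intermediate vertices of the square argument, so that the entire reduction pipeline delivers exactly $k$ at the end.
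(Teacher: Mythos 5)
Your filtration is not the one the argument needs, and this creates a genuine gap.  You filter $XY$ and $XYZ$ by the subcomplexes $X_qY$ and $X_qYZ$ obtained from the isotropy filtration of $X$ alone.  The paper instead filters $XY$ and $XYZ$ by \emph{their own} isotropy filtrations $(XY)_q$ and $(XYZ)_q$ from \eqref{E:def filtration X_q}.  These are genuinely different subcomplexes: Proposition \ref{P:filtration facts G}\ref{P:filtration facts G:join-q} only gives $(XY)_q \subseteq X_qY$, and in general the inclusion is proper.  The distinction matters because the step you rely on --- Proposition \ref{P:isom on fibres in F_q filration}, which replaces fibers of a $G$-equivariant restriction fibration by fibers of a $WH$-equivariant one --- is proved using the pushout square \eqref{E:NGH pushout square}, and that pushout exists only when the relative isotropy set $\Iso_G(\,\cdot\,,\,\cdot\,)$ is the single conjugacy class $(H_q)$.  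That holds for $((XY)_q,(XY)_{q-1})$ but fails for $(X_qY,X_{q-1}Y)$: a join point $[tx,(1-t)y]$ with $0<t<1$, $G_x\in(H_q)$, has isotropy $G_x\cap G_y$, which ranges over many conjugacy classes as $y$ varies.  So Proposition \ref{P:isom on fibres in F_q filration} cannot be invoked in the way your inductive step requires, and the claimed identification of fibers with $WH$-mapping spaces does not follow.

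Relatedly, you skip the step that reconciles the two filtrations, which is where the connectivity hypotheses \ref{P:stab hypothesis 2 v2}, \ref{P:stab hypothesis 3 v2}, \ref{P:stab hypothesis 5 v2} actually get used in the paper.  After reducing the fibers of the $((XY)_q,(XY)_{q-1})$ step to $WH$-equivariant mapping spaces out of $(XY)^H$ and $(XY)_{q-1}^H$, the paper interposes the cleaner subcomplex $(X_{q-1}Y)^H$ via the restriction maps $\ell^*$ in diagram \eqref{E:stabilisation 3}, and it is precisely these $\ell^*$ that are shown to be $k$-equivalences by Corollary \ref{C:maps with highly connected target Y}, using the dimension gap of hypothesis \ref{P:stab hypothesis 3 v2} together with \ref{P:stab hypothesis 2 v2} and \ref{P:stab hypothesis 5 v2}.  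Only after this passage does one land in the setting of the auxiliary Proposition \ref{P:stabilisation key to WH reduction}, where the pair $X_{q-1}^H\subseteq X^H$ has free $WH$-action away from the small part (Proposition \ref{P:JZ and WH}\ref{P:JZ and WH:free action of WH on X-X'}) and the $\catsq$-machinery with $\psi$, $\alpha$, $\beta$, and Lemma \ref{L:homotopic morphisms in catsq} can run.  Your description of that $\catsq$ reduction is in the right spirit, but without the correct filtration and the $\ell^*$ comparison step feeding into it, the argument as written does not close.
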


\begin{proof}
We will use the filtration \eqref{E:def filtration X_q} and show that the composition
\begin{equation}\label{E:stabilisation induction 1}
\map^G((XY)_q,XY) \xto{\ J_Z \ } \map^G((XY)_qZ,XYZ) \xto{j^*} \map^G((XYZ)_q,XYZ)
\end{equation}
is a $k$-equivalence for any $0 \leq q \leq r$, where $j$ denotes the inclusion $(XYZ)_q \subseteq (XY)_qZ$, see Proposition \ref{P:filtration facts G}\ref{P:filtration facts G:join-q}.
The claim of the proposition follows for $q=r$.

The proof is by induction on $0 \leq q \leq r$.
The base of induction $q=0$ is a triviality since $(XY)_q=(XYZ)_q=\emptyset$.
We therefore assume that \eqref{E:stabilisation induction 1} is a $k$-equivalence for $q-1$ and we prove it for $q \leq r$.
Set 
\[
H=H_q.
\]
Since $X \subseteq XY \subseteq XYZ$, it follows from hypothesis \ref{P:stab hypothesis 1 v2} that $\Iso_G(X)=\Iso_G(XY)=\Iso_G(XYZ)$.
By the definition of the filtration \eqref{E:def filtration X_q}, if $H \notin \Iso_G(X)$ then $(XY)q=(XY)_{q-1}$ and $(XYZ)_q=(XYZ)_{q-1}$, in which case the induction step follows trivially from its hypothesis.
We therefore assume that
\[
H \in \Iso_G(X).
\]
Naturality of $J_Z$ gives the following commutative diagram whose rows are fibrations and in which $i$ denotes the inclusions $(XY)_{q-1} \subseteq (XY)_q$ and $(XYZ)_{q-1} \subseteq (XYZ)_q$, and $j$ denotes the inclusions $(XYZ)_q \subseteq (XY)_qZ$ and $(XYZ)_{q-1} \subseteq (XY)_{q-1}Z$. 
\begin{equation}\label{E:stabilisation 2}
\vcenter{
\xymatrix{
\map^G((XY)_q,XY) \ar@{->>}[r]^{i^*} \ar[d]_{j^* \circ J_Z} &
\map^G((XY)_{q-1},XY) \ar[d]^{j^* \circ J_Z} 
\\
\map^G((XYZ)_q,XYZ) \ar@{->>}[r]^{i^*} &
\map^G((XYZ)_{q-1},XYZ).
}
}
\end{equation}
The vertical arrow on the right is a $k$-equivalence by the induction hypothesis, so by Lemma \ref{L:k equivalences two out of three}\ref{L:k equivalences 2 outof 3:total} it remains to show that the fibres of $i^*$ are $k$-equivalent.

Proposition \ref{P:filtration facts G}\ref{P:filtration facts G:join-H} and the formula \eqref{E:JZ explicit formula} for $J_Z(f)$ easily imply the commutativity of the following diagram, where $i$ and $\ell$ denote the inclusions of the $H$-fixed points of $(XY)_{q-1} \subseteq (X_{q-1}Y)_q \subseteq (XY)_q$ and $(XYZ)_{q-1} \subseteq (X_{q-1}YZ)_q \subseteq (XYZ)_q$, and $j$ denoted the inclusion of the $H$-fixed points of $(XYZ)_{q-1} \subseteq (XY)_{q-1}Z$.
\begin{equation}\label{E:stabilisation 3}
\vcenter{
\xymatrix{
\map^{WH}((XY)^H,(XY)^H) \ar@{->>}[r]^{i^*} \ar@{=}[d] \ar@/_1pc/@<-14ex>[ddd]_{J_{Z^H}} &
\map^{WH}((X_{q-1}Y)^H,(XY)^H) \ar@{->>}[d]^{\ell^*} \ar@/^1pc/@<15ex>[ddd]^{J_{Z^H}}
\\
\map^{WH}((XY)^H,(XY)^H) \ar@{->>}[r]^{i^*} \ar[d]_{J_{Z^H}} &
\map^{WH}((XY)_{q-1}^H,(XY)^H) \ar[d]_{j^* \circ J_{Z^H}} 
\\
\map^{WH}((XYZ)^H,(XYZ)^H) \ar@{->>}[r]^{i^*} \ar@{=}[d] 
&
\map^{WH}((XYZ)_{q-1}^H,(XYZ)^H) 
\\
\map^{WH}((XYZ)^H,(XYZ)^H) \ar@{->>}[r]^{i^*} 
&
\map^{WH}((X_{q-1}YZ)^H,(XYZ)^H)  \ar@{->>}[u]_{\ell^*}
}
}
\end{equation}
By Proposition \ref{P:JZ and WH}\ref{P:JZ and WH:res_G^H and JZ}, the maps $\res_G^H$ in \eqref{E:def res_G^H} give rise to a natural transformation between the commutative square \eqref{E:stabilisation 2} to the square in the middle of \eqref{E:stabilisation 3}.
By Proposition \ref{P:isom on fibres in F_q filration} the fibres of the rows of \eqref{E:stabilisation 2} over any $f \in \map^G((XY)_{q-1},XY)$ are homeomorphic to the fibres  over $\res_G^H(f)$ of the rows of the middle square of \eqref{E:stabilisation 3}.

Therefore, it suffices to prove that the fibres of the rows of the 2nd square in \eqref{E:stabilisation 3} are $k$-equivalent for any choice of basepoint in $\map^{WH}((XY)_{q-1}^H,(XY)^H)$.
If $Z^H=\emptyset$ then this is a triviality since $J_{Z^H}$ and $j^*$ are the identity maps.
So for the remainder of the proof we assume that
\[
Z^H \neq \emptyset.
\]
It follows from hypothesis \ref{P:stab hypothesis 3 v2} and from Proposition \ref{P:filtration facts G}\ref{P:filtration facts G:fix H} 
that $\dim X^H-\dim X_{q-1}^H \geq k+1$.
Together with hypotheses \ref{P:stab hypothesis 5 v2} and \ref{P:stab hypothesis 2 v2} we get
\[
\conn (XY)^H - \dim (X_{q-1}Y)^H \geq \dim X^H+ \dim Y^H - (\dim X_{q-1}^H+\dim Y^H+1) \geq k.
\]
Similarly, 
\[\conn (XYZ)^H \geq \dim (X_{q-1}YZ)^H+k.
\]
Since $H \in \Iso_G(X)$ Proposition \ref{P:JZ and WH}\ref{P:JZ and WH:free action of WH on X-X'}  implies that $\Iso_{WH}((X_{q-1}Y)^H,(XY)_{q-1}^H) =\{e\}$.
Corollary \ref{C:maps with highly connected target Y} applies with $(XY)_{q-1}^H \subseteq (X_{q-1}Y)^H$ and with $(XYZ)_{q-1}^H \subseteq (X_{q-1}YZ)^H$ to show that both maps $\ell^*$ in \eqref{E:stabilisation 3} are $k$-equivalences.
It follows from Lemma \ref{L:k equivalences two out of three}\ref{L:k equivalences 2 outof 3:fibres} that the fibres of $i^*$ at the top and bottom squares of \eqref{E:stabilisation 3} are $k$-equivalent.
Since $\ell^*$ are bijective on components, it suffices to show that the fibres of $i^*$ at the top and bottom of \eqref{E:stabilisation 3} are $k$-equivalent via the curved arrows.

This is indeed the case by applying Proposition \ref{P:stabilisation key to WH reduction} below with $X_{q-1}^H \subseteq X^H$ and $Y^H$ and $Z^H$ and $G=WH$.
To see this, notice first that $X^H$ and $Y^H$ are not empty since $H \in \Iso_G(X)=\Iso_G(Y)$.
Also $Z^H \neq \emptyset$ by assumption.
Hypothesis \ref{P:stabilisation key to WH reduction hyp 0} of Proposition \ref{P:stabilisation key to WH reduction} follows from Proposition \ref{P:JZ and WH}\ref{P:JZ and WH:free action of WH on X-X'}.
Hypothesis \ref{P:stabilisation key to WH reduction hyp 2} of Proposition \ref{P:stabilisation key to WH reduction} is hypothesis \ref{P:stab hypothesis 4 v2} of this proposition.
Hypothesis \ref{P:stabilisation key to WH reduction hyp 4} of Proposition \ref{P:stabilisation key to WH reduction} follows from hypotheses \ref{P:stab hypothesis 5 v2} and \ref{P:stab hypothesis 2 v2} of this proposition.
\end{proof}

%
\begin{prop}\label{P:stabilisation key to WH reduction}
Let $G$ be a finite group and $X,Y,Z$ be finite non-empty $G$-CW complexes and $X' \subseteq X$ a $G$-subcomplex.
Let $k \geq 0$.
Suppose that 
\begin{enumerate}
\item
\label{P:stabilisation key to WH reduction hyp 0}
$\Iso_G(X,X') = \{e\}$.


\item
\label{P:stabilisation key to WH reduction hyp 2}
$F(Y,XY) \xto{J_Z} F(YZ,XYZ)$ 
is a {\em non-equivariant} $(\dim X+k+1)$-equivalence. 


\item
\label{P:stabilisation key to WH reduction hyp 4}
$\conn(XY) \geq \dim X +k +1$.    

\end{enumerate}
Then the maps induced on fibres of the horizontal arrows in the following diagram
\[
\xymatrix{
\map^G(XY,XY) \ar@{->>}[r]^{i^*} \ar[d]_{J_Z} & \map^G(X'Y,XY) \ar[d]^{J_Z} 
\\
\map^G(XYZ,XYZ) \ar@{->>}[r]^{i^*}  & \map^G(X'YZ,XYZ)  
}
\]
are $k$-equivalences for any choice of basepoint in the space at the top right corner.
\end{prop}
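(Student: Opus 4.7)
My plan is to use the free relative $G$-CW structure on $(X,X')$ provided by hypothesis \ref{P:stabilisation key to WH reduction hyp 0}, combined with the join-based identifications of Section \ref{Sec:join}, in order to reduce the statement to hypothesis \ref{P:stabilisation key to WH reduction hyp 2}. Since $\Iso_G(X,X')=\{e\}$, all relative cells of $(X,X')$ have the form $G\times D^n$ with $n\le\dim X$. Because $-*Y$ preserves pushouts in its first variable, attaching such a cell along $G\times S^{n-1}$ exhibits the corresponding stage of $(XY,X'Y)$ as a pushout, and applying $\map^G(-,T)$ produces a pullback square whose horizontal maps are Serre fibrations. Iterated use of Lemmas \ref{L:k equivalences two out of three} and \ref{L:k equivalence transitive fibres} then reduces the claim to showing, for each $0\le n\le\dim X$, that $J_Z$ induces a $k$-equivalence between the fibres of the single-cell restriction
\[
\map^G((G\times D^n)Y,XY) \xto{i^*} \map^G((G\times S^{n-1})Y,XY)
\]
and the fibres of its $Z$-joined analogue.

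Next I would identify these single-cell mapping spaces explicitly. For $K$ a space with trivial $G$-action, a $G$-map $(G\times K)Y\to T$ is, by equivariance and the description of the join, the same datum as a map $\tilde f\colon K\times Y\times I\to T$ such that $\tilde f(k,y,0)=\psi(y)$ for some $\psi\in\map^G(Y,T)$ independent of $k$, and $\tilde f(k,y,1)=\phi(k)$ for some $\phi\colon K\to T$ independent of $y$. Under exponential adjunction this exhibits $\map^G((G\times K)Y,T)$ as fibering over $\map^G(Y,T)$ with fibre over $\psi$ equal to $F(K,\Path_{\psi,T}F(Y,T))$, where $\Path_{\psi,T}F(Y,T)$ denotes the space of paths in $F(Y,T)$ starting at $\psi$ and ending in the subspace $T\subseteq F(Y,T)$ of constant maps. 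The restriction $i^*$ acts only on the $F(K,-)$ factor and its fibre is a relative mapping space $F((D^n,S^{n-1}),\Path_{\psi,T}F(Y,T))$ based at a fixed $h_0$.

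With this identification the $J_Z$-induced map on fibres becomes accessible via the compatibility diagrams \eqref{E:JZ alpha}--\eqref{E:JZ beta1} of Proposition \ref{P:more on alpha beta restricted}. By Definition \ref{D:def psiAXYZ} the map $\psi_{A\times I,Y,XY,Z}$ is precisely $\map^G(A\times I,J_Z)$ for the map $J_Z\colon F(Y,XY)\to F(YZ,XYZ)$ of hypothesis \ref{P:stabilisation key to WH reduction hyp 2}. Using the homotopy between $\alpha$ and $\beta$ furnished by Proposition \ref{P:def alpha and beta via Theta} together with Lemma \ref{L:homotopic morphisms in catsq} I can replace the awkward ``$J_Z$ on joined mapping spaces'' by the more tractable ``$\psi$ on product mapping spaces'', which is just $\map^G(K\times I,J_Z)$ applied to hypothesis \ref{P:stabilisation key to WH reduction hyp 2}. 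Lemma \ref{L:k equivalence on fibres} applied to the free $G$-CW pair $S^{n-1}\subseteq D^n$ and to the $(\dim X+k+1)$-equivalence $J_Z\colon F(Y,XY)\to F(YZ,XYZ)$ then yields a $k$-equivalence on fibres, since $n\le\dim X$.

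The main obstacle I expect is the third step: the constant-map embedding $T\hookrightarrow F(Y,T)$ is \emph{not} preserved by $J_Z$, because a constant map at $t\in XY$ is sent to the non-constant join $[sy,(1-s)z]\mapsto[st,(1-s)z]$ in $F(YZ,XYZ)$. Consequently the path spaces $\Path_{\psi,T}F(Y,T)$ are not naively functorial along $J_Z$, and a direct comparison of fibres breaks down. The $\alpha$--$\beta$ homotopy and the $\catsq$/$\Lambda$ formalism of Section \ref{Sec:squares} are designed precisely to bridge this mismatch, while hypothesis \ref{P:stabilisation key to WH reduction hyp 4} controls the path-connectivity of $F(Y,XY)$ and its variants (so that basepoint choices do not obstruct the comparison). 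Carefully tracking basepoints through the two associativity identifications $(AY)Z\cong AYZ\cong A(YZ)$ of \eqref{E:associativity isom XYZ} is where the bulk of the bookkeeping lies.
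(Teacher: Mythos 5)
Your outline correctly locates both the key obstacle (that $J_Z$ fails to carry the constant-map subspace $XY\subseteq F(Y,XY)$ into the constant-map subspace $XYZ\subseteq F(YZ,XYZ)$) and the key tools the paper has prepared (the $\alpha$--$\beta$ homotopy of Section~\ref{Sec:join} and the $\catsq$/$\Lambda$ formalism of Section~\ref{Sec:squares}). However, the proposal has a genuine gap: the step you yourself flag as ``the main obstacle'' is exactly the content of the paper's proof, and you do not carry it out. Saying the machinery ``is designed precisely to bridge this mismatch'' is not a bridge. Concretely, the paper does this via four objects of $\catsq$: $\A$ built from $\map^G(XY,XY)$, $\map^G(X'Y,XY)$, and the product $\map^G(Y,XY)\times\map^G(X,XY)$; $\B$ its $Z$-joined analogue; $\C$ and $\D$ the ``linearized'' versions with $X\times Y\times I$ and $X\times YZ\times I$ in place of $XY$ and $XYZ$. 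It introduces morphisms $\Phi=J_Z\colon\A\to\B$, $\Pi\colon\A\to\C$ and $\Pi\colon\B\to\D$ (pullbacks along the join quotient, which are $\Lambda$-equivalences by Lemma~\ref{L:elementary facts on Lambda}\ref{L:elementary facts on Lambda 1}), and $\Psi\colon\C\to\D$ (the $\psi$-maps of Definition~\ref{D:def psiAXYZ}), and proves the $\catsq$-homotopy $\Pi\circ\Phi\simeq\Psi\circ\Pi$ using Propositions~\ref{P:def alpha and beta via Theta} and \ref{P:more on alpha beta restricted}. It then shows $\Lambda(\Psi)$ is a $k$-equivalence via Lemma~\ref{L:k equivalence on fibres} applied to $X'\subseteq X$ and hypothesis~\ref{P:stabilisation key to WH reduction hyp 2}, deduces that $\Lambda(\Phi)$ is a $k$-equivalence by Lemma~\ref{L:homotopic morphisms in catsq}, and finally invokes Lemma~\ref{L:elementary facts on Lambda}\ref{L:elementary facts on Lambda 4} together with hypothesis~\ref{P:stabilisation key to WH reduction hyp 4} (which makes the $a_{20}$ and $b_{20}$ legs $(k+1)$-equivalences via Corollary~\ref{C:maps with highly connected target Y}) to extract the $k$-equivalence on fibres of $i^*$. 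None of this bookkeeping is present in your sketch, and it cannot be waved away as routine: getting the five compatibilities (Claims 1--5) right is what the section is for.

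Two secondary points. First, your initial cell-by-cell filtration of $(X,X')$ is not how the paper proceeds and adds no leverage: the single-cell case still exhibits exactly the same mismatch, so you would have to carry out the four-square comparison for each cell, whereas the paper does it once for the whole pair and lets Lemma~\ref{L:k equivalence on fibres} (already cell-by-cell internally) absorb the relative CW structure inside Claim~4. Second, your gloss on hypothesis~\ref{P:stabilisation key to WH reduction hyp 4} as ``controlling path-connectivity so basepoints don't obstruct'' is not quite its role: it is used at the very end, via Corollary~\ref{C:maps with highly connected target Y}, to make the maps $\map^G(X,XY)\to\map^G(X',XY)$ and $\map^G(X,XYZ)\to\map^G(X',XYZ)$ into $(k+1)$-equivalences, which is precisely the input Lemma~\ref{L:elementary facts on Lambda}\ref{L:elementary facts on Lambda 4} requires to pass from $k$-equivalence of the $\Lambda$'s to $k$-equivalence of the fibres of $i^*$.
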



\begin{proof}
Let $i \colon X' \to X$ be the inclusion.
Define the following objects in $\catsq$, see Definition \ref{D:cat sq}.
\[
\A =
\vcenter{
\xymatrix{
\map^G(XY,XY) \ar@{->>}[r]^(0.35){(i_Y^*,i_X^*)} \ar@{->>}[d]_{i^*} &
\map^{G}(Y,XY) \times \map^G(X,XY) \ar@{->>}[d]^{\id \times i^*} 
\\
\map^{G}(X'Y,XY) \ar@{->>}[r]^(0.37){(i_Y^*,i_{X'}^*)}  &
\map^{G}(Y,XY) \times \map^{G}(X',XY).
}
}
\]
\[
\B =
\vcenter{
\xymatrix{
\map^{G}(XYZ,XYZ) \ar@{->>}[r]^(0.4){(i_{YZ}^*,i_X^*)} \ar@{->>}[d]_{i^*} &
\map^{G}(YZ,XYZ) \times \map^{G}(X,XYZ) \ar@{->>}[d]^{\id \times i^*} 
\\
\map^{G}(X'YZ,XYZ) \ar@{->>}[r]^(0.4){(i_{YZ}^*,i_{X'}^*)}  &
\map^{G}(YZ,XYZ) \times \map^{G}(X',XYZ).
}
}
\]
\[
\C = 
\vcenter{
\xymatrix{
\map^{G}(X \times Y \times I,XY) \ar@{->>}[r]^{(\ev_0,\ev_1)} \ar@{->>}[d]_{i^*} &
\map^{G}(X \times Y,XY)^2 \ar@{->>}[d]^{i^* \times i^*} 
\\
\map^{G}(X' \times Y \times I,XY) \ar@{->>}[r]^{(\ev_0,\ev_1)} &
\map^{G}(X' \times Y,XY)^2.
}
}
\]
\[
\D =
\vcenter{
\vbox{
\xymatrix{
\map^G\left(X \times YZ \times I,XYZ\right) \ar@{->>}[r]^{(\ev_0,\ev_1)} \ar@{->>}[d]_{i^*} &
\map^{G}(X \times YZ,XYZ)^2 \ar@{->>}[d]^{i^*\times i^*}
\\
\map^{G}\left(X' \times YZ \times I,XYZ\right) \ar@{->>}[r]^(0.53){(\ev_0,\ev_1)} &
\map^{G}(X' \times YZ,XYZ)^2
}
}
}
\]
The commutativity of these squares is a direct consequence of the naturality of $i \mapsto i^*$.
The plan of the proof is as follows.
\begin{itemize}
\item[(a)]
Define morphisms $\A \xto{\Phi} \B \xto{\Pi} \D$ and $\A \xto{\Pi} \C \xto{\Psi} \D$ in $\catsq$.
{\em Note:} We used $\Pi$ to denote two different morphisms; This will create no source of confusion and the reason for the choice will become apparent in \eqref{E:def Pi B to D} and \eqref{E:def Pi A to C} where they are defined.
\item[(b)]
Show that both $\Lambda(\B,\ul{y}) \xto{\Lambda(\Pi)} \Lambda(\D,\Pi(\ul{y}))$ and $\Lambda(\A,\ul{x}) \xto{\Lambda(\Pi)} \Lambda(\C,\Pi(\ul{x}))$ are weak homotopy equivalences for any choice of basepoints $\ul{x}$ for $\A$ and $\ul{y}$ for $\B$, see Definition \ref{D:cat sq} and equation \eqref{E:def Lambda}.
\item[(c)]
Show that $\Lambda(\C,\ul{y}) \xto{\Lambda(\Psi)} \Lambda(\D,\Psi(\ul{y}))$ is a $k$-equivalence for any choice of basepoint $\ul{y}$ in $\C$.
\item[(d)]
Show that $\A \xto{\Pi \circ \Phi} \D$ is homotopic to $\A \xto{\Psi \circ \Pi} \D$ (Definition \ref{D:homotopy in catsq}).
\item[(e)] Deduce that $\Lambda(\A,\ul{x}) \xto{\Lambda(\Phi)} \Lambda(\B,\Phi(\ul{x}))$ is a $k$-equivalence for any choice of base point $\ul{x}$ for $\A$.
Use this and Lemma \ref{L:elementary facts on Lambda}\ref{L:elementary facts on Lambda 4}  to complete the proof.
\end{itemize}
With the indexing in \eqref{E:object of catsq}, we will now describe maps $\Phi_i \colon \A_i \to \B_i$ and $\Pi_i \colon \B_i \to \D_i$ and $\Pi_i \colon \A_i \to \C_i$ and $\Psi_i \colon \C_i \to \D_i$, where $i=0,\dots,3$.
We will then show that these are the components of natural transformations $\A \xto{\Phi} \B$ and $\B \xto{\Pi} \D$ and $\A \xto{\Pi} \C$ and $\C \xto{\Psi} \D$.

\noindent
{\em Notation:}
In what follows we will use the letter $A$ to represents either $X$ or $X'$,
\[
A=X' \text{  or  } A=X.
\]
Define $\Phi_i \colon \A_i \to \B_i$  using the maps $J_Z$ \eqref{E:def JZ} and the inclusion $\incl \colon XY \to XYZ$ as follows, 
\begin{eqnarray}
\label{E:def Phi}
&& \Phi_1,\Phi_3 \colon \map^{G}(AY,XY) \xto{J_Z} \map^{G}(AYZ,XYZ) \\
\nonumber
&& \Phi_0, \Phi_2  \colon \map^{G}(Y,XY) \times \map^{G}(A,XY) \xto{J_Z \times \incl_*} \map^{G}(YZ,XYZ) \times \map^{G}(A,XYZ)
\end{eqnarray}
Let $\pi \colon A \times YZ  \times I \to AYZ$ be the map in \eqref{E:join pushout}. 
Let $\pi_A \colon A \times YZ \to A$ and $\pi_{YZ} \colon A \times YZ \to YZ$ be the projections.
Define $\Pi_i \colon \B_i \to \D_i$ as follows,
\begin{eqnarray}\label{E:def Pi B to D}
&& \Pi_1, \Pi_3 \colon \map^{G}(AYZ,XYZ) \xto{\ \pi^*\ } \map^{G}(A \times YZ \times I,XYZ) \\
\nonumber 
&& \Pi_0, \Pi_2 \colon \map^{G}(YZ,XYZ) \times \map^{G}(A,XYZ) \xto{\pi_{YZ}^* \times \pi_A^*} \map^{G}(A \times YZ,XYZ)^2 
\end{eqnarray}
Let $\pi \colon A \times Y \times I \to AY$ be as in \eqref{E:join pushout} and $\pi_A \colon A \times Y \to A$ and $\pi_Y \colon A \times Y \to Y$ be the projections.
Set  $\Pi_i \colon \A_i \to C_i$ as follows
\begin{eqnarray}\label{E:def Pi A to C}
&& \Pi_3, \Pi_1 \colon \map^{G}(AY,XY) \xto{\pi^*} \map^{G}(A \times Y \times I, XY) \\
\nonumber
&& \Pi_2, \Pi_0 \colon \map^{G}(Y,XY) \times \map^{G}(A,XY) \xto{\pi_Y^* \times \pi_A^*} \map^{G}(A \times Y, XY)^2 
\end{eqnarray}
Use the maps $\psi_{A\times I,Y,XY,Z}$ and $\psi_{A,Y,XY,Z}$ in Definition \ref{D:def psiAXYZ} to define $\Psi_i \colon \C_i \to \D_i$
\begin{eqnarray}\label{E:def Psi}
&& \Psi_3,\Psi_1 \colon \map^{G}(A \times Y \times I,XY) \xto{\psi} \map^{G}(A \times YZ \times I,XYZ) \\
\nonumber
&& \Psi_2,\Psi_0 \colon \map^{G}(A \times Y,XY)^2 \xto{\psi \times \psi} \map^{G}(A \times YZ,XYZ)^2 
\end{eqnarray}

\noindent
{\em Claim 1:} The maps $\Phi_0,\dots,\Phi_3$ in \eqref{E:def Phi} define a natural transformation $\Phi \colon \A \to \B$.

\noindent
Proof: With the indexing of Definition \ref{D:cat sq},  the naturality of $J_Z$ and the equality $\incl_* \circ i^* = i^* \circ \incl_*$ imply that  $\Phi_1 \circ a_{31}=b_{31} \circ \Phi_3$  and $\Phi_0 \circ a_{20}=b_{20} \circ \Phi_2$.
The commutative square in \eqref{E:JZ incl} implies the commutativity of the following diagram
\[
\xymatrix{
\map^G(AY,XY) \ar[rr]^(0.4){((i_Y^{AY})^*,(i_A^{AY})^*)} \ar[d]_{J_Z} &  &
\map^G(Y,XY) \times \map^G(A,XY) \ar[d]^{J_Z \times J_Z} 
\\
\map^G(AYZ,XYZ) \ar[rr]^(0.4){((i_{YZ}^{AYZ})^*,(i_{AZ}^{AYZ})^*)} & &
\map^G(YZ,XYZ) \times \map^G(AZ,XYZ)
}
\]
Composing the 2nd factor of the 2nd column with $\map^G(AZ,XYZ) \xto{i_A^*} \map^G(A,XYZ)$ and using the commutative triangle in \eqref{E:JZ incl}, it follows that $\Phi_2 \circ a_{32}=b_{32} \circ \Phi_3$ and $\Phi_0 \circ a_{10}=b_{10} \circ \Phi_1$.
Hence, $\Phi \colon \A \to \B$ is a morphism in $\catsq$.
\hfill QED

\noindent
{\em Claim 2:} The maps $\Psi_0,\dots,\Psi_3$ in \eqref{E:def Psi} define a morphism $\Psi \colon \C \to \D$.

\noindent
{\em Proof:} This is immediate from the naturality of $\psi$ with respect to the inclusions $X' \subseteq X$ and $X' \times I \subseteq X \times I$ and the inclusions $A \coprod A \subseteq A \times I$.
%
\hfill QED.

\noindent
{\em Claim 3:} The maps $\Pi_0,\dots,\Pi_3$ in \eqref{E:def Pi A to C} define a morphism $\Pi \colon \A \to \C$ in $\catsq$.
Moreover $\Lambda(\A,\ul{x}) \xto{\Lambda(\Pi)} \Lambda(\C,\Pi(\ul{x}))$ is a weak homotopy equivalence for any basepoint $\ul{x}$ for $\A$.
Similarly, the maps $\Pi_0,\dots,\Pi_3$ in \eqref{E:def Pi B to D} define a morphism $\Pi \colon \B \to \D$ in $\catsq$ and $\Lambda(\B,\ul{y}) \xto{\Lambda(\Pi)} \Lambda(\D,\Pi(\ul{y}))$ is a weak homotopy equivalence for any basepoint $\ul{y}$ in $\B$.

\noindent
{\em Proof:} 
We will prove the statements about the maps in \eqref{E:def Pi A to C} and $\Pi \colon \A \to \C$. 
The proof for the maps in \eqref{E:def Pi B to D} and  $\Pi \colon \B \to \D$ is obtained by replacing $Y$ with $YZ$ everywhere and $\A$ with $\B$ and $\C$ with $\D$.

By applying $\map^G(-,XY)$ to the commutative squares
\[
\xymatrix{
X' \times Y \times I \ar[r]^(0.6){\pi} \ar[d]_i &
X'Y \ar[d]^{i}
\\
X \times Y \times I \ar[r]^(0.6){\pi} & XY
}
\quad
\xymatrix{
(X' \times Y) \coprod (X' \times Y) \ar[rr]^(0.6){\pi_{Y} \coprod \pi_{X'}} \ar@{^(->}[d] & &
Y \coprod X'  \ar@{^(->}[d]
\\
(X \times Y) \coprod (X \times Y) \ar[rr]^(0.6){\pi_{Y} \coprod \pi_{X}} & &
Y \coprod X.
}
\]
it follows that $c_{31} \circ \Pi_3 = \Pi_1 \circ a_{31}$ and $c_{20} \circ \Pi_2 = \Pi_0 \circ a_{20}$.
By applying $\map^G(-,XY)$ to the pushout square \eqref{E:join pushout} we obtain pullback squares
\[
\xymatrix{
\A_3 \ar@{->>}[r]^{a_{32}} \ar[d]_{\Pi_3} & \A_2 \ar[d]^{\Pi_2} \\
\C_3 \ar@{->>}[r]^{c_{32}} & \C_2 
}
\qquad
\xymatrix{
\A_1 \ar@{->>}[r]^{a_{10}} \ar[d]_{\Pi_1} & \A_0 \ar[d]^{\Pi_0} \\
\C_1 \ar@{->>}[r]^{c_{10}} & \C_0
}
\]
which are in particular commutative.
Thus, $\Pi_0,\dots,\Pi_3$ define a natural transformation $\Pi \colon \A \to \C$.
Since the horizontal arrows $a_{32}, a_{10}$ and $c_{32}$ and $c_{10}$ are fibrations, the two squares above are homotopy pullback squares and Lemma \ref{L:elementary facts on Lambda}\ref{L:elementary facts on Lambda 1} shows that $\Lambda(\Pi,\ul{x})$ are weak homotopy equivalences for all basepoints $\ul{x}$ of $\A$.
\hfill QED.

Claims 1--3 complete steps (a) and (b) in our plan of the proof.

\noindent
{\em Claim 4:} $\Lambda(\C,\ul{y}) \xto{\Lambda(\Psi)} \Lambda(\D,\Psi(\ul{y}))$ is a $k$-equivalence for any choice of basepoint $\ul{y}$ in $\C$.

\noindent
{\em Proof:}
Hypotheses \ref{P:stabilisation key to WH reduction hyp 0} and \ref{P:stabilisation key to WH reduction hyp 2} 
allow us to apply Lemma \ref{L:k equivalence on fibres} to the following commutative squares
\[
\xymatrix{
\map^{G}(X \times I,F(Y,XY)) \ar@{->>}[r]^{i^*} \ar[d]_{J_Z^*} &
\map^{G}(X' \times I, F(Y,XY)) \ar[d]^{J_Z^*} \\
\map^{G}(X \times I, F(YZ,XYZ)) \ar@{->>}[r]^{i^*} &
\map^{G}(X' \times I,F(YZ,XYZ))
}
\]
\[
\xymatrix{
\map^{G}(X, F(Y,XY))^2 \ar@{->>}[r]^{i^* \times i^*} \ar[d]_{J_Z^* \times J_Z^*} &
\map^{G}(X',F(Y,XY))^2 \ar[d]^{J_Z^* \times J_Z^*}
\\
\map^{G}(X, F(YZ,XYZ))^2 \ar@{->>}[r]^{i^* \times i^*} &
\map^{G}(X',F(YZ,XYZ))^2 
}
\]
It follows that the fibres of the rows in the $1$st square are $k$-equivalent and those in the $2$nd square are $(k+1)$-equivalent.
By construction of the maps $\Psi_0,\dots,\Psi_3$ in \eqref{E:def Psi} and Definition \ref{D:def psiAXYZ} of the maps $\psi$,  it follows that the maps induced on the homotopy fibres in the following diagrams
\[
\xymatrix{
\hofib(c_{31},y_1) \ar[r] \ar[d] &
\C_3 \ar[d]_{\Psi_3} \ar@{->>}[r]^{c_{31}} &
\C_1 \ar[d]^{\Psi_1} \\
\hofib(d_{31},\Psi_1(y_1)) \ar[r]  &
\D_3  \ar@{->>}[r]^{d_{31}} &
\D_1
}
\qquad
\xymatrix{
\hofib(c_{20},y_0) \ar[r] \ar[d] &
\C_2 \ar[d]_{\Psi_2} \ar@{->>}[r]^{c_{20}} &
\C_0 \ar[d]^{\Psi_0} \\
\hofib(d_{20},\Psi_0(y_0)) \ar[r]  &
\D_2  \ar@{->>}[r]^{d_{20}} &
\D_0
}
\]
are $(k+1)$-equivalences for any choice of basepoints $y_0 \in \C_0$ and $y_1 \in \C_1$.
Lemma \ref{L:elementary facts on Lambda}\ref{L:elementary facts on Lambda 3} shows that $\Lambda(\C,\ul{y}) \xto{\Lambda(\Psi)} \Lambda(\D,\Psi(\ul{y}))$ is a $k$-equivalence for any basepoint $\ul{y}$ of $\C$.
\hfill QED.

This completes step (c) of the proof.
We turn to the technical proof of step (d).

\noindent
{\em Claim 5:} The morphisms $\A \xto{\Phi} \B \xto{\Pi} \D$ and $\A \xto{\Pi} \C \xto{\Psi} \D$ are homotopic (Definition \ref{D:homotopy in catsq}).

\noindent
{\em Proof:}
The plan is define an object $\H \in \catsq$, a morphism $\A \xto{\Upsilon} \H$ and a homotopy $\H \xto{\Xi_p} \D$ parameterized by $0 \leq p \leq 1$, such that $(\Xi \circ \Upsilon)|_{p=0} = \Pi \circ \Phi$ and $(\Xi \circ \Upsilon)|_{p=1} = \Psi \circ \Pi$.

By applying the functor $\map^G(-,XYZ)$ to the commutative square of inclusions
\[
\xymatrix{
YZ \coprod X'Z \ar@{^(->}[r] \ar@{^(->}[d] & X'YZ \ar@{^(->}[d] \\
YZ \coprod XZ \ar@{^(->}[r] & XYZ
}
\]
we obtain the following object $\H$ in $\catsq$.
\[
\H = 
\vcenter{
\xymatrix{
\map^G(XYZ,XYZ) \ar[rr]^(0.4){(i_{YZ}^*,i_{XZ}^*)} \ar@{->>}[d]_{i^*} & &
\map^G(YZ,XYZ) \times \map^G(XZ,XYZ) \ar@{->>}[d]^{\id \times i^*}
\\
\map^G(X'YZ,XYZ) \ar[rr]^(0.4){(i_{YZ}^*,i_{X'Z}^*)} & &
\map^G(YZ,XYZ) \times \map^G(X'Z,XYZ).
}
}
\]
Define maps $\Upsilon_i \colon \A_i \to \H_i$ ($i=0,\dots,3$) as follows (we use the indexing as in Definition \ref{D:cat sq}).
\begin{eqnarray}
\label{E:def Upsilon}
&& \Upsilon_1, \Upsilon_3 \colon \map^G(AY,XY) \xto{J_Z} \map^G(AYZ,XYZ) \\
\nonumber
&& \Upsilon_0,\Upsilon_2 \colon \map^G(Y,XY) \times \map^G(A,XY) \xto{J_Z \times J_Z} \map^G(YZ,XYZ) \times \map^G(AZ,XYZ).
\end{eqnarray}
They give rise to a morphism $\Upsilon \colon \A \to \H$ in $\catsq$ by commutativity of the square in \eqref{E:JZ incl}.

We now define homotopies $(\Xi_i)_p \colon \H_i \to \D_i$ parameterized by $0 \leq p \leq 1$. 
Apply $\map^G(-,XYZ)$ to the homotopies $\Theta_p \colon A \times YZ \times I \to AYZ$ in Definition \ref{D:Thetap homotopy} to obtain the maps
\begin{equation}\label{E:def Xi-13}
(\Xi_3)_p \colon  \H_3 \xto{(\Theta_p)^*} \D_3 \qquad \text{  and  }  \qquad (\Xi_1)_p \colon  \H_1 \xto{(\Theta_p)^*} \D_1.
\end{equation}
Definition \ref{D:Thetap homotopy} and Proposition \ref{P:tilde theta} show that $\Theta_p|_{t=0} \defeq \Theta_p|_{A \times YZ \times \{0\}}$ factors through the inclusion $YZ \subseteq AYZ$ and that  $\Theta_p|_{t=1} \defeq \Theta_p|_{A \times YZ \times \{1\}}$ factors through the inclusion $AZ \subseteq AYZ$.
By applying $\map^G(-,XYZ)$ to these square homotopies we obtain the maps
\begin{equation}\label{E:def Xi-02}
(\Xi_2)_p \colon \H_2 \xto{(\Theta_p|_{t=0})^* \times (\Theta_p|_{t=1})^*} \D_2 \qquad \text{and} \qquad
(\Xi_0)_p \colon \H_2 \xto{(\Theta_p|_{t=0})^* \times (\Theta_p|_{t=1})^*}  \D_0.
\end{equation}
Naturality of the construction of $\Theta$ with respect to the inclusion $X' \subseteq X$ implies the commutativity of the squares
\[
\xymatrix{
\H_3 \ar[r]^{h_{31}} \ar[d]_{(\Xi_3)_p} &
\H_1 \ar[d]^{(\Xi_1)_p} 
\\
\D_3 \ar[r]^{d_{31}} & \D_1
}
\qquad
\xymatrix{
\H_2 \ar[r]^{h_{20}} \ar[d]_{(\Xi_2)_p} &
\H_0 \ar[d]^{(\Xi_0)_p} 
\\
\D_2 \ar[r]^{d_{20}} & \D_0.
}
\]
Furthermore, by applying $\map^G(-,XYZ)$ to the commutative square
\begin{equation}\label{E:Xi 0-2 commutativity key}
\vcenter{
\xymatrix{
(A \times YZ) \sqcup (A \times YZ) \ar[rr]^(0.6){\incl_0 + \incl_1} \ar[d]_{\Theta_p|_{t=0} \sqcup \Theta_p|_{t=1}} & &
A \times YZ \times I \ar[d]^{\Theta_p}
\\
YZ \sqcup AZ \ar[rr]_{i_{YZ} + i_{AZ}} & &
AYZ.
}
}
\end{equation}
we obtain the commutativity of 
\[
\xymatrix{
\H_3 \ar[r]^{h_{32}} \ar[d]_{(\Xi_3)_p} &
\H_2 \ar[d]^{(\Xi_2)_p} 
\\
\D_3 \ar[r]^{d_{32}} & \D_2
}
\qquad
\xymatrix{
\H_1 \ar[r]^{h_{10}} \ar[d]_{(\Xi_1)_p} &
\H_0 \ar[d]^{(\Xi_0)_p} 
\\
\D_1 \ar[r]^{d_{10}} & \D_0
}
\]
Therefore $\Xi_0,\dots,\Xi_3$ give rise to a homotopy $\Xi \colon I \times \H \to \D$ in $\catsq$.
Composition with $\Upsilon \colon \A \to \H$ gives a homotopy $\Xi \circ \Upsilon \colon I \times \A \to \D$ parameterized by $p \in I$.

It remains to show that $(\Xi \circ \Upsilon)|_{p=0} =\Psi \circ \Pi$ and that $(\Xi \circ \Upsilon)|_{p=1} =\Pi \circ \Phi$.
We start with $p=1$.
By Proposition \ref{P:def alpha and beta via Theta}, $\Theta|_{p=1}=\beta$  is the natural map $A \times YZ  \times I \xto{\pi} A(YZ) \cong AYZ$ in \eqref{E:join pushout}. 
By the definition of $\Phi_1, \Phi_3$ in \eqref{E:def Phi} and $\Pi_1, \Pi_3$ in \eqref{E:def Pi B to D} and $\Upsilon_1, \Upsilon_3$ in \eqref{E:def Upsilon} and $\Xi_1, \Xi_3$ in \eqref{E:def Xi-13}, it follows that for $i=1,3$
\[
(\Xi_i)|_{p=1} \circ \Upsilon_i = \beta^* \circ J_Z = \pi^* \circ J_Z = \Pi_i \circ \Phi_i.
\]
Proposition \ref{P:def alpha and beta via Theta} shows that $\Theta_{p=1}|_{t=0}=\beta|_{A \times YZ \times \{0\}}$ is the projection $A \times YZ \to YZ$ and that $\Theta_{p=1}|_{t=1}=\beta|_{A \times YZ \times \{1\}}$ is the composition of the projection $A \times YZ \to A$ followed by the inclusion into $AZ$.
Since $(\Xi_0)_p$ and $(\Xi_2)_p$ are obtained by applying $\map^G(-,XYZ)$ to the first column of \eqref{E:Xi 0-2 commutativity key}, the commutative triangle in \eqref{E:JZ incl} together with \eqref{E:def Phi} and \eqref{E:def Pi B to D} show that for $i=0,2$ 
\[
(\Xi_i)_{p=1} \circ \Upsilon_i = 
(\pi_{YZ}^* \circ J_Z) \times (\pi_A^* \circ (i_A^{AZ})^* \circ J_Z) = 
(\pi_{YZ}^* \circ J_Z) \times (\pi_A^* \circ \incl_*)  = \Pi_i \circ \Phi_i.
\]
It follows that
\[
(\Xi \circ \Upsilon)|_{p=1} = \Pi \circ \Phi.
\]
It remains to show that $(\Xi \circ \Upsilon)|_{p=0} = \Psi \circ \Pi$.
First, we claim that for $i=1,3$
\[
(\Xi_i)_{p=0} \circ \Upsilon_i = \alpha^* \circ J_Z= \psi_{A\times I,Y,XY,Z} \circ \pi^* =\Psi_i \circ \Pi_i.
\]
The first equality follows from the definitions of $\Xi_i$ and $\Upsilon_i$ in \eqref{E:def Xi-13} and \eqref{E:def Upsilon}, where $i=1,3$, and from Definition \ref{D:alpha and beta};
The second equality follows from the commutative square \eqref{E:JZ alpha} in Proposition \ref{P:more on alpha beta restricted}, and the third from the definitions of $\Psi_i$ and $\Pi_i$ in \eqref{E:def Psi} and \eqref{E:def Pi A to C}.

Let $i=0,2$.
We claim that 
\begin{multline*}
(\Xi_i)_{p=0} \circ \Upsilon_i = (\alpha|_{A \times YZ \times\{0\}}{}^* \times \alpha|_{A \times YZ \times\{1\}}{}^*) \circ (J_Z \times J_Z) = 
\\
(\psi_{A,Y,XY,Z} \circ (\pi_{A\times Y}^Y)^*) \times (\psi_{A,Y,XY,Z} \circ (\pi_{A\times Y}^A)^*) = \Psi_i \circ \Pi_i.
\end{multline*}
The first equality follows from the definition of $\Upsilon_i$ and $\Xi_i$ in \eqref{E:def Upsilon} and \eqref{E:def Xi-13};
The second follows from \eqref {E:JZ alpha0} and \eqref{E:JZ alpha1} in Proposition \ref{P:more on alpha beta restricted}, and the third from the definition of $\Psi_i$ and $\Pi_i$ in \eqref{E:def Psi} and \eqref{E:def Pi A to C}.
It follows that $(\Xi \circ \Upsilon)|_{p=0} = \Psi \circ \Pi$.
\hfill Q.E.D

This completes step (d) of the proof. 
We are now ready to complete the proof of the proposition as outlined in step (e).

Claims 3 and 4 and the functoriality of $\Lambda$ imply that $\Lambda(\A,\ul{x}) \xto{\Lambda(\Psi \circ \Pi)} \Lambda(\D,\Psi\circ \Pi(\ul{x}))$ is a $k$-equivalence for any choice of basepoint $\ul{x}$ in $\A$.
Claim 5 together with Lemma \ref{L:homotopic morphisms in catsq} show that $\Lambda(\A,\ul{x}) \xto{\Lambda(\Pi \circ \Phi)} \Lambda(\D,\Pi\Phi(\ul{x}))$ is a $k$-equivalence.
From Claim 3 and the functoriality of $\Lambda$ we deduce that $\Lambda(\A,\ul{x}) \xto{\Lambda(\Phi)} \Lambda(\B,\Phi(\ul{x}))$ is a $k$-equivalence for any basepoint $\ul{x}$ in $\A$.

By hypothesis \ref{P:stabilisation key to WH reduction hyp 4}, $\conn (XY) \geq \dim X + k+1$ and therefore also $\conn(XYZ) \geq \dim X + k+1$, see for example \cite[Lemma 2.3]{Mil56II}.
Thanks to hypothesis \ref{P:stabilisation key to WH reduction hyp 0}, we may apply  Corollary \ref{C:maps with highly connected target Y} to $X' \subseteq X$ and to $XY$ and $XYZ$ and deduce, in light of the definitions of $\A$ and $\B$, that the horizontal arrows in the following square are $(k+1)$-equivalences.
\[
\xymatrix{
\A_2 \ar@{->>}[r]^{a_{20}} \ar[d]_{\Phi_2} &
\A_0 \ar[d]^{\Phi_0} 
\\
\B_2 \ar@{->>}[r]^{b_{20}} & \B_0
}
\]
In particular, their fibres are $(k+1)$-connected, and therefore $\Phi_2$ and $\Phi_0$ induce $k$-equivalences among them.
We have already seen that $\Lambda(\A,\ul{x}) \xto{\Lambda(\Phi)} \Lambda(\B,\Phi(\ul{x}))$ are $k$-equivalences, therefore we may apply Lemma \ref{L:elementary facts on Lambda}\ref{L:elementary facts on Lambda 4} to deduce that in the commutative square
\[
\xymatrix{
\A_3 \ar@{->>}[r]^{a_{31}} \ar[d]_{\Phi_3} & 
\A_1  \ar[d]^{\Phi_1} 
\\
\B_3 \ar@{->>}[r]^{b_{31}} &
\B_1
}
\]
the vertical arrows induce $k$-equivalences on all the fibres of $a_{31}$ and $b_{31}$.
Given the definition of $\Phi_1$ and $\Phi_3$ in \eqref{E:def Phi}, this is exactly the claim of this proposition.
\end{proof}

\section{The limit groups}

The purpose of this section is to prove Proposition \ref{P:stable term X} below.
It will make an essential use of equivariant stable homotopy theory.
Our main reference is Lewis-May-Steinberg \cite{LMS86}.
Let $G$ be a finite group.
We will fix a universe $\U$ for the representations of $G$ and an indexing set $\A$ for $\U$ \cite[Definition 2.1]{LMS86}.
A spectrum $E$ is a collection of pointed $G$-spaces $E(V)$ for every $V \subseteq \A$, subject to some conditions \cite[Def. 2.1]{LMS86}.
There is a functor $\Sigma^\infty$ from the category of pointed $G$-spaces to the category of $G$-spectra.
It has a right adjoint $\Omega^\infty$ which assigns to a spectrum $E$ its zeroth space \cite[Proposition 2.3]{LMS86}.
We denote, as usual, $Q=\Omega^\infty\circ \Sigma^\infty$ \cite[p. 14]{LMS86}.
If $A$ is a pointed $G$-space, the function spectrum $F(A,E)$ is obtained by applying $F(A,-)$ to each space of $E$ \cite[Def. 3.2]{LMS86}.
If $H \leq G$, there is a fixed point spectrum $E^H$ which is obtained, once again, by taking $H$-fixed points of the spaces of $E$ \cite[Def. 3.7]{LMS86}.
It has a structure of a $WH$-spectrum where $WH=N_GH/H$.

The Borel construction of a $G$-space $X$ is the orbit space $EG \times_G X = (EG \times X)/G$.
Let $X$ be a pointed $G$-space.
Denote $EG_+ \wedge_G X \defeq (EG_+ \wedge X)/G$.
The following important result \cite[Section V.11]{LMS86} gives a complete description of the fixed point spectrum $(\Sigma^\infty X)^G$ 
\[
(\Sigma^\infty X)^G \simeq \bigvee_{H}  \Sigma^\infty \left( EWH_+ \underset{WH}{\wedge} X^H\right)
\]
where $H$ runs through representatives of the conjugacy classes of the subgroups of $G$ and where $WH=N_GH/H$ acts in the natural way on $X^H$.

Let $U$ be a real representation of $G$.
Let $S^U$ denote the one point compactification of $U$ with basepoint $\alpha_1=\infty$.
Clearly $\alpha_0 =0 \in U$ is also fixed by $G$.
Notice that for any $H \leq G$ we have $(S^U)^H = S^{V}$ where $V=U^H$.

Let $C(f)$ denote the mapping cone of a map $f \colon A \to B$ of unpointed $G$-spaces; it is equipped with a natural basepoint (the ``tip of the cone'').
If $A \subseteq B$ we write $C(B,A)$ for the mapping cone of the inclusion.

For a $G$-space $X$ with fixed points $x_0,x_1$, denote by $\PP_{x_0,x_1}X$ the space of paths $\omega \colon I \to X$ with $\omega(0)=x_0$ and $\omega(1)=x_1$.
It has a natural action of $G$.

\begin{lemma}\label{L:vanishing homology and stable homotopy}
\begin{enumerate}
\item
\label{I:vanishing homology:1}
Let $X$ be a pointed finite CW-complex such that $\tilde{H}_i(X)=0$
for all $0 \leq i \leq m$.  Then $\pi_i \Sigma^\infty X =0$ for all $0 \leq i \leq m$.

\item
\label{I:vanishing homology:2}
Let $X$ be a finite $G$-CW complex such that $\tilde{H}_i(X)=0$ for all $0 \leq i \leq m$.
Then $\pi_i\Sigma^\infty (EG_+ \wedge_G X)=0$ for all $0 \leq i \leq m$.
\end{enumerate}
\end{lemma}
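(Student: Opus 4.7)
The plan for Part (1) is the standard suspension–Hurewicz reduction. For each $N \geq 1$ the iterated reduced suspension $\Sigma^N X$ is simply connected, and the suspension isomorphism gives $\tilde{H}_j(\Sigma^N X) \cong \tilde{H}_{j-N}(X)$, which vanishes for every $j \leq m+N$. The classical Hurewicz theorem for simply connected spaces then forces $\Sigma^N X$ to be $(m+N)$-connected, so $\pi_{i+N}(\Sigma^N X) = 0$ for all $i \leq m$ and all $N \geq 1$. Passing to the colimit in $N$ yields $\pi_i \Sigma^\infty X = 0$ in the claimed range.

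For Part (2) the plan is to reduce to Part (1) by first proving the homological vanishing $\tilde{H}_i(EG_+ \wedge_G X) = 0$ for $i \leq m$. The reduction rests on the identification
\[
EG_+ \wedge_G X \;=\; (EG \times_G X)/BG,
\]
where $BG = EG \times_G \{\alpha\}$ arises from the $G$-fixed basepoint $\alpha \in X$, and hence $\tilde{H}_*(EG_+ \wedge_G X) \cong H_*(EG \times_G X, BG)$. To compute the latter I would invoke the Serre spectral sequence of the fibration pair $(X,\alpha) \to (EG \times_G X, BG) \to BG$, whose second page reads
\[
E^2_{p,q} = H_p(BG;\tilde{H}_q(X)) \;\Longrightarrow\; H_{p+q}(EG \times_G X,BG),
\]
with a local coefficient system arising from the $\pi_1(BG) = G$ action on $\tilde{H}_q(X)$. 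Under the hypothesis $\tilde{H}_q(X) = 0$ for $q \leq m$, every entry with $q \leq m$ of $E^2$ vanishes, hence so does the abutment in total degree $\leq m$. The conclusion then follows by applying Part (1) to the pointed CW complex $EG_+ \wedge_G X$: note that the argument in Part (1) never used finiteness, only that the space is a pointed CW complex, so it applies verbatim despite $EG_+ \wedge_G X$ being infinite-dimensional.

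The step I expect to be most delicate is the relative Serre spectral sequence setup: one must check that the basepoint section $BG \hookrightarrow EG \times_G X$ really is a cofibration of the bundle fitting into a fibration of pairs, and that the local coefficient system is the natural one. However, because $\tilde{H}_q(X)$ itself vanishes in the range of interest, the twisted nature of the coefficients plays no role in the vanishing conclusion, so the argument reduces to the routine fact that a vanishing $E^2$-page forces a vanishing abutment.
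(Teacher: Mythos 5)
Your argument is correct, and both parts succeed; the route differs from the paper's mainly in Part~(1). For Part~(1) you suspend, invoke the Hurewicz theorem, and pass to the colimit (the implicit use of $\tilde H_0(X)=0$ to get $X$ connected, hence $\Sigma^N X$ simply connected for $N\geq 1$, is legitimate); the paper instead reads the vanishing straight off the Atiyah--Hirzebruch spectral sequence $\tilde H_i(X;\pi_j\mathbf{S}) \Rightarrow \pi_{i+j}\Sigma^\infty X$. These are essentially the same stable Hurewicz statement obtained by two standard mechanisms; the AHSS version is a one-liner and sidesteps the connectivity bookkeeping, but your version is more elementary. For Part~(2) the two proofs are essentially identical: both use the Serre spectral sequence of the Borel fibration $X \to EG\times_G X \to BG$ and then apply Part~(1). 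The cosmetic difference is that you run the \emph{relative} Serre spectral sequence for the pair $(EG\times_G X, BG)$ with fiber $(X,\alpha)$, whereas the paper takes the cofibre sequence $BG \to X_{hG} \to EG_+\wedge_G X$, observes that $BG$ retracts off $X_{hG}$ via the fixed basepoint, and uses the absolute Serre spectral sequence together with that splitting to conclude $H_i(BG)\to H_i(X_{hG})$ is an isomorphism for $i\leq m$. The paper's version avoids having to set up the relative spectral sequence and the cofibration check you flag as the delicate step; your version is fine once you note that $\{\alpha\}\hookrightarrow X$ is a $G$-cofibration (a subcomplex inclusion), so $BG\hookrightarrow EG\times_G X$ is a cofibration and $\tilde H_*(EG_+\wedge_G X)\cong H_*(EG\times_G X, BG)$ as claimed.
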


\begin{proof}
\ref{I:vanishing homology:1}.
This follows from Atiyah-Hirzebruch spectral sequence $\tilde{H}_i(X,E_j(*)) \Rightarrow \tilde{E}_{i+j}(X)$ applied to the sphere spectrum $E=\mathbf{S}$.

\ref{I:vanishing homology:2}.
There is a $G$-cofibre sequence where $EG$ retracts off $EG \times X$ equivariantly (via the basepoint of $X$).
\[
\xymatrix{
EG \ar[r] &
EG \times X \ar[r] \ar@/_/ @{-->}[l] &
EG_+ \wedge X
}
\]
By taking $G$-orbits we get a cofibre sequence
\[
\xymatrix{
BG \ar[r] &
X_{hG} \ar[r] \ar@/_/ @{-->}[l] &
EG_+ \wedge_G X
}
\]
with $BG$ retracting off $X_{hG}=EG \times_G X$.
The Serre spectral sequence $H_i(BG,H_j(X)) \Rightarrow H_{i+j}(X_{hG})$ of the fibration $X_{hG} \to BG$ shows that $H_i(BG) \to H_i(X_{hG})$ is an isomorphism for all $0 \leq i \leq m$ and therefore $\tilde{H}_i(EG_+ \wedge_G X)=0$ for all $0 \leq i \leq m$.
The result follows from item \ref{I:vanishing homology:1}.
\end{proof}

\begin{lemma}\label{L:mapping cone dimension shift}
Let $V$ be a $G$-representation and $X \subseteq V$ a finite $G$-CW complex.
Set $n=\dim V$.
Then there are isomorphisms for all $0 \leq i \leq n-2$
\[
\pi_{i+1} \Sigma^\infty \left( EG_+ \wedge_G C(S^V,S^V \setminus X)\right) \cong
\pi_i \Sigma^\infty (EG_+ \wedge_G (S^V \setminus X))
\]
\end{lemma}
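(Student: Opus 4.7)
The plan is to derive the isomorphism from the long exact sequence of stable homotopy groups attached to a cofibre sequence having $S^V$ as its middle term, and then to show that $S^V$ contributes nothing in the relevant range via Lemma \ref{L:vanishing homology and stable homotopy}\ref{I:vanishing homology:2}. Concretely, since the basepoint $\infty\in S^V$ lies outside $V\supseteq X$, both $S^V\setminus X$ and $S^V$ are based $G$-spaces and the inclusion yields a cofibre sequence of based $G$-spaces
\[
S^V\setminus X\to S^V\to C(S^V,S^V\setminus X).
\]
Applying $EG_+\wedge_G(-)$, a colimit-preserving construction on based $G$-spaces, and then $\Sigma^\infty$ produces a cofibre sequence of spectra, and the associated long exact sequence in stable homotopy contains the segment
\[
\pi_{i+1}\Sigma^\infty(EG_+\wedge_G S^V)\to\pi_{i+1}\Sigma^\infty(EG_+\wedge_G C(S^V,S^V\setminus X))\to\pi_i\Sigma^\infty(EG_+\wedge_G(S^V\setminus X))\to\pi_i\Sigma^\infty(EG_+\wedge_G S^V).
\]

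Since $S^V$ is non-equivariantly homeomorphic to $S^n$ with $n=\dim V$, one has $\tilde H_j(S^V)=0$ for all $0\leq j\leq n-1$. Lemma \ref{L:vanishing homology and stable homotopy}\ref{I:vanishing homology:2} applied with $m=n-1$ then gives $\pi_j\Sigma^\infty(EG_+\wedge_G S^V)=0$ for all $0\leq j\leq n-1$. For $0\leq i\leq n-2$ one has $i+1\leq n-1$, so both flanking terms in the displayed exact sequence vanish, and the middle connecting map is the asserted isomorphism.

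The only point requiring any care is the verification that the mapping-cone cofibre sequence behaves well under $EG_+\wedge_G(-)$ and $\Sigma^\infty$ in the ambient convenient category of $G$-spaces; this is standard (the mapping cone is constructed from a pushout, which both functors preserve), so I do not anticipate a serious obstacle.
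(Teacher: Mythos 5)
Your proof is correct and takes essentially the same approach as the paper: the paper likewise invokes Lemma \ref{L:vanishing homology and stable homotopy}\ref{I:vanishing homology:2} to kill $\pi_j\Sigma^\infty(EG_+\wedge_G S^V)$ for $0\le j\le n-1$ and then reads the isomorphism off the long exact sequence of the cofibration $EG_+\wedge_G(S^V\setminus X)\to EG_+\wedge_G S^V\to EG_+\wedge_G C(S^V,S^V\setminus X)$. You merely spell out the relevant segment of the exact sequence and the index bookkeeping, which the paper leaves implicit.
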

\begin{proof}
Lemma \ref{L:vanishing homology and stable homotopy}\ref{I:vanishing homology:2} shows that $\pi_i \Sigma^\infty(EG_+ \wedge_G S^V)=0$ for $0 \leq i \leq n-1$.
The long exact sequence in stable homotopy groups of the cofibration $EG_+\wedge_G (S^V \setminus X) \to EG_+ \wedge_G S^V \to EG_+ \wedge_G C(S^V,S^V \setminus X)$ gives the result.
\end{proof}

\begin{prop}\label{P:stable term X}
Let $U$ be a representation of $G$ and let $X \subseteq U$ be a finite $G$-CW complex.
Let $k \geq 0$.
Assume that for any $H \in \Iso_G(X)$
\begin{enumerate}[label=(\alph*)]
\item 
$\dim X^H < \dim U^H$.
\label{P:stable term X:1}

\item
$\dim X^H - \dim \left(\bigcup_{K \gneq H}X^K\right) \ > \ k+1$.
\label{P:stable term X:2}

\item
$(S^U \setminus X)^H$ is $WH$-equivariantly homotopy equivalent to a $WH$-CW complex.
\label{P:stable term X:3}
\end{enumerate}
Then $\map^G(X,S^U)$ is path connected and for all $1 \leq i \leq k+1$ 
\begin{equation}\label{E:stable term X:1}
\pi_i \map^G(X,S^U) \cong \bigoplus_{(H) \subseteq \Iso_G(X)} \pi_i \Sigma^{\infty}\left( EWH_+ \underset{WH}{\wedge} C(S^U,S^U \setminus X)^H \right). 
\end{equation}
If in addition 
\begin{enumerate}[label=(\alph*)]
\setcounter{enumi}{3}
\item
there exists a $G$-map $X \xto{\eta} \PP_{\alpha_0,\alpha_1}S^U$ such that $X^H \xto{\eta} \PP_{\alpha_0,\alpha_1}(S^U)^H$ is a $(\dim X^H+k)$-equivalence for any $H \in \Iso_G(X)$
\label{P:stable term X:4}
\end{enumerate}
then for any basepoint $f \in \map^G(X,X)$ and every $0 \leq i \leq k$ there are isomorphisms (bijection for $i=0$):
\begin{equation}\label{E:stable term X 2}
\pi_i \map^G(X,X) \cong \bigoplus_{(H) \subseteq \Iso_G(X)} \pi_i \Sigma^\infty\left(EWH_+ \underset{WH}{\wedge} (S^U \setminus X)^H\right).
\end{equation}
\end{prop}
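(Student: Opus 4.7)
The plan is to prove \eqref{E:stable term X:1} by induction along the isotropy filtration $\emptyset = X_0 \subseteq X_1 \subseteq \cdots \subseteq X_r = X$ of Section \ref{S:filtration}, matching each filtration layer with one summand of the tom Dieck splitting of $\Sigma^\infty C(S^U, S^U \setminus X)$, and then to deduce \eqref{E:stable term X 2} from \eqref{E:stable term X:1} via hypothesis \ref{P:stable term X:4}, the adjunction of \ref{V:eta}, and the degree shift of Lemma \ref{L:mapping cone dimension shift}.

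For \eqref{E:stable term X:1}, at each stage $q$ Proposition \ref{P:isom on fibres in F_q filration} identifies the fibres of the Serre fibration $\map^G(X_q, S^U) \to \map^G(X_{q-1}, S^U)$ with those of the restriction $\map^{WH}(X^H, S^{U^H}) \to \map^{WH}(X_{q-1}^H, S^{U^H})$ for $H = H_q$. Proposition \ref{P:JZ and WH}\ref{P:JZ and WH:free action of WH on X-X'} guarantees that $WH$ acts freely on $X^H \setminus X_{q-1}^H$, and hypothesis \ref{P:stable term X:3} provides the $WH$-CW structures needed to carry out Spanier-Whitehead duality $WH$-equivariantly on the pair $(X^H, X_{q-1}^H) \subseteq S^{U^H}$. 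The intention is to identify the fibre as $\Omega^\infty \Sigma^\infty(EWH_+ \wedge_{WH} Z_{q,H})$, where $Z_{q,H}$ is the layer at height $q$ in the filtration of $C(S^U, S^U \setminus X)^H$ induced by $X_q^H$; the free $WH$-action on the non-basepoint part of $X^H/X_{q-1}^H$ collapses the tom Dieck splitting of the relevant $WH$-spectrum to this single summand. Hypotheses \ref{P:stable term X:1} and \ref{P:stable term X:2} keep us in the stable range for all $i \leq k+1$, so the tower's long exact sequences split (the splittings coming from Borel-construction sections) and the layers assemble into $\pi_i$ of the tom Dieck splitting of $\Sigma^\infty C(S^U, S^U \setminus X)$. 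Layers with $H_q \notin \Iso_G(X)$ contribute nothing because $X_q^{H_q} = X_{q-1}^{H_q}$, which accounts for the restriction of the index set to $\Iso_G(X)$; path connectedness of $\map^G(X, S^U)$ is then automatic from $\map^G(X_0, S^U) = \{*\}$ through $k$-connected fibres.

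For \eqref{E:stable term X 2}, post-composition with $\eta$ defines $\eta_*\colon \map^G(X, X) \to \map^G(X, \PP_{\alpha_0, \alpha_1} S^U)$, and Lemma \ref{L:k equivalence on fibres} with $B = \emptyset$ and $A = X$ turns hypothesis \ref{P:stable term X:4} into the assertion that $\eta_*$ is a $k$-equivalence. The adjunction in \ref{V:eta} identifies the target with $\map^G(\Sigma X, S^U; \id_{\{\alpha_0, \alpha_1\}})$; writing $\Sigma X = CX \cup_X CX$ as a pushout and using that each factor $\map^G_*(CX, S^U; \alpha_i) \simeq \PP_{\alpha_i} \map^G(X, S^U)$ is contractible, the pullback description identifies this with the path space $\PP_{\bar\alpha_0, \bar\alpha_1} \map^G(X, S^U)$, where $\bar\alpha_i$ denotes the constant map at $\alpha_i$. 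The map $\eta$ itself furnishes a $G$-equivariant path $t \mapsto (x \mapsto \eta(x)(t))$ joining these two constants, so the path space is equivalent to the based loop space and
\[
\pi_i \map^G(X, X) \cong \pi_{i+1} \map^G(X, S^U) \qquad (0 \leq i \leq k).
\]
Applying \eqref{E:stable term X:1} in degree $i+1$ and then Lemma \ref{L:mapping cone dimension shift} summand by summand (converting $C(S^{U^H}, S^{U^H} \setminus X^H)$ at degree $i+1$ to $(S^{U^H} \setminus X^H)$ at degree $i$ within the stable range guaranteed by hypothesis \ref{P:stable term X:1}) produces \eqref{E:stable term X 2}.

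The hardest step will be the explicit layer-by-layer identification in Part 1 of the fibre with the correct tom Dieck summand: carrying out $WH$-equivariant Spanier-Whitehead duality on a finite $WH$-CW pair, matching the natural filtration of $C(S^U, S^U \setminus X)^H$ to the fibration tower, and checking that the splittings are compatible enough to assemble the layers into a single tom Dieck splitting rather than merely an associated graded. Once this identification is in place, both the direct-sum assembly and the reduction of Part 2 become formal manipulations in the stable range provided by hypotheses \ref{P:stable term X:1}--\ref{P:stable term X:3}.
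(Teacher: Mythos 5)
Your Part~2 reduction matches the paper's argument essentially verbatim: compose with $\eta$, invoke Lemma~\ref{L:k equivalence on fibres}, identify $\map^G(X,\PP_{\alpha_0,\alpha_1}S^U)\cong\PP_{\alpha_0,\alpha_1}\map^G(X,S^U)\simeq\Omega\map^G(X,S^U)$ using the already-established path-connectedness, and then shift degree with Lemma~\ref{L:mapping cone dimension shift}. The overall strategy for Part~1 (induction up the isotropy filtration, reduction via Proposition~\ref{P:isom on fibres in F_q filration} to a $WH$-statement, freeness of the $WH$-action off $X_{q-1}^H$) is also the paper's.

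Where you diverge, and where your plan as written has a gap, is precisely the step you flag as hardest. You propose to identify the \emph{fibre} of $\map^{WH}(X^H,S^V)\to\map^{WH}(X_{q-1}^H,S^V)$ directly with a single tom Dieck summand $\Omega^\infty\Sigma^\infty(EWH_+\wedge_{WH}Z_{q,H})$ via $WH$-duality for the pair $(X^H,X_{q-1}^H)$, and then argue that the splittings ``coming from Borel-construction sections'' assemble the tower. This is harder to make precise than you allow: the $V$-dual of $X^H/X_{q-1}^H$ is not literally the layer in $C(S^U,S^U\setminus X)^H$ but the \emph{cofibre} of the map $C(S^V,S^V\setminus X^H)\to C(S^V,S^V\setminus X_{q-1}^H)$, so you would still owe an argument that this cofibre has the same stable homotopy as $C(S^V,S^V\setminus X^H)$ in the range $i\le k+1$, and that the long exact sequence of the tower splits. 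You also never enter the stable world: at some point you must replace $S^V$ by $QS^V$ (the paper does this via Freudenthal, giving a $(k{+}1)$-equivalence on fibres) before any duality or tom~Dieck splitting is available; without that step the function complexes are not $\Omega^\infty$ of anything.

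The paper avoids both difficulties by a different move. It does not dualize the fibre; it applies $V$-duality and the tom~Dieck splitting to the entire restriction map of function spectra $F(X^H_+,\Sigma^\infty S^V)\to F((X_{q-1}^H)_+,\Sigma^\infty S^V)$, obtaining a map of wedges indexed by conjugacy classes $(K)$ in $WH$. For $K\ne 1$ the summands are equivalences because $X^L=X_{q-1}^L$ for $L\supsetneq H$. For $K=1$ it shows the \emph{target} summand $\Sigma^\infty(EWH_+\wedge_{WH}C(S^V,S^V\setminus X_{q-1}^H))$ has vanishing $\pi_i$ for $i\le k+1$; this is the content of Lemma~\ref{L:vanishing homology and stable homotopy}\ref{I:vanishing homology:2} fed by an Alexander-duality estimate $\tilde H_i(S^V\setminus X_{q-1}^H)=0$ for $i\le k+1$, which is where hypothesis~\ref{P:stable term X:2} enters quantitatively. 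That vanishing both identifies the kernel as $\pi_i\Sigma^\infty(EWH_+\wedge_{WH}C(S^V,S^V\setminus X^H))$ and gives the split injectivity of the new summand, with no need to construct a section by hand. If you want to follow your own route you will need to supply the Freudenthal step, the explicit duality identification of the pair, and a vanishing argument of this Alexander-duality type to force the tower to split; alternatively, adopting the paper's map-level dualization avoids all three.
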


\begin{proof}
We will prove by induction on the filtration $\{ X_q\}_{q=0}^r$ of $X$ in \eqref{E:def filtration X_q} that $\map^G(X_q,S^U)$ is path connected and that there are isomorphisms for all $1 \leq i \leq k+1$
\begin{equation}\label{E:stable term X ind step}
\pi_i \map^G(X_q,S^U) \cong \bigoplus_{(H) \subseteq \Iso_G(X_q)} \pi_i \Sigma^\infty \left(EWH_+ \underset{WH}{\wedge} C(S^U , S^U \setminus X)^H\right).
\end{equation}
The base of induction is a triviality since $X_0=\emptyset$.
Assume that \eqref{E:stable term X ind step} holds for $q-1$ and we prove it for $1 \leq q \leq r$.
If $H_q \notin \Iso_G(X)$ then $X_q=X_{q-1}$ and the induction step is trivial.
So we assume that $H=H_q$ is in $\Iso_G(X)$.

Choose some basepoint $f \in \map^G(X_q,S^U)$.
We obtain a fibre sequence (over $f|_{X_{q-1}}$)
\[
F \to \map^G(X_q,S^U) \xto{j^*} \map^G(X_{q-1},S^U).
\]
The hypotheses imply that 
\[
\dim_H (X_q,X_{q-1}) \leq \dim X_q^H=\dim X^H  \leq \dim U^H-1 = \conn (S^U)^H.
\]
We can apply Corollary \ref{C:maps with highly connected target Y} (with $Y=S^U$ and $k=0$) to deduce that $j^*$ is bijective on components and that $\pi_0 F=*$. 
Together with the induction hypothesis on $\map^G(X_{q-1},S^U)$, it follows that $\map^G(X_q,S^U)$ is path connected, as needed.
Therefore we may assume that the basepoint $f$ is the null map.

Since $\Iso_G(X_q) = \Iso_G(X_{q-1}) \cup (H)$, in order to complete the induction step for \eqref{E:stable term X ind step} it remains to show that for every $1 \leq i \leq k+1$

(i) $\pi_i F \to \pi_i \map^G(X_q,S^U)$ is split injective , and

(ii) $\pi_iF \cong \pi_i \Sigma^\infty \left(EWH_+ \wedge_{WH} C(S^U,S^U \setminus X)^H\right)$.

For the rest of the proof set $V=U^H$ and $n=\dim V$.
Proposition \ref{P:isom on fibres in F_q filration} yields the following morphism of fibrations which induces a homeomorphism on the fibres (over the null maps)
\[
\xymatrix{
F \ar[r] \ar[d]_{\cong} &
\map^G(X_q,S^U) \ar@{->>}[r]^{j^*} \ar[d] &
\map^G(X_{q-1},S^U) \ar[d] 
\\
F \ar[r] &
\map^{WH}(X^H,S^V) \ar@{->>}[r]^{j^*} &
\map^{WH}(X_{q-1}^H,S^V).
}
\]
Therefore, we will be finished if we prove (ii) and that $\pi_i F \to \pi_i\map^{WH}(X_{q-1}^H,S^V)$ is split injective for all $1 \leq i \leq k+1$.

Since $QS^V=\colim_{V'} \Omega^{V'}\Sigma^{V'} S^V$ \cite[p. 14]{LMS86}, Freudenthal's theorem implies that the natural map $S^V \to QS^V$ is a non-equivariant $(2n-2)$-equivalence.
Hypotheses \ref{P:stable term X:1} and \ref{P:stable term X:2} imply that 
\[
\dim X^H \geq k+2 + \dim \left(\bigcup_{K \gneq H}X^K\right) \geq  k+1
\]
because $\dim \, (\cup_{K \gneq H} X^K) \geq -1$.
In particular
\[
\dim (X^H,X_{q-1}^H) + k+1 \leq \dim X^H + k+1 \leq 2\cdot \dim X^H  \leq 2(n-1) = 2n-2.
\]
Application of Proposition \ref{P:JZ and WH}\ref{P:JZ and WH:free action of WH on X-X'} and Lemma \ref{L:k equivalence on fibres} to $X_{q-1}^H \subseteq X^H$ and $S^V \to QS^V$ shows that in the commutative diagram
\begin{equation}\label{E:stable term X:QSV}
\xymatrix{
F \ar[r] \ar[d] &
\map^{WH}(X^H,S^V) \ar[d] \ar@{->>}[r]^{j^*} &
\map^{WH}(X_{q-1}^H,S^V) \ar[d] 
\\
F' \ar[r] &
\map^{WH}(X^H,QS^V) \ar@{->>}[r]^{\ell^*} &
\map^{WH}(X_{q-1}^H,QS^V).
}
\end{equation}
the map $F \to F'$ between the fibres (over the null maps) is a $(k+1)$-equivalence.
Thus, to complete the induction step of \eqref{E:stable term X ind step} it suffices to prove that for every $1 \leq i \leq k+1$

(i') $\pi_i F' \to \pi_i\map^{WH}(X^H,QS^V)$ is split injective, and 

(ii') $\pi_iF'$ is isomorphic to the groups in (ii).

Let $A$ be a pointed $G$-CW complex.
The definitions of $\Omega^\infty$ and $Q=\Omega^\infty \Sigma^\infty$ \cite[p. 14]{LMS86}, of the function spectra $F(A,E)$ \cite[Prop. 3.6]{LMS86}, and of fixed point spectra \cite[Def. 3.7]{LMS86}, imply that there are natural homeomorphisms
\begin{multline*}
\map^{WH}(A,QS^V) = F(A_+,\Omega^\infty \Sigma^\infty S^V)^{WH}
\cong \left(\Omega^\infty F(A_+,\Sigma^\infty S^V)\right)^{WH} \\
= \Omega^\infty \left(F(A_+,\Sigma^\infty S^V)^{WH}\right).
\end{multline*}
Therefore the fibration $\ell^*$ in the 2nd row of \eqref{E:stable term X:QSV}  is obtained by applying the functor $\Omega^\infty$ and $WH$-fixed points to the morphism of $WH$-spectra
\[
F(X^H_+,\Sigma^\infty S^V) \xto{\ell^*} F((X_{q-1}^H)_+,\Sigma^\infty S^V).
\]
We will now exploit $V$-duality \cite[Chap. III]{LMS86}.
Recall that $C(X,\emptyset)=X_+$ where $X$ is an unpointed space \cite[page 142]{LMS86}.
The definition of $V$-duality \cite[Defn. 3.4]{LMS86} together with the formula for the map $\epsilon/(?)$ \cite[Prop. 3.1]{LMS86} and the construction of $V$-duality for compact $G$-ENRs \cite[Construction 4.5, page 145]{LMS86}, give rise to the following homotopy commutative diagram of $WH$-spectra
\[
\xymatrix{
F(X^H_+,\Sigma^\infty S^V) \ar[r]^{\ell^*} &
F((X_{q-1})^H_+,\Sigma^\infty S^V) 
\\
\Sigma^\infty C(S^V,S^V \setminus X^H) \ar[r]^{\ell^*} \ar[u]^{\epsilon^\#(X^H)} &
\Sigma^\infty C(S^V,S^V \setminus X_{q-1}^H)  \ar[u]_{\epsilon^\#(X_{q-1}^H)}.
}
\]
By definition of $V$-duality (or by construction), the vertical arrows in this diagram are weak equivalences of $WH$-spectra \cite[Def. 4.4]{LMS86}.
By applying the fixed points functor $(-)^{WH}$, and the functor $\Omega^\infty$ we see that in order to prove (i') and (ii') it suffices to prove that 

(i'')  $\pi_i \left(\Sigma^\infty C(S^V,S^V \setminus X^H)\right)^{WH} \xto{\pi_i(\ell^*)} \pi_i \left(\Sigma^\infty C(S^V,S^V \setminus X_{q-1}^H)\right)^{WH}$ is split surjective for all $1 \leq i \leq k+1$ and surjective for $i=k+2$, and

(ii'') the kernels of the homomorphisms in (i'') are isomorphic to the groups in (ii) for all $1 \leq i \leq k+1$.

On the level of spectra, \cite[Section V.11]{LMS86} quoted above shows that the map in (i'') is induced by the map of spectra
\[
\bigvee_{(K)} \Sigma^\infty (EWK_+ \underset{WK}{\wedge} C(S^V,S^V \setminus X^H)^K) \to 
\bigvee_{(K)} \Sigma^\infty (EWK_+ \underset{WK}{\wedge} C(S^V,S^V \setminus X_{q-1}^H)^K)
\]
where the sum is over all the conjugacy classes of subgroups $K \leq WH$ and by $WK$ we mean $N_{WH}(K)/K$.
Any $K \leq WH$ has the form $K=L/H$ for some $H \leq L \leq N_GH$.
If $K \neq 1$ then $L \supsetneq H$ and in this case $X^L=X_{q-1}^L$ and it follows that the maps of the summands corresponding to $K \neq 1$ are equivalences.
It remains to examine the summand $K=1$, namely the map
\begin{equation}\label{E:tom Dieck reduction to K=1}
\Sigma^\infty( EWH_+ \wedge_{WH} C(S^V, S^V \setminus X^H)) \to \Sigma^\infty (EWH_+ \wedge_{WH} C(S^V, S^V \setminus X_{q-1}^H)).
\end{equation}
The hypotheses and Proposition \ref{P:filtration facts G}\ref{P:filtration facts G:fix H} show that 
\[
\dim X_{q-1}^H  \leq \dim X^H -k-2 \leq n-k-3.
\]
In particular $H^i(X_{q-1}^H)=0$ for all $i \geq n-k-2$.
Alexander duality implies that $\tilde{H}_i(S^V \setminus X_{q-1}^H)=0$ for all $0 \leq i \leq k+1$.
Also, $\tilde{H}_i(S^V)=0$ for all $0 \leq i \leq k+1$ since $\conn(S^V)=n-1 \geq \dim X^H \geq k+1$.
There is a cofibre sequence
\[
\xymatrix@1{
EWH_+ \wedge_{WH} (S^V \setminus X_{q-1}^H) \ar@{^(->}[r] & EWH_+ \wedge_{WH} S^V \ar[r]^(0.37)\gamma & EWH_+ \wedge_{WH} C(S^V,S^V \setminus X_{q-1}^H).
}
\]
The long exact sequence in stable homotopy groups together with Lemma \ref{L:vanishing homology and stable homotopy}\ref{I:vanishing homology:2} show that $\pi_i$ of the right hand side of \eqref{E:tom Dieck reduction to K=1} vanishes for $0 \leq i \leq k+1$ and that $\pi_{k+2} \Sigma^\infty \gamma$ is surjective.
Also $\pi_{k+2}$ of \eqref{E:tom Dieck reduction to K=1} is surjective because $\pi_{k+2}\Sigma^\infty\gamma$ factors through it.
In particular (i'') and (ii'') follow and the induction step is complete.

Let $X \xto{\eta} \PP_{\alpha_0,\alpha_1}S^U$ be as in hypothesis \ref{P:stable term X:4}.
Applying Lemma \ref{L:k equivalence on fibres} with $\emptyset \subseteq X$ and with $\eta$ shows that $\map^G(X,X) \to \map^G(X,\PP_{\alpha_0,\alpha_1}S^U)$ is a  $k$-equivalence.
By inspection, and since we have shown that $\map^G(X,S^U)$ is path connected,
\[
\map^G(X,\PP_{\alpha_0,\alpha_1}S^U) \cong \PP_{\alpha_0,\alpha_1}\map^G(X,S^U) \simeq \Omega \map^G(X,S^U).
\]
We have seen that if $H \in \Iso_G(X)$ and $n=\dim U^H$ then $n-1 \geq \dim X^H \geq k+1$.
Lemmas \ref{L:mapping cone dimension shift} and \ref{L:vanishing homology and stable homotopy}\ref{I:vanishing homology:2} apply to show that for $0 \leq i \leq k$ there are isomorphisms (bijection if $i=0$)
\begin{multline*}
\pi_i \map^G(X,X) \cong
\pi_{i+1} \map^G(X,S^U) \cong
\bigoplus_{(H) \subseteq \Iso_G(X)} \pi_{i+1} \Sigma^\infty \left( EWH_+ \wedge_{WH} C(S^U,S^U \setminus X)^H\right) 
\\
\cong
\bigoplus_{(H) \subseteq \Iso_G(X)} \pi_{i} \Sigma^\infty \left( EWH_+ \wedge_{WH} (S^U \setminus X)^H\right) 
\end{multline*}
\end{proof}

\section{Proof of the main theorems}

In this section we fix a finite group $G$ and a sequence of representation $U_1, U_2, \dots$ satisfying hypothesis \ref{H:U} in Section \ref{S:Intro}.
For any $m \leq n$ we write 
\[
U_{m\leq \bullet\leq n} = \oplus_{i=m}^n U_i.
\]
Recall that $\F(U_\bullet)$ is the smallest collection of subgroups of $G$ which contains $\Iso_G(S(V))$ for all $V \in \Irr(U_\bullet)$ and is closed to intersection of groups.
Illman showed in \cite{Illman78} that $S(V)$ is a finite $G$-CW complex for any representation $V$.
Also $S(V)^H=S(V^H)$ is a sphere.

\begin{lemma}\label{L:far in U}
Fix $m \geq 1$ and $k \geq 0$.
Then for every sufficiently large $n$
\begin{enumerate}
\item
\label{L:far in U:1}
$\Iso_G(S(U_{m \leq \bullet \leq n}))=\F(U_\bullet)$. 

\item
\label{L:far in U:2}
$\dim S(U_{m \leq \bullet \leq n})^H \geq k$  for every $H \in \F(U_\bullet)$.

\item
\label{L:far in U:3}
$\dim S(U_{m \leq \bullet \leq n})^H - \dim \bigcup_{K \gneq H} S(U_{m \leq \bullet \leq n})^K \, \geq \, k$ for every $H \in \F(U_\bullet)$.
(In this inequality the dimension of the empty set is $-1$).
\end{enumerate}
\end{lemma}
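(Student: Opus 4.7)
The plan is to exploit the finiteness of $G$---and hence of $\F(U_\bullet)$ and $\Irr(U_\bullet)$---together with hypothesis \ref{H:U}, which guarantees that for any fixed $m$ and any integer $N$, once $n$ is large enough the representation $U_{m\leq\bullet\leq n}$ contains every $V \in \Irr(U_\bullet)$ with multiplicity at least $N$. (Indeed, only finitely many copies of $V$ can lie in $U_1 \oplus \cdots \oplus U_{m-1}$, so the remaining infinitely many accumulate in the tail.) Since there are only finitely many conditions to satisfy in (1)--(3), I would first fix such an $n$ with $N$ chosen larger than any needed bound, write $V = U_{m\leq\bullet\leq n}$, and then verify each clause.

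For (1), the inclusion $\Iso_G(S(V))\subseteq \F(U_\bullet)$ is immediate: a nonzero $v\in V$ decomposes as $v=\sum v_i$ into components in the distinct irreducible summands, and $G_v=\bigcap_{v_i\neq 0} G_{v_i}$ where each $G_{v_i}\in\Iso_G(S(V_i))$ with $V_i\in\Irr(U_\bullet)$. Conversely, given $H\in\F(U_\bullet)$, write $H=\bigcap_{j=1}^{s} K_j$ with $K_j\in\Iso_G(S(W_j))$, $W_j\in\Irr(U_\bullet)$. Choose $w_j\in W_j$ with $G_{w_j}=K_j$ and place each $w_j$ in a \emph{distinct} copy of $W_j$ inside $V$; this is possible because the multiplicity of each $W_j$ in $V$ exceeds $s$ for $n$ large. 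The resulting vector has stabilizer exactly $H$.

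For (2), note that every $H\in\F(U_\bullet)$ is contained in some $K\in\Iso_G(S(W))$ with $W\in\Irr(U_\bullet)$ (take any $K_j$ from the presentation above), hence $W^H\supseteq W^K\neq 0$. Consequently $\dim V^H$ is at least the multiplicity of $W$ in $V$, which is $\geq k+1$ for $n$ large, so $\dim S(V)^H = \dim V^H - 1 \geq k$.

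Part (3) is the main obstacle. Using $\dim S(V)^H = \dim V^H - 1$ and $\dim \bigcup_{K\gneq H}S(V)^K = \max_{K\gneq H}\dim V^K - 1$ (with the convention that this maximum is $0$ when every such $V^K$ vanishes, in which case the claim reduces to (2)), the inequality to verify is $\dim V^H - \dim V^K \geq k$ for every minimal proper overgroup $K\gneq H$. Here I would invoke (1) to produce $v\in V$ with $G_v=H$; writing $v=\sum v_i$, there must exist an index $i$ with $K\not\leq G_{v_i}$, since otherwise $K\leq\bigcap_i G_{v_i}=H$, a contradiction. This component witnesses an irreducible $W_{H,K}\in\Irr(U_\bullet)$ with $\dim W_{H,K}^H > \dim W_{H,K}^K$, and because $V^H$ and $V^K$ decompose irreducible-by-irreducible with $V_i^H \supseteq V_i^K$ for each $i$,
\[
\dim V^H - \dim V^K \;=\; \sum_i \mu(V_i)\bigl(\dim V_i^H - \dim V_i^K\bigr) \;\geq\; \mu(W_{H,K}) \;\geq\; k,
\]
once $n$ is large enough. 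The delicate point is that $W_{H,K}$ depends on both $H$ and $K$, but since $G$ is finite there are only finitely many pairs $(H,K)$ with $H\in\F(U_\bullet)$ and $K$ a minimal overgroup, so a single choice of $n$ making every $\mu(W)\geq k$ for $W\in\Irr(U_\bullet)$ handles all of them uniformly.
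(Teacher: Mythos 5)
Your proof is correct and rests on the same underlying mechanism as the paper's: the infinite-multiplicity hypothesis \ref{H:U} makes every multiplicity in the tail $U_{m\leq\bullet\leq n}$ grow without bound, and the strict drop $\dim W^H > \dim W^K$ at a witnessing irreducible $W \in \Irr(U_\bullet)$ gives the dimension gap in (3). The packaging differs slightly. The paper first isolates a \emph{single} finite sum $V$ of irreducibles from $\Irr(U_\bullet)$ with $\Iso_G(S(V)) = \F(U_\bullet)$ (using the join formula \eqref{E:isotropy of join} and finiteness of $G$), notes that $\dim V^H \geq 1$ and $\dim V^H > \dim V^K$ for every $H \in \F(U_\bullet)$ and $K \gneq H$, and then simply observes that $U_{m\leq\bullet\leq n}$ eventually contains $\oplus_{k+1}V$; all three items then follow in one stroke from additivity of fixed-point dimensions. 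You instead exhibit a separate witnessing irreducible $W_{H,K}$ for each pair and take $n$ large enough that every multiplicity $\mu(W)$, $W \in \Irr(U_\bullet)$, exceeds the required bound. This is logically equivalent because $\Irr(U_\bullet)$ and the set of relevant pairs $(H,K)$ are finite, but it takes a bit more bookkeeping, including attention to order of proof: your argument for (3) invokes (1) to produce a vector with stabilizer exactly $H$, so (1) must be secured for the fixed $n$ first, as you indeed arrange.
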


\begin{proof}
By construction of the join
\begin{equation}\label{E:isotropy of join}
\Iso_G(X_1* \cdots *X_n) = 
\bigcup_{\emptyset \neq I \subseteq \{1,\dots,n\}} \Iso_G(\prod_{i \in I} X_i) =
\bigcup_{\emptyset \neq I \subseteq \{1,\dots,n\}} \left\{ \bigcap_{i \in I} H_i : H_i \in \Iso_G(X_i) \right\}.
\end{equation}
Thus, $\Iso_G(S(U_{\leq 1})) \subseteq \Iso_G(S(U_{\leq 2})) \subseteq \cdots$.
Since $G$ is finite this sequence stabilizes on a collection $\F$.
It also follows that $\F=\Iso_G(S(V))$ for some representation with irreducible summands in $\Irr(U_\bullet)$.
Since hypothesis \ref{H:U} is in force, it is clear that $\F=\F(U_\bullet)$.

If $H \in \F$ then $\dim V^H \geq 1$ and moreover, if $K \gneq H$ then $\dim V^H > \dim V^K$.
Hypothesis \ref{H:U} implies that if $n$ is large enough then $U_{m \leq \bullet \leq n}$ contains $\oplus_{k+1} V$ and the lemma follows (inequality \ref{L:far in U:3} needs to be proven separately for $H \lneq G$ and $H=G$).
\end{proof}

\begin{proof}[Proof of Theorem \ref{T:linear spheres}]
{\em Proof of the stabilization:}
Fix $k \geq 0$.
By Lemma \ref{L:far in U} there exist integers $n_0 \geq m \geq 1$ such that $X \defeq S(U_{\leq m})$ and $Y' \defeq S(U_{m+1 \leq \bullet \leq n_0})$ satisfy
\[
\Iso_G(X)=\Iso_G(Y')=\F(U_\bullet)
\]
and for any $H \in \F(U_\bullet)$ 
\[
\dim X^H - \dim \bigcup_{K \gneq H} X^K \geq k+1 \qquad \text{and} \qquad
\dim Y'{}^H \geq k+1.
\]
Let $n \geq n_0$ and set $Y=S(U_{m+1 \leq \bullet \leq n})$ and $Z=S(U_{n+1})$.
Then, $S(U_{\leq n}) \cong X*Y$ and $S(U_{\leq n+1}) \cong X*Y*Z$ and the first statement of the theorem is that
\[
\map^G(X*Y,X*Y) \xto{f \mapsto f*\id_Z} \map^G(X*Y*Z, X*Y*Z)
\]
is a $k$-equivalence.
To prove this we apply Proposition \ref{P:stabilisation main result}.
First, $X,Y$ and $Z$ are finite $G$-CW complexes by Illman's result \cite{Illman78}.
Since $Y' \subseteq Y$, the choice of $m$ and $n_0$ guarantees that 
\[
\Iso(X)=\Iso(Y)=\Iso(X*Y*Z)=\F(U_\bullet)
\]
so hypothesis \ref{P:stab hypothesis 1 v2} of Proposition \ref{P:stabilisation main result} holds.

Hypothesis \ref{P:stab hypothesis 2 v2} also holds since $\dim Y^H \geq \dim Y'{}^H \geq k+1$ for all $H \in \F$, and our choice of $X$ satisfies hypothesis \ref{P:stab hypothesis 3 v2}.
Now, $(X*Y)^H \cong S(U_{\leq n})^H$ is itself a linear sphere of dimension $\dim X^H + \dim Y^H+1$, hence it is a $(\dim X^H + \dim Y^H)$-connected space.
Similarly $(X*Y*Z)^H$ is a linear sphere of dimension $\dim X^H+\dim (Y*Z)^H+1$ and hypothesis \ref{P:stab hypothesis 5 v2} also holds.

Now, $Z^H$ is a sphere and the join of a space $A$ with $S^m\cong S^0*\dots * S^0$ is homeomorphic to the $(m+1)$-fold unreduced suspension of $A$.
An iterated use of Proposition \ref{P:Freudenthal consequence new}\ref{P:Freudenthal b} 
shows that the map $F(X^H,(X*Y)^H) \to F(X^H*Z^H,(X*Y)^H*Z^H)$, where $F(-,-)$ denotes the space of (non-equivariant) continuous maps, is a $(\dim (X*Y)^H -1)$-equivalence.
Hypothesis \ref{P:stab hypothesis 4 v2} of Proposition \ref{P:stabilisation main result} holds since $\dim (X*Y)^H-1=\dim X^H+\dim Y^H \geq \dim X^H+k+1$.

\noindent
{\em Calculation of the limit groups:}
Given $n \geq 1$ write $U=U_{\leq n}$ and $X=S(U) \subseteq U \subseteq S^U$.
Lemma \ref{L:far in U} guarantees that if $n$ is large enough then $\Iso_G(X)=\F(U_\bullet)$ and  $\dim X^H \geq k+2$ and $\dim X^H - \dim \cup_{K \gneq H} X^K \geq k+2$ for all $H \in \F(U_\bullet)$.
We will show that $\pi_i \map^G(X, X)$ are isomorphic to the groups in the statement of the theorem for $0 \leq i \leq k$.

This follows from Proposition \ref{P:stable term X} which we proceed to check its hypotheses \ref{P:stable term X:1}--\ref{P:stable term X:4}.
Clearly, $\dim X = \dim U -1$ which is hypothesis \ref{P:stable term X:1}.
Hypothesis \ref{P:stable term X:2} holds by the choice of $n$, and \ref{P:stable term X:3} since $S^U \setminus S(U)$ is $G$-equivalent to $S^0$. 
For hypothesis \ref{P:stable term X:4} let $\eta_{S(U)} \colon S(U) \to S^U$ be the map in \ref{V:eta} followed by the homeomorphism $\Sigma S(U) \cong S^U$.
By Proposition \ref{P:Freudenthal consequence new}\ref{P:Freudenthal a} $(\eta_{S(U)})^H=\eta_{S(U^H)}$ is a $(2\dim S(U)^H -2)$-equivalence for any $H \in \Iso_G(S(U))$, and by the choice of $U$ we have $2\dim S(U)^H -2 \geq \dim S(U)^H +k$.
\end{proof}

The ``distinguished'' fixed points $0,\infty \in S^U$ correspond, under the homeomorphism $S^U \cong \Sigma S(U)$, to $\alpha_0,\alpha_1 \in \Sigma S(U)$  in \ref{V:eta}. 
Write $A=\{\alpha_1\}$ and $B=\{\alpha_0,\alpha_1\}$.
Let 
\[
\map^G(S^U,S^U;\id_B) \qquad  \text{and} \qquad \map^G(S^U,S^U;\pi_B^A)
\]
be the spaces of maps $f \colon S^U \to S^U$ such that $f|_B=\id_B$ and, respectively, $f|_B=\pi_B^A \colon B \to A$.
By inspection of \ref{V:eta} the following triangle commutes
\begin{equation}\label{E:trangle susp eta}
\xymatrix{
\map^G(S(U),S(U)) \ar[rr]^(0.44){\susp \colon f \mapsto \Sigma f} \ar[rrd]_{(\eta_{S(U)})_*} & &
\map^G(\Sigma S(U), \Sigma S(U); \id_B) \ar[d]^{\cong} 
\\
& &
\map^G(S(U),\PP_{\alpha_0,\alpha_1} \Sigma S(U)). 
}
\end{equation}

\begin{lemma}\label{L:susp and eta triangle}
Let $k \geq 0$.
With the notation above, if $\dim S(U)^H \geq k+2$ for every $H \in \Iso_G(S(U))$ then the diagonal arrow in \eqref{E:trangle susp eta} is a $k$-equivalence.
\end{lemma}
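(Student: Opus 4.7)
The plan is to apply Lemma \ref{L:k equivalence on fibres} directly with $B = \emptyset \subseteq A = S(U)$ and with the map of $G$-spaces $\eta_{S(U)} \colon S(U) \to \PP_{\alpha_0,\alpha_1}\Sigma S(U)$ from \ref{V:eta}. Since $B = \emptyset$, the space $\map^G(B, X)$ is a single point and the associated Serre fibration $i^* \colon \map^G(S(U), X) \to \map^G(\emptyset, X)$ has $\map^G(S(U), X)$ itself as its only fibre, so the induced map on fibres is precisely $(\eta_{S(U)})_*$.

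To verify the hypothesis of Lemma \ref{L:k equivalence on fibres}, fix $H \in \Iso_G(S(U))$ and set $n_H = \dim_H(S(U),\emptyset) = \dim S(U)^H$. From \ref{V:eta} the construction of $\eta$ commutes with taking fixed points, i.e.\ $(\eta_{S(U)})^H = \eta_{S(U)^H}$; moreover $S(U)^H = S(U^H)$ is a sphere of dimension $n_H$. Proposition \ref{P:Freudenthal consequence new}\ref{P:Freudenthal a} therefore tells us that
\[
(\eta_{S(U)})^H \colon S(U)^H \longrightarrow \PP_{\alpha_0,\alpha_1}\Sigma S(U)^H
\]
is a $(2n_H - 2)$-equivalence.

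By the standing assumption $n_H \geq k+2$, one has $2n_H - 2 \geq n_H + k$, so $(\eta_{S(U)})^H$ is in particular a $(k + n_H)$-equivalence. This is exactly the hypothesis required by Lemma \ref{L:k equivalence on fibres}, whose conclusion therefore gives that the map induced on fibres, which as noted above is $(\eta_{S(U)})_*$, is a $k$-equivalence. There is no real obstacle here: the only point of substance is the interaction $(\eta_X)^H = \eta_{X^H}$ recorded in \ref{V:eta}, which reduces the equivariant statement to an application of Freudenthal on each fixed-point sphere, after which the numerical inequality $2n_H - 2 \geq n_H + k$ is immediate from the dimension hypothesis.
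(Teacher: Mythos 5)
Your proposal is correct and follows exactly the same route as the paper: apply Lemma \ref{L:k equivalence on fibres} to $\emptyset \subseteq S(U)$ and the map $\eta_{S(U)}$, using Proposition \ref{P:Freudenthal consequence new}\ref{P:Freudenthal a} together with $(\eta_{S(U)})^H = \eta_{S(U^H)}$ to check the $(k+n_H)$-equivalence hypothesis via the inequality $2\dim S(U)^H - 2 \geq \dim S(U)^H + k$. Nothing is missing.
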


\begin{proof}
By Proposition \ref{P:Freudenthal consequence new}\ref{P:Freudenthal a} $(\eta_{S(U)})^H=\eta_{S(U^H)}$ is a $(2\dim S(U)^H -2)$-equivalence for any $H \in \Iso_G(S(U))$.
The result follows by applying  Lemma \ref{L:k equivalence on fibres} with $\emptyset \subseteq S(U)$  since $2\dim S(U)^H -2 \geq \dim S(U)^H +k$.
\end{proof}

\begin{proof}[Proof of Proposition \ref{P:map S(U) to S^U}]
Combine Lemma \ref{L:susp and eta triangle} with parts \ref{L:far in U:1} and \ref{L:far in U:2} of Lemma \ref{L:far in U}.
\end{proof}

There is a continuous function 
\[
S^U \times S^V \xto{(u,v) \mapsto u+v} S^{U+V} \qquad (\text{where } u+\infty=\infty+v=\infty).
\] 
It gives rise to a homeomorphism $S^U \wedge S^V \cong S^{U+V}$.
Moreover, $(-) \wedge \alpha_0$ carries $B \subseteq S^U$ to $B\subseteq S^{U+V}$.

Under the homeomorphism $\Sigma S(U) \cong S^U$  the map $\susp$ in the square below has the form $\susp(f)(u) = |u| \cdot f(u/|u|)$ and $J_{S(V)}(f)(u+v) = |u|\cdot f(u/|u|)+v$ and $(f \wedge S^V)(u+v) = f(u)+v$.
The square commutes by inspection.
\begin{equation}\label{E:join S(V) vs wedge S^V}
\xymatrix{
\map^G(S(U),S(U)) \ar[d]_{J_{S(V)} \colon f \mapsto f*S(V)} \ar[r]^{\susp} &
\map^G(S^U,S^U;\id_B) \ar[d]^{f \mapsto f \wedge S^V} 
\\
\map^G(S(U+V),S(U+V)) \ar[r]_{\susp} & 
\map^G(S^{U+V},S^{U+V};\id_B).
}
\end{equation}

\begin{proof}[Proof of Theorem \ref{T:tom Dieck extended}]
Given $n \geq 1$ we will write $U=U_{\leq n}$ and $V=U_{n+1}$.
Lemma \ref{L:susp and eta triangle} and Theorem \ref{T:linear spheres} imply that if $n$ is large enough then the arrows $\susp$ and $J_{S(V)}$ in \eqref{E:join S(V) vs wedge S^V} are $k$-equivalences, and therefore the arrow $(-)\wedge S^V$ on the right is a $k$-equivalence.

The inclusions $A \subseteq B \subseteq S^U$ 
give rise to the following fibrations, with the inclusion maps as basepoints,
\[
\xymatrix@1{
\map^G(S^U,S^U) \ar@{->>}[r] & 
\map^G(B,S^U) \ar@{->>}[r] &
\map^G(A, S^U)
}
\]
and similar ones for $S^{U+V}$.
The fibres of fibrations  $\xymatrix@1{X_2 \ar@{->>}[r] & X_1 \ar@{->>}[r] & X_0}$ with basepoints $x_0,x_1,x_2$ fit into a fibre sequence $\xymatrix{F_{12} \ar@{^(->}[r] & F_{02} \ar@{->>}[r] & F_{01}}$.
We obtain a commutative diagram
\[
\xymatrix{
\map^G(S^U,S^U;\id_B) \ar@{^(->}[r] \ar[d]_{(-) \wedge S^V} &
\map^G_*(S^U,S^U) \ar@{->>}[r]^{\ev_{\alpha_0}} \ar[d]_{(-) \wedge S^V} & 
(S^U)^G \ar[d]_{(-) \wedge S^V}
\\
\map^G(S^{U+V},S^{U+V};\id_B) \ar@{^(->}[r] &
\map^G_*(S^{U+V},S^{U+V}) \ar@{->>}[r]_(0.6){\ev_{\alpha_0}} &
(S^{U+V})^G.
}
\]
Assume first that $\Irr(U_\bullet)$ contains the trivial representation.
By Lemma \ref{L:far in U}, if $n$ is large enough then $G \in \F(U_\bullet)=\Iso_G(U)$ and $\dim (S^U)^G \geq  k+2$.
Therefore the inclusion of the fibres in the square above are $k$-equivalences, hence $(-)\wedge S^V$ in the 2nd column are ones too.
Together with Theorem \ref{T:linear spheres} and since $\susp$ in \eqref{E:join S(V) vs wedge S^V} is a $k$-equivalence, the result follows.

If $\Irr(U_\bullet)$ does not contain the trivial representation then $G \notin \F(U_\bullet)$ and therefore $(S^U)^G \cong (S^{U+V})^G=\{\alpha_0,\alpha_1\}$.
Then $\map^G_*(S^U,S^U)$ is the disjoint union of the fibres of $\ev_{\alpha_0}$ over $\alpha_0$ and over $\alpha_1$, i.e $\map^G(S^U,S^U;\id_B)$ and $\map^G(S^U,S^U;\pi_B^A)$ respectively.
Similarly  $\map^G_*(S^{U+V},S^{U+V})$ has such decomposition.
We have seen that $(-)\wedge S^V$ induces a $k$-equivalence on the components over $\id_B$.
It induces a $k$-equivalence on the components over $\pi_B^A$ by Lemma \ref{L:tau wedge S^V homotopy commutative} below.
The result follows from Theorem \ref{T:linear spheres} and since $\susp$ in \eqref{E:join S(V) vs wedge S^V} is a $k$-equivalence.
\end{proof}

Consider $G$-spaces $(X;x_0,x_1)$ with distinguished points $x_0,x_1 \in X^G$.
Examples are given by $(\Sigma X;\alpha_0,\alpha_1)$, see \ref{V:eta}.  
Given $(X;x_0,x_1)$ and $(Y;y_0,y_1)$ let 
\[
X \curlyvee Y  \ \defeq \  \left(X \coprod Y\right) \,/\,x_1 \sim y_0.
\]
Let $[t,x]$ denote the equivalence classes of the points of $\Sigma X$. 
There is a pinch map $\Sigma X \to \Sigma X \curlyvee \Sigma X$ where $[t,x] \mapsto [2t,x]$ if $0 \leq t \leq \half$ and $[t,x] \mapsto [2t-1,x]$ if $\half \leq t \leq 1$.
For any $G$-space $Z$ there results
\[
\map^G(\Sigma X \curlyvee \Sigma X,Z) \xto{\pinch^*} \map^G(\Sigma X,Z).
\]
The space on the left is identified with the space of pairs $(f,g)$ of maps $\Sigma X \to Z$ such that $f(\alpha_1)=g(\alpha_0)$.
We will denote $\pinch^*(f,g)$ by $f+g$.

Let $\inv \colon \Sigma X \to \Sigma X$ be the map $[t,x] \mapsto [1-t,x]$.
Let $A \subseteq B \subseteq \Sigma X$ denote $\{\alpha_1\}$ and $\{\alpha_0\,\alpha_1\}$ respectively.
Let $\map^G(\Sigma X,\Sigma X;\id_B)$ be the space of maps with $f|_B = \id_B$.
Similarly $\map^G(\Sigma X,\Sigma X;\pi_B^A)$ is the space of maps with $f|_B=\pi_B^A \colon B \to A$.
Define maps
\begin{eqnarray}
&& \sigma \colon \map^G(\Sigma X,\Sigma X;\id_B) \xto{f \mapsto \inv+f} \map^G(\Sigma X,\Sigma X;\pi_B^A) 
\label{D:tau and sigma equivalences}
\\
\nonumber
&& \tau \colon \map^G(\Sigma X,\Sigma X;\pi_B^A) \xto{f \mapsto \id_{\Sigma X}+f} \map^G(\Sigma X,\Sigma X;\id_B) 
\end{eqnarray}

\begin{lemma}\label{L:tau sigma he}
The maps $\tau$ and $\sigma$ in \eqref{D:tau and sigma equivalences} are equivariant homotopy equivalences.
\end{lemma}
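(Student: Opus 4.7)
The strategy will be to identify, via the adjunction of \ref{V:eta} applied to $G$-fixed points, both mapping spaces with path spaces in $Y := \map^G(X, \Sigma X)$, and then to recognize $\sigma$ and $\tau$ as left-concatenation with a fixed path. Once this is done the lemma reduces to standard path algebra. Write $c_0, c_1 \in Y$ for the constant $G$-maps at $\alpha_0$ and $\alpha_1$. Applying the $G$-natural homeomorphism $F(\Sigma X, \Sigma X; y_0, y_1) \cong F(X, \PP_{y_0, y_1} \Sigma X)$ from \ref{V:eta} to the two relevant choices of endpoints yields
\[
\map^G(\Sigma X, \Sigma X; \id_B) \cong \PP_{c_0, c_1} Y \quad \text{and} \quad \map^G(\Sigma X, \Sigma X; \pi_B^A) \cong \PP_{c_1, c_1} Y,
\]
under which the pinch-sum operation $f + g$ corresponds to ordinary concatenation of paths in $Y$.

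Next I will exhibit $\sigma$ and $\tau$ as concatenation with fixed paths. Let $\gamma \colon I \to Y$ be given by $\gamma(s)(x) = [s, x]$; this is the adjoint of $\id_{\Sigma X}$ (that is, $\gamma = \eta_X$ viewed as a path in $Y$) and it travels from $c_0$ to $c_1$. Its reversal $\bar\gamma$, a path from $c_1$ to $c_0$, is the adjoint of $\inv$. With these identifications, $\sigma$ is just the map $f \mapsto \bar\gamma \cdot f$ and $\tau$ is the map $g \mapsto \gamma \cdot g$, where $\cdot$ denotes path concatenation.

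The proof then concludes by path algebra. The classical homotopies --- associativity of concatenation, the contractions of $\gamma \cdot \bar\gamma$ and $\bar\gamma \cdot \gamma$ to the constant loops at $c_0$ and $c_1$ respectively (both obtained by progressively shrinking the loop toward its basepoint), and the unit law $c_i \cdot h \simeq h$ --- yield, continuously in $f$ and $g$,
\[
\tau \sigma(f) \simeq (\gamma \cdot \bar\gamma) \cdot f \simeq c_0 \cdot f \simeq f \quad \text{and} \quad \sigma \tau(g) \simeq (\bar\gamma \cdot \gamma) \cdot g \simeq c_1 \cdot g \simeq g.
\]
No serious obstacle arises: $G$-equivariance has been fully absorbed into the definition of $Y$, so the only bookkeeping is verifying that the standard null-homotopies of $\gamma \cdot \bar\gamma$ and $\bar\gamma \cdot \gamma$ keep the endpoints fixed at $c_0$ and $c_1$ throughout the deformation, which is automatic from the shrinking construction.
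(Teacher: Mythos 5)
Your proof is correct and, after unwinding the adjunction, it is essentially the paper's argument: the paper's explicit $G$-homotopies (shrinking $\id + \inv$ and $\inv + \id$ to constant maps, associativity of the pinch, and absorbing constants) are exactly the null-homotopies of $\gamma\cdot\bar\gamma$ and $\bar\gamma\cdot\gamma$, the associativity of concatenation, and the unit laws that you invoke in the path space $Y = \map^G(X,\Sigma X)$. The adjunction reformulation is a clean way to package the same homotopies and makes the continuity/naturality in $f$ manifest, but it does not constitute a different route.
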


\begin{proof}
First, $(\id+\inv) \colon \Sigma X \to \Sigma X$ is equivariantly homotopic to the constant map $\alpha_0$ (via the homotopy $h_s([t,x])=[2ts,x]$ if $0 \leq t \leq \frac{1}{2}$ and $h_s([t,x])=[s(2-2t),x]$ if $\frac{1}{2} \leq t \leq 1$).
Similarly $(\inv+\id)$ is homotopic to the constant map $\alpha_1$.

Given $f \colon \Sigma X \to \Sigma X$ such that $f(\alpha_0)=\alpha_0$, there is a natural homotopy $\alpha_0+f \simeq f$.
Similarly, if $f(\alpha_0)=\alpha_1$ there is a natural homotopy $\alpha_1+f \simeq f$.
 
There is a natural homotopy between the maps
\begin{eqnarray*}
&& \Sigma X \xto{\pinch} \Sigma X \curlyvee \Sigma X \xto{\Sigma X \curlyvee \pinch} \Sigma X \curlyvee \Sigma X \curlyvee \Sigma X  \text{ and } \\
&& \Sigma X \xto{\pinch} \Sigma X \curlyvee \Sigma X \xto{\pinch \curlyvee \Sigma X} \Sigma X \curlyvee \Sigma X \curlyvee \Sigma X.
\end{eqnarray*}
Thus, there are homotopies $\tau(\sigma(f))=\id+(\inv+f) \simeq (\id+\inv)+f \simeq \alpha_0 + f \simeq f$ and $\sigma(\tau(f)) = \inv+(\id+f) \simeq (\inv+\id)+f \simeq \alpha_1+f \simeq f$ natural in $f$.
\end{proof}

Neighbourhoods of $\infty \in S^U$ are the open balls $\{u\in U : |u|>R\} \cup \{\infty\}$.

\begin{lemma}\label{L:tau wedge S^V homotopy commutative}
Let $U, V$ be (orthogonal) representations of $G$.
There is a homotopy commutative square in which the horizontal maps are homotopy equivalences
\[
\xymatrix{
\map^G(S^U,S^U;\pi_B^A) \ar[r]^{\tau} \ar[d]_{(-)\wedge S^V} &
\map^G(S^U,S^U;\id_B) \ar[d]^{(-)\wedge S^V}
\\
\map^G(S^{U+V},S^{U+V};\pi_B^A) \ar[r]^\tau &
\map^G(S^{U+V},S^{U+V};\id_B).
}
\]
\end{lemma}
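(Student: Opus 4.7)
The plan has two parts: show the horizontal arrows are $G$-homotopy equivalences, and show the square homotopy-commutes. For the first, Lemma \ref{L:tau sigma he} applied with $X=S(U)$ (using the identification $\Sigma S(U) \cong S^U$) yields the top arrow as a $G$-homotopy equivalence, and the analogous application with $X=S(U+V)$ (using $\Sigma S(U+V)\cong S^{U+V}$) yields the bottom arrow.

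For the homotopy commutativity, I would first unwind both compositions. Since $\tau(g) = (\id,g)\circ \pinch$ by construction, functoriality of $\wedge$ gives
\[
\tau(f) \wedge \id_{S^V} \;=\; \bigl((\id_{S^U},f) \wedge \id_{S^V}\bigr) \circ \bigl(\pinch_{U} \wedge \id_{S^V}\bigr),
\]
where $\pinch_U \colon S^U \to S^U \curlyvee S^U$ is the pinch associated with $S^U = \Sigma S(U)$; while
\[
\tau(f \wedge \id_{S^V}) \;=\; \bigl(\id_{S^{U+V}},\, f \wedge \id_{S^V}\bigr) \circ \pinch_{U+V},
\]
where $\pinch_{U+V}$ comes from $S^{U+V}=\Sigma S(U+V)$. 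Thus the two maps $S^{U+V}\to S^{U+V}$ to be compared differ only in which natural suspension coordinate of $S^{U+V}$ is doubled; the ``second halves'' $(\id_{S^U},f)\wedge\id_{S^V}$ and $(\id_{S^{U+V}},\, f\wedge\id_{S^V})$ match on the relevant image because the basepoint condition $f(\alpha_0)=\alpha_1$ ensures the gluing condition for the pinch construction is preserved under smashing with $S^V$.

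The required homotopy is then produced by a direct $G$-equivariant interpolation. Writing $S^{U+V}$ as the one-point compactification of $U\oplus V$, both $\tau(f)\wedge\id_{S^V}$ and $\tau(f\wedge\id_{S^V})$ admit explicit descriptions in terms of rescaling the $U$-coordinate (respectively, the full radial coordinate) of $(u,v)\in U\oplus V$ and then applying $f$ to the appropriate factor; since the rescaling functions depend only on the $G$-invariant norms $|u|$ and $|(u,v)|$, a convex interpolation between them is $G$-equivariant, continuous, based, and extends over $\infty$. Composing the resulting family of ``interpolated pinch'' maps with $(\id_{S^{U+V}},\, f\wedge\id_{S^V})$ yields the desired $G$-homotopy from $\tau(f)\wedge\id_{S^V}$ to $\tau(f\wedge\id_{S^V})$.

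The main obstacle is bookkeeping: the wedge targets $(S^U\curlyvee S^U)\wedge S^V$ and $S^{U+V}\curlyvee S^{U+V}$ are not homeomorphic (the subspace $\{0_U\}\times S^V$ of the latter collapses to a single point in the former), so the homotopy cannot be constructed between the pinch maps themselves and must instead be carried out after post-composition with $(\id,f\wedge\id_{S^V})$. Once this is accepted, the interpolation formulas and the verification of $G$-equivariance are routine geometric calculations in $U\oplus V$.
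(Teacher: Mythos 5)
Your strategy aligns with the paper's at the top level: you correctly reduce the horizontal equivalences to Lemma \ref{L:tau sigma he} via $\Sigma S(U)\cong S^U$, you correctly note that the intermediate wedges $(S^U\curlyvee S^U)\wedge S^V$ and $S^{U+V}\curlyvee S^{U+V}$ differ (so one cannot compare the pinch maps directly), and you correctly conclude that the homotopy must be built from $S^{U+V}$ to $S^{U+V}$ after post-composing with $(\id,f\wedge\id_{S^V})$. The gap is in the claim that the rest is a ``routine geometric calculation'' via ``a convex interpolation between the rescaling functions.'' That is precisely where the lemma's content lies, and the naive version of the idea does not obviously produce a continuous map of one-point compactifications. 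The paper's homotopy does not convexly interpolate $|u|$ and $|u+v|$; it pinches at the radius $|u+tv|$, and in the inner region sends $u+v$ to $\vp(|u+tv|)(u+tv)+(1-t)v$. The additive $(1-t)v$ term is not a rescaling of $u+v$ at all, but is what forces the image to tend to $\infty$ along sequences with $t\to 0$, $|u|$ bounded and $|v|\to\infty$ (which stay in the inner region since $|u+tv|\to|u|$); without such a device continuity fails at $(0,\infty)$ in the parameter space. Likewise in the outer region one must feed $f$ a single, continuously varying argument, not a convex combination of two outputs of $f$, and the coefficient $(1-t/|u+tv|)$ on $v$ is again not a convex combination.

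In short, you have identified the correct shape of the argument but omitted the step that carries the proof: writing down a specific formula for the homotopy, checking that the two closed regions on which its branches are defined really cover $I\times S^{U+V}$ and are closed (this already requires showing the extension of $(t,u+v)\mapsto|u+tv|$ by $\infty$ for $t>0$ is continuous), and estimating $|h(t,f,u+v)|$ near $\infty$ on each piece. The paper's Claims 1--3 do exactly this and constitute the bulk of its proof.
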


\begin{proof}
Since $\Sigma S(U) \cong S^U$ the maps $\tau$ 
in \eqref{D:tau and sigma equivalences} are
homotopy equivalences by Lemma \ref{L:tau sigma he}.

Fix once and for all a homeomorphism $\vp \colon [0,1] \to [0,\infty]$ with $\vp(0)=0$.
Let $U \oplus V$ be the orthogonal sum (thus, $|u+v|=|u|+|v|$).
We model the pinch map $S^U \to S^U \curlyvee S^U$ by
\[
\pinch(u) = \left\{
\begin{array}{ll}
\vp(|u|)u       & \text{if } |u| \leq 1 \text{      (note: $\infty \cdot u=\infty$ for $|u| =1$)} \\
(1-\frac{1}{|u|})u  & \text{if } |u| \geq 1 \text{ or } u=\infty.
\end{array}\right.
\] 
Define 
$h \colon I \times \map^G(S^U,S^U;\pi_B^A) \times S^{U+V} \to S^{U+V}$
where $A=\{\infty\}$ and $B=\{0,\infty\} \subseteq S^U$:
\[
h(t,f,u+v) = 
\left\{
\begin{array}{ll}
\vp(|u+tv|)(u+tv)+(1-t)v & \text{if }  u+v \neq \infty \text{ and } |u+tv| \leq 1 \\
f((1-\frac{1}{|u+tv|})u)+(1-\frac{t}{|u+tv|})v   & \text{if }  u+v \neq \infty \text{ and } |u+tv| \geq 1 \\
\infty & \text{if } u+v=\infty
\end{array}\right.
\]
This is well defined because if $|u+tv|=1$ then $\infty \cdot (u+tv)=\infty$ and $f(0)=\pi_B^A(0)=\infty$.
In what follows we will show that $h$ is continuous.
Once this is established, the adjoint of $h$ gives a map $H \colon I \times \map^G(S^U,S^U;\pi_B^A) \to \map^G(S^{U+V},S^{U+V};\id_B)$ since $h(t,f,0)=0$ and $h(t,f,\infty)=\infty$.
It is a homotopy from $(-) \wedge S^V \circ \tau$ to $\tau \circ (-) \wedge S^V$ in the square above, which completes the proof.
To show $h$ is continuous we apply the pasting lemma to the following subsets of $I \times \map^G(S^U,S^U;\pi_B^A) \times S^{U+V}$
\begin{eqnarray*}
&& D=\{ (t,f,u+v) : \text{ either } u+v \neq \infty \text{ and } |u+tv| \leq 1 \text{ or } t=0 \text{ and } u+v=\infty \} \\
&& E=\{(t,f,u+v) : \text{ either } u+v\neq\infty \text{ and } |u+tv| \geq 1 \text{ or } u+v=\infty \}.
\end{eqnarray*}

\noindent
{\em Claim 1:} $D$ and $E$ are is a closed subsets of $I \times \map^G(S^U,S^U;\pi_B^A) \times S^{U+V}$.

\noindent
Proof: 
We replace $D$ and $E$ with their images in $I \times S^{U+V}$ under the projection.
The complement of $E$ is $\{ (t,u+v) : u+v \neq \infty \text{ and } |u+tv|<1\}$ which is clearly open.
The complement of $D$ is the preimage of $(1,\infty]$ under the map $\lambda \colon I \times S^{U+V} \setminus \{(0,\infty)\} \xto{(t,u+v) \mapsto |u+tv|} [0,\infty]$ where $\lambda(t,\infty)=\infty$.
It is clearly continuous at any $(t_0,u_0+v_0)$ with $u_0+v_0 \neq \infty$;
It is continuous at points $(t_0,\infty)$ with $t_0>0$ since given $R>0$, whenever $(t,u+v) \in I \times S^{U+V} \setminus \{(0,\infty)\}$ is such that $t>\frac{t_0}{2}$ and $|u+v|>R+\frac{2R}{t_0}$ we have either $|u|>R$ in which case $|\lambda(t,u+v)|\geq |u|>R$ or $|v|>\frac{2R}{t_0}$ so $|\lambda(t,u+v)| \geq t|v| > R$. 

\noindent
Claim 2: $h$ is continuous on $D$.

\noindent
Proof: 
We may replace $D$ with its projection $D'$ in $I \times S^{U+V}$ and prove that $\lambda \colon D' \to S^{U+V}$ defined by $\lambda(t,u+v)=\vp(|u+tv|)u + (t\vp(|u+tv|)+1-t)v$ if $u+v \neq \infty$ and $\lambda(0,\infty)=\infty$, is continuous.
Continuity is clear away from $(0,\infty) \in D'$.
Continuity at $(0,\infty)$ will follow once we show that given $R>1$, if $(t,u+v) \in D'$ is such that $|u+v|>R+2$ then $|\lambda(t,u+v)|>R$.
Indeed,  $|u+tv| \leq 1$ implies $|u|, |tv| \leq 1$, hence $|v|> R+1$ and $t\leq\frac{1}{|v|}$, and it follows that $|\lambda(t,u+v)| \geq (t\vp(|u+tv|)+1-t)|v| \geq (1-t)|v| \geq |v|-1 >R$.

\noindent
Claim 3: $h$ is continuous on $E$.

\noindent
Proof:
Let $E'$ be the projection of $E$ in $I \times S^{U+V}$.
Define $\lambda \colon E' \to S^{U+V}$ by $\lambda (t,u+v) (1-\frac{1}{|u+tv|})u+(1-\frac{t}{|u+tv|})v$ if $u+v \neq \infty$ and $\lambda(t,\infty)=\infty$.
Then $h|_E(t,f,u+v)=(f\wedge S^V)(\lambda(t,u+v))$, and since $f \mapsto f \wedge S^V$ and the evaluation map are continuous, it remains to show that $\lambda$ is continuous.
This is clear away from the points $(t,\infty)$.
Continuity of $\lambda$ at points $(t_0,\infty) \in E'$ would follow once we show that given $R>1$, if $(t,u+v) \in E'$ is such that $|u+v|>R+2$, then $|\lambda(t,u+v)|>R$.
Indeed, $|u+tv| \geq |u|, t|v|$ and therefore $(1-\frac{1}{|u+tv|})|u| \geq |u|-1$ and $(1-\frac{t}{|u+tv|})|v| \geq |v|-1$ (take special care when $u=0$ or $v=0$ or $t=0$).
It follows that if $|u+v|>R+2$ then  $|\lambda(t,u+v)| \geq |u|-1+|v|-1 > R$.
\end{proof}

\bibliography{bibliography}{}
\bibliographystyle{plain}

\end{document}